\theoremstyle{definition}
\newtheorem{defn}{Definition}[section]
\newtheorem{conj}{Conjecture}[section]
\newtheorem{ques}{Question}[section]
\newtheorem{lem}{Lemma}[section]
\newtheorem{cor}{Corollary}[lem]
\newtheorem{thm}{Theorem}[section]
\newcommand{\set}[1]{\{#1\}}
\renewcommand{\leq}{\le} 
\newcommand{\ol}{\overline}
\newcommand{\NN}{\mathbb{N}}
\newcommand{\NNZ}{\NN_0}
\newcommand{\ZZ}{\mathbb{Z}}
\newcommand{\RR}{\mathbb{R}}
\providecommand{\abs}[1]{\lvert #1 \rvert}
\newcommand{\ceil}[1]{\lceil #1 \rceil}
\newcommand{\OKlibrary}{\texttt{OKlibrary}}
\newcommand{\OKinternet}{\url{http://www.ok-sat-library.org}}
\newcommand{\tawsolver}{\texttt{tawSolver}}
\newcommand{\otawsolver}{\tawsolver-1.0}
\newcommand{\ntawsolver}{\tawsolver-2.6}
\newcommand{\ttawsolver}{$\tau$\texttt{awSolver}-2.6}
\newcommand{\oksolver}{\texttt{OKsolver}}
\newcommand{\satz}{\texttt{satz}}
\newcommand{\march}{\texttt{march\_pl}}
\newcommand{\minisat}{\texttt{MiniSat}}
\newcommand{\cryptomini}{\texttt{CryptoMiniSat}}
\newcommand{\glucose}{\texttt{Glucose}}
\newcommand{\picosat}{\texttt{PicoSAT}}
\newcommand{\precosat}{\texttt{PrecoSAT}}
\newcommand{\lingeling}{\texttt{Lingeling}}
\newcommand{\cubeconq}{\texttt{Cube\:\&\:Conquer}}
\DeclareMathOperator{\epos}{EP1S}
\DeclareMathOperator{\np}{\mathit{n}_p}
\DeclareMathOperator{\nv}{\mathit{n}_v}
\DeclareMathOperator{\vdw}{w}
\DeclareMathOperator{\vdwpd}{pdw}
\DeclareMathOperator{\pdg}{pdg}
\DeclareMathOperator{\pds}{pds}
\DeclareMathOperator{\arithp}{ap}
\DeclareMathOperator{\pdarithp}{pdap}
\DeclareMathOperator{\Fpd}{\mathit{F}^{pd}}
\DeclareMathOperator{\mir}{m}
\DeclareMathOperator{\ldeg}{ld} 
\journal{}
\begin{document}

\begin{frontmatter}
\title{On the van der Waerden numbers $\vdw(2;3,t)$}
\author[ahmed_address]{Tanbir Ahmed}
\address[ahmed_address]{Department of Computer Science and Software Engineering,
Concordia University, Montr\'eal, Canada. \\
ta\_ahmed@cs.concordia.ca}

\author[kullmann_address]{Oliver Kullmann}
\address[kullmann_address]{Computer Science Department,
Swansea University, Swansea, UK. \\
O.Kullmann@Swansea.ac.uk}

\author[snevily_address]{Hunter Snevily\fnref{fn1}}
\address[snevily_address]{Department of Mathematics,
University of Idaho - Moscow, Idaho, USA.}
\fntext[fn1]{Hunter Snevily passed away on November 11, 2013 after his long struggle with
Parkinson’s disease. He was an inspiring mathematician. 
We have lost a great friend and colleague. He will be heavily missed
and fondly remembered}

\begin{abstract}
In this paper
we present results and conjectures on the ordinary \href{http://en.wikipedia.org/wiki/Van_der_Waerden_number}{van der Waerden numbers} $\vdw(2; 3, t)$ and on the new \emph{palindromic van der Waerden numbers} $\vdwpd(2; 3, t)$.
We have computed the exact value of the previously unknown number $\vdw(2; 3, 19) = 349$, and we provide new lower bounds for $20 \leq t \leq 39$, where for $20 \leq t \leq 30$ we conjecture these bounds to be exact.
The lower bounds for $\vdw(2; 3, t)$ with $24 \leq t \leq 30$ refute the conjecture that $\vdw(2; 3, t)\leq t^2$ as suggested in \cite{blr2008}.
Based on the known values of $\vdw(2; 3, t)$, we investigate regularities to better understand the lower bounds of $\vdw(2; 3, t)$.
Motivated by such regularities, we introduce palindromic van der Waerden numbers $\vdwpd(k; t_0,\dots,t_{k-1})$, which are defined as the ordinary numbers $\vdw(k;t_0,\dots,t_{k-1})$, but where only palindromic solutions are considered, reading the same from both ends. Different from the situation for ordinary van der Waerden numbers, these ``numbers'' need actually to be pairs of numbers.
We compute $\vdwpd(2;3,t)$ for $3 \leq t \leq 27$, and we provide bounds for $t \leq 39$, which we believe to be exact for $t \leq 35$.
All computations are based on SAT solving, and we discuss the various relations between SAT solving and Ramsey theory. Especially we introduce a novel (open-source) SAT solver, the \tawsolver, which performs best on the SAT instances studied here, and which is actually the original DLL-solver (\cite{dpll}), but with an efficient implementation and a modern heuristic typical for look-ahead solvers (applying the theory developed in \cite{Kullmann2007HandbuchTau}).
\end{abstract}

\end{frontmatter}

\tableofcontents

\section{Introduction}
\label{sec:intro}

We consider \href{http://en.wikipedia.org/wiki/Ramsey_theory}{Ramsey theory} and its connections to computer science (see \cite{Rosta2004RamseySurvey}
for a survey)
by exploring a rather recent link, especially to algorithms and formal methods, namely to \href{http://en.wikipedia.org/wiki/Boolean_satisfiability_problem}{SAT solving}.
SAT is the problem of finding a satisfying assignment for a propositional formula.
Since Ramsey problems can naturally be formulated as SAT problems,
SAT solvers can be used to compute numbers from Ramsey theory.
In the present article, we consider van der Waerden numbers (\cite{vanderWaerden1927Baudet}),
where SAT had its biggest success in Ramsey theory, namely the determination of $\vdw(2; 6,6) = 1132$ in \cite{KourilPaulW26}, the first new diagonal van der Waerden (short ``vdW'') number after almost 30 years.

\begin{defn}\label{def:vdwn}
  We use $\NN = \set{x \in \ZZ : x \ge 1}$, $\NNZ = \NN \cup \set{0}$. An \emph{arithmetic progression} of length $t \in \NN$ is a subset $p \subset \NN$ of length $\abs{p} = t$ and of the form $p = \set{a + i \cdot d : i \in \set{0, \dots, t-1}}$ for some $a, d \in \NN$. A \emph{block partition} of length $k \in \NN$ of a set $X$ is a tuple $(P_0,\dots, P_{k-1})$ of length $k$ of subsets of $X$ (possibly empty) which are pairwise disjoint ($P_i \cap P_j = \emptyset$ for $i \ne j$) and with $P_0 \cup \dots \cup P_{k-1} = X$. The \emph{van der Waerden number} $\vdw(k; t_0, t_1, \dots, t_{k-1}) \in \NN$ for $k, t_0, \dots, t_{k-1} \in \NN$ is the smallest $n \in \NN$ such that for any block partition $(P_0,\dots,P_{k-1})$ of length $k$ of $\set{1,\dots,n}$ there exists a $j \in \set{0,\dots,k-1}$ such that $P_j$ contains an arithmetic progression of length $t_j$.
\end{defn}
That we have $\vdw(k; t_0, t_1, \ldots, t_{k-1}) > n$ can be certified by an appropriate block partition of $\set{1,\dots,n}$; such partitions are the solutions of the SAT problems to be constructed, and we call them ``good partitions'':
\begin{defn}\label{def:goodpart}
  A \emph{good partition} of $\set{1,\dots,n}$ (where $n \in \NNZ$) w.r.t.\ parameters $t_0, t_1, \ldots, t_{k-1}$ is a block partition $(P_0,\dots,P_{k-1})$ of $\set{1,\dots,n}$ containing no block $P_j$ with an arithmetic progression of length $t_j$ (for any $j$).
\end{defn}
So there exists a good partition of $\set{1,\dots,n}$ if and only if $n < \vdw(k; t_0, t_1, \dots, t_{k-1})$. For every $k, t_0, \dots, t_{k-1} \in \NN$ the only block partition of $\set{1,\dots,0} = \emptyset$ is $(\emptyset,\dots,\emptyset)$, and this is a good partition. In this paper, we are interested in the specific van der Waerden numbers $\vdw(2; 3, t)$, $t\geq 3$. Specialising the general definition we obtain:
\begin{center}
  $\vdw(2;3,t)$ is the smallest $n \in \NN$, such that\\
  for all $P_0, P_1 \subseteq \set{1,\dots,n}$ with $P_0 \cap P_1 = \emptyset$ and $P_0 \cup P_1 = \set{1,\dots,n}$\\
  either $P_0$ has an arithmetic progression of size $3$ or $P_1$ has an arithmetic progression of size $t$, or both.
\end{center}
The known exact values of $\vdw(2; 3, t)$ are shown in Table \ref{tab:known3t} (with our contribution in bold).
\begin{table}[H]
  \setlength{\tabcolsep}{5pt}
  \centering
  \begin{tabular}{c|*{17}{c}}
    $t$ & 3 & 4 & 5 & 6 & 7 & 8 & 9 & 10 & 11 & 12 & 13 & 14 & 15 & 16 & 17 & 18 & \textbf{19}\\
    \hline
    $\vdw(2;3,t)$ & 9 & 18 & 22 & 32 & 46 & 58 & 77 & 97 & 114 & 135 & 160 & 186 & 218 & 238 & 279 & 312 & \textbf{349}
  \end{tabular}
  \caption{Known values for $\vdw(2;3,t)$}
  \label{tab:known3t}
\end{table}

As references and for relevant information on the above numbers,
see Chv\'atal \cite{chvatal1970}, Brown \cite{brown1974},
Beeler and O'Neil \cite{beelneil1979}, Kouril \cite{KourilPaulW26},
Landman, Robertson and Culver \cite{lrc2005}, and Ahmed \cite{ahmed2009, ahmed2010, ahmed2011, ahmed2013}.\footnote{This sequence is \url{http://oeis.org/A007783} in the ``On-Line Encyclopedia of Integer Sequences''.}
Recently, Kullmann \cite{kullmann0}\footnote{the conference article \cite{kullmann} contains only material related to Green-Tao numbers and SAT}
reported the following lower bounds
$$\vdw(2; 3, 19)\geq 349, \vdw(2; 3, 20)\geq 389, \vdw(2;3, 21)\geq 416.$$

We confirm the exact value of $\vdw(2; 3, 19) = 349$, and we extend
the list of lower bounds up to $t=39$. Brown, Landman, and Robertson \cite{blr2008},
showed the lower bound $\vdw(2; 3, t) > t^{2- 1 / \log \log t}$ for $t \geq 4 \cdot 10^{316}$, and
observed that $\vdw(2; 3, t)\leq t^2$ for $5\leq t\leq 16$, suggesting that this might hold for all $t$.
Our lower bounds in Subsection \ref{sec:Somenewconjectures} however prove that there are $t$ with  $\vdw(2; 3, t) > t^2$. We provide an improved upper bound $1.675 t^2$ in Subsection \ref{sec:conjupb} (satisfying all known values and lower bounds of $\vdw(2; 3, t)$).

We also present a new type of van-der-Waerden-like numbers, namely \emph{palindromic number-pairs}, obtained by the constraint on good partitions that they must be symmetric under reflection at the mid-point of the interval $\set{1,\dots,n}$. Perceived originally only as a heuristic tool for studying ordinary van der Waerden numbers, it turned out that these numbers are interesting objects on their own. An interesting phenomenon is that we no longer have the standard behaviour of the SAT instances with increasing $n$, where
\begin{itemize}
\item first all instances are satisfiable (for $n < \vdw(k;t_0,\dots,t_{k-1})$), and from a certain point on (the van der Waerden number) all instances are unsatisfiable (for $n \geq \vdw(k;t_0,\dots,t_{k-1})$),
\item but now first again all instances are satisfiable (for $n \leq p$), then we have a region with strict alternation between unsatisfiability and satisfiability, and only from a second point on all instances are unsatisfiable (for $n \geq q$).
\end{itemize}
These two turning points constitute the palindromic ``number'' $\vdwpd(2;3,t) = (p,q)$ as pairs of natural numbers. We were able to compute $\vdwpd(2;3,t)$ for $t \leq 27$. We also provide (conjectured) values for $t \leq 39$.\footnote{The sequence $\vdwpd(2;3,t)$ is \url{http://oeis.org/A198684}, \url{http://oeis.org/A198685} in the ``On-Line Encyclopedia of Integer Sequences'' (the first and the second components).} The full definition is in Section \ref{sec:Palindromes}, while the special case experimentally studied in this paper is defined as follows:
\begin{center}
  In $\vdwpd(2;3,t) = (p,q)$,\\
  the number $q$ is the smallest number such that for all $n \geq q$ and\\
  for all $P_0, P_1 \subseteq \set{1,\dots,n}$ with $P_0 \cap P_1 = \emptyset$ and $P_0 \cup P_1 = \set{1,\dots,n}$ with the property,\\
  that for all $v \in \set{1,\dots,n}$ we have $v \in P_0 \Leftrightarrow n+1-v \in P_0$ and $v \in P_1 \Leftrightarrow n+1-v \in P_1$,\\
  either $P_0$ has an arithmetic progression of size $3$ or $P_1$ has an arithmetic progression of size $t$, or both.

  While $p$ is the largest number such that for all $n \leq p$ and for all such $(P_0, P_1)$\\
  neither $P_0$ has an arithmetic progression of size $3$ nor $P_1$ has an arithmetic progression of size $t$.
\end{center}
In the ordinary case of plain partitions (without the additional symmetry condition) we have $p+1 = q$, and thus one uses just one number (instead of a pair), however here we can have a ``palindromic span'', that is, $p+1 < q$ can happen for the palindromic case. The reason is that from a good partition of $\set{1,\dots,n}$ we obtain a good partition of $\set{1,\dots,n-1}$ by simple removing $n$, however for ``good palindromic partitions'' besides removing $n$ we also need to remove the corresponding vertex $1$ (due to the palindromicity condition).

Apparently the most advanced special algorithm (and implementation) for computing (mixed) van der Waerden numbers is the algorithm/implementation developed in \cite{Schweitzer2009Ramsey}. For computing $\vdw(2;3,17) = 279$, with this special algorithm a run-time of 552 days is reported (page 113); the machine used should be at most $30\%$ slower than the machine used in our experiments, and so this should translate into at least 400 days on our machine. As we can see in Table \ref{tab:complsolvervdw}, the \ntawsolver{} used is 85-times faster, while Table \ref{tab:CCvdW} shows, that \cubeconq{} is around 40-times faster. These algorithms know nothing about the specific problem, and are just given the generic SAT formulation of the underlying hypergraph colouring problem. So it seems that SAT solving does a good job here.\footnote{As discussed in Subsection \ref{sec:tawsbasic}, for enumerating all solutions for $n=\vdw(2;3,17)-1 = 278$ with \ntawsolver{} we need at most the time needed for determining unsatisfiability; in this special case we have actually precisely one solution.}

\subsection{Using SAT solvers}
\label{sec:usingsat}

As explored in Dransfield et al.\ \cite{dransfield}, Herwig et al.\ \cite{herwig}, Kouril \cite{KourilPaulW26,Kouril2012W34}, Ahmed \cite{ahmed2009, ahmed2010}, and Kullmann \cite{kullmann0,kullmann}, we can generate an instance $F(t_0,\dots,t_{k-1};n)$ of the satisfiability problem (for definition, see any of the above references) corresponding to $\vdw(k; t_0, t_1,\ldots, t_{k-1})$ and integer $n$, such that $F(t_0,\dots,t_{k-1};n)$ is satisfiable if and only if $n<\vdw(k; t_0, t_1,\ldots, t_{k-1})$.
In particular, the instance $F(3,t;n)$ corresponding to $\vdw(2; 3, t)$ with $n$ variables consists of the following clauses:
\begin{itemize}
\item[(a)] $\set{x_{a}, x_{a+d}, x_{a+2d}}$ with
         $a\geq{1}, d\geq{1}, a+2d\leq{n}$, and
\item[(b)] $\set{\ol{x}_{a}, \ol{x}_{a+d}, \cdots, \ol{x}_{a+d(t-1)}}$ with
         $a\geq{1}, d\geq{1}, a+d(t-1)\leq{n}$,
\end{itemize}
where an assignment $x_i = \varepsilon$ encodes $i \in P_{\varepsilon}$ for $\varepsilon \in \set{0,1}$
(if $x_i$ is not assigned but the formula is satisfied, then $i$ can be arbitrarily placed in either of the
blocks of the partition). The (``positive'') clauses (a) (consisting only of variables), constructed from all arithmetic progressions of length $3$ in $\set{1,\dots,n}$, prohibit the existence of an arithmetic progression of length $3$ in $P_0$. And the (``negative'') clauses (b) (consisting only of negated variables), constructed from all arithmetic progressions of length $t$ in $\set{1,\dots,n}$, prohibit the existence of an arithmetic progression of length $t$ in $P_1$. To check the satisfiability of the generated instance, we need to use a ``SAT solver''. A complete SAT solver finds a satisfying assignment if one exists, and otherwise correctly says that no satisfying assignment exists and the formula is unsatisfiable. One of the earliest complete algorithms is the DLL algorithm (\cite{dpll}), and our algorithm for computing $\vdw(2; 3, 19) \leq 349$, discussed in Section \ref{sec:tawsolver}, actually implements this very basic scheme, using modern heuristics.

SAT solving has progressed much beyond this simple algorithm, and the handbook \cite{2008HandbuchSAT} gives an overview (where \cite{Zha09HBSAT} discusses some applications of SAT to combinatorics). There in \cite{DP09HBSAT} we find a general overview on complete SAT algorithms, while \cite{KSS09HBSAT} gives an overview on incomplete algorithms. For complete algorithms especially the algorithms derived from the DLL algorithm are of importance, and there are two families, namely the (earlier) ``look-ahead solvers'' outlined in \cite{HvM09HBSAT}, and the (later) ``conflict-driven solvers'' (or ``CDCL'' like ``conflict-driven clause-learning'') outlined in \cite{MSLM09HBSAT}. In Section \ref{sec:remsat} we will discuss how general SAT solvers perform on the problems from this article. The motivation for our choice of the most basic DLL algorithm for tackling the unsatisfiability of the instance $F(3,19; 349)$, already employed in \cite{ahmed2010} and discussed in Subsection \ref{sec:comp349}, is, that on these special problems classes this basic algorithm together with a modern heuristic is very competitive --- best on ordinary problem instances, and beaten on palindromic instances only by the the \cubeconq{} method.\footnote{The \cubeconq{} method, developed originally on the instances of this article, combines a look-ahead solver with a conflict-driven solver, and is faster by a factor of two on palindromic instances.} And then it is also instructive to use such an algorithm, which due to its simplicity might enable greater insight. Another advantage of its simplicity is, that it can also count and enumerate the solutions, but in this article we focus mostly on mere SAT solving; see \cite{GSS09HBSAT} for an overview on counting solutions.

Local-search based incomplete algorithms (see Ubcsat-suite \cite{ubcsat})
are generally faster than a DLL-like algorithm in finding a satisfying assignment (on such combinatorial problems), and this is also the case for the instances of this article. However they may fail to deliver a satisfying assignment when there exists one, and they can not prove unsatisfiability. If they succeed on our instances, then they deliver a good partition, and thus a lower bound for a certain van der Waerden number. So such incomplete algorithms are used for obtaining good partitions and improving lower bounds of van der Waerden numbers. When they fail to improve the lower bound any further, we need to turn to a complete algorithm.

\subsubsection{Informed versus uninformed SAT solving}
\label{sec:infvsuninf}

We use general SAT solvers, and the new solvers developed by us are also general SAT solvers, which can run without modification on any SAT problem; these solvers just run on the naked and natural SAT formulation of the problem, without giving them further information. More specifically, to show unsatisfiability we have developed the \tawsolver{} (Section \ref{sec:tawsolver}) and the \cubeconq-method (Subsection \ref{sec:cubeconq}), while to find satisfying assignments we have selected local-search algorithms (Subsection \ref{sec:remsatincomp}).

On the other end of the spectrum is \cite{KourilPaulW26,Kouril2012W34}, which uses a highly specialised method, which involves a variety of specialised SAT solvers on specialised hardware, in combination with some special insights into the problem domain. For finding satisfying assignments we have the methods developed in \cite{herwig,HeuleWalsh2010SolutionsSymmetry,HeuleWalsh2010SolutionsSymmetryW}. For more examples on informed search to compute van der Waerden numbers, see also Section 2 of \cite{ahmed2013}.

Our ``uninformed approach'' has stronger bearings on general SAT solving, while the informed approach can be more efficient for producing numerical results (however it seems to need a lot of effort to beat general SAT solvers (by specialised SAT solvers); as we have already reported, our general methods are at least on the instances of this paper much faster than the dedicated (non-SAT-based) method in  \cite{Schweitzer2009Ramsey}).

\subsubsection{Parallel/distributed SAT solving}
\label{sec:parallelSAT}

The problems we consider are computationally hard, and for the hardest of them in this paper, computation of $\vdw(2; 3, 19) = 349$, a single processor, even when run for a long time,
is not enough. Hence some form of parallelisation or distribution of the work is needed.
Four levels of parallelisation have been considered for general-purpose SAT solving (in a variety of schemes):
\begin{enumerate}
\item[(i)] Processor-level parallelisation: This helps only for very special algorithms,
and can only achieve some relatively small speed-up; see \cite{HeuleMaaren2008Parallel} for an
example which exploits parallel bit-operations. It seems to play no role for the problems we are considering.
\item[(ii)] Computer-level parallelisation: Here it is exploited that currently a single (standard)
computer can contain up to, say, 16 relatively independent processing units, working on shared memory.
So threads (or processes) can run in parallel, using one (or more) of the following general forms of collaboration:
  \begin{enumerate}
  \item[(a)] Partitioning the work via partitioning the instance (see below); \cite{ZBH96,JurkowiakLiUtard2005PSatz} are ``classical'' examples.
  \item[(b)] Using the same algorithm running in various nodes on the same problem,
exploiting randomisation and/or sharing of learned results; see
\cite{HyvaerinenJunttilaNiemelae2009ClauseLearning,GuoHamadiJabbourSais2010DiversificationIntensification} for recent examples.
  \item[(c)] Using some portfolio approach, running different algorithms on the same problem,
exploiting that various algorithms can behave very differently and unpredictably; see \cite{Gu1999} for the first example.
  \end{enumerate}
  Often these approaches are combined in various ways; see
\cite{SchubertLewisBecker2009PaMiraXT, GilFloresSilveira2009PMSat, HyvaerinenJunttilaNiemelae2009PartitioningSearchSpaces, HyvaerinenJunttilaNiemelae2010PartitioningInstances}
for recent examples. Approaches (b) and (c) do not seem to be of much use for the well-specified problem domain of hard instances from Ramsey theory.
Only (a) is relevant, but in a more extreme form (see below). In the context of (ii), still only relatively ``easy'' problems (compared to the hard problems from Ramsey theory) are tackled.
\item[(iii)] Parallelisation on a cluster of computers: Here up to, say, 100 computers are considered,
with restricted communication (though typically still non-trivial). In this case, the approach (ii)(a) becomes more dominant,
but other considerations of (ii) are still relevant. For hard problems this form of computation is a common approach.
\item[(iv)] Internet computation, with completely independent computers, and only very basic communication between the centre and the machines: In principle, the number of computers is unbounded. Since progress must be guaranteed, and the instances for which Internet computation is applied would be very hard, at the global level only (ii)(a) is applicable (while at a local level all the other schemes can in principle be applied). Yet there is no real example for a SAT computation at this level.
\end{enumerate}
We remark that the classical area of ``high performance computing'' seems to be of no relevance for SAT solving,
since the basic SAT algorithms like unit-clause propagation are, different from typical forms of numerical computation, inherently sequential (compare also our remarks to (i)). However using dedicated hardware with specialised algorithms has been utilised in \cite{KourilPaulW26,Kouril2012W34}, yielding the currently most efficient machinery for computing van der Waerden numbers.

A major advantage of the DLL solver architecture (which has been further developed into so-called ``look-ahead'' SAT solvers) is that the computation is easily parallelisable and distributable: Just compute the tree only up to a certain depth $d$, and solve the (up to) $2^d$ sub-problems at level $d$. Only minimal interaction is required: The sub-problems are solved independently, and in case one sub-problem has been found satisfiable, then the whole search can be aborted (for the purpose of mere SAT-solving; for counting all solutions of course the search needs to be completed). And the sub-problems are accessible via the partial assignment constituting the path from the root to the corresponding leaf, and thus also require only small storage space. This is the core of method (ii)(a) from above, and will be further considered in Subsection \ref{sec:comp349} (for our special example class).

In the subsequent subsection we will discuss the general merits of applying SAT solving to (hard) Ramsey problems. One spin-off of this combination lies in pushing the frontier of large computations. As a first example we have developed in \cite{HeuleKullmannWieringaBiere2011Cubism,TakHeuleBiere2012CC}, motivated by the considerations of the present article, an improved method for (ii)(a) called ``\cubeconq'', which is also relevant for industrial problems (typically from the verification area). One aspect exploited here is that for extremely hard problems, splitting into millions of sub-instances is needed. In the literature until now (see above for examples) only splitting as required, by at most hundreds of processors, has been performed, while it turned out that the above ``extreme splitting'', when combined with ``modern'' (CDCL) SAT solvers, is even beneficial when considered as a (hybrid) solver on a single-processor, and this for a large range of problem instances.

\subsubsection{Synergies between Ramsey theory and SAT}
\label{sec:merits}

For Ramsey-numbers (see \cite{Radziszowski2006RamseySurvey} for an overview on exact results),
relatively precise asymptotic bounds exist, and due to the inherent symmetry, relatively
specialised methods for solving concrete instances have an advantage. Van-der-Waerden-like numbers
seem harder to tackle, both asymptotically and exactly, and perhaps the only way ever to know the
precise values is by computation (and perhaps this is also true for Ramsey-numbers,
only more structures are to be exploited). SAT solvers are especially suited for the task,
since the computational problems are hypergraph-colouring problems, which, at least for two colours,
have canonical translations into SAT problems (as only considered in this paper).
For more colours, see the approach started in \cite{kullmann}, while for a general theory of multi-valued SAT close to
hypergraph-colouring, see \cite{Kullmann2007ClausalFormZI,Kullmann2007ClausalFormZII}.

Through applying and improving SAT solvers (as in the present article), Ramsey theory itself acquires an applied side.
Perhaps unknown to many mathematicians is the fact, that whenever for example a recent microchip is employed, this likely involves SAT solving, playing an important (though typically hidden) role in its development, by providing the underlying ``engines'' for its verification; see the recent handbook \cite{2008HandbuchSAT} to get some impression of this astounding development.
Now we believe that problem instances from Ramsey theory are good benchmarks, serving to improve SAT solvers on hard instances:

\begin{itemize}
\item Unlike with random instances (see \cite{Ach09HBSAT} for an overview), instances from Ramsey theory are ``structured'' in various ways. One special structure which one finds in all these instances is that they are layered by the number of vertices (the same structural pattern is repeated again and again, on growing scales).
\item A major advantage of random instances is their scalability, that is, we can create relatively easily instances of the same ``structure'' and different sizes. With instances from Ramsey theory we can also vary the parameters, however due to the possibly large and unknown growth of Ramsey-like numbers, controlling satisfiability and hardness is more complicated here. This possible disadvantage can be overcome through computational studies like in this paper,
which serve to calibrate the scale via precise numerical data, so that the field of SAT instances from Ramsey-theory becomes accessible (one knows for initial parameter values the satisfiability status and (apparent) solving complexity, and gets a feeling what happens beyond that).
\item In this paper, we consider two instance classes: instances related to ordinary van der Waerden numbers $\vdw(2;3,t)$
and instances related to the palindromic forms $\vdwpd(2;3,t)$. Now already with these two classes, the two main types of complete
SAT solvers, ``look-ahead'' (see \cite{HvM09HBSAT}) and ``conflict-driven'' (see \cite{MSLM09HBSAT}), are covered in the
sense that they dominate on one class each (and are (relatively) efficient); see Section \ref{sec:remsat} for further details.
On the other hand, for random instances only look-ahead solvers are efficient (for complete solvers).
\item Especially for local-search methods (see \cite{KSS09HBSAT} for an overview), these problems are hard,
but not overwhelmingly so (for the ranges considered), and thus all the given lower bounds can trigger further progress
(and insight) into the solution process in a relatively simple engineering-like manner (by studying which algorithms work best where).
\item On the other hand, for upper bounds we need to show unsatisfiability, which is much harder (we can only solve much smaller instances). All applications of SAT solving
in hardware verification are ``unsatisfiability-driven'' (see \cite{Bie09HBSAT,Kro09HBSAT} for introductions). So future progress
in solving hard Ramsey instances might trigger a breakthrough in tackling unsatisfiability,
and should then also improve these industrial applications.
\end{itemize}
We believe that for better SAT solving, established hard problem instances are needed in a great variety, and we
believe that Ramsey theory offers this potential. To begin the process of applying Ramsey theory in this direction,
problem instances from this paper (as well as related to \cite{kullmann}) have been used in the SAT 2011 competition
(\url{http://www.satcompetition.org/2011/}). As already mentioned in the previous subsection, the first fruits of the
collaboration between SAT and Ramsey theory appeared in \cite{HeuleKullmannWieringaBiere2011Cubism,TakHeuleBiere2012CC},
yielding a method for tackling hard problems with strong scalability.

Finally, the interaction between Ramsey theory and SAT should yield new insights for Ramsey theory itself:
\begin{enumerate}
\item The numerical data can yield conjectures on growth rates; see Subsection \ref{sec:conjupb}.
\item The good partitions found can yield conjectures on patterns; see Section \ref{sec-pat}.
\item New forms of Ramsey problems can be found through algorithmic considerations; see Section \ref{sec:Palindromes}.
\item The SAT solving process, considered \emph{in detail}, acts like a microscope, enabling insights into the structure of
the problem instances which are out of sight for Ramsey theory yet. For approaches towards structures in SAT instances,
which we hope to study in the future, see \cite{SS09HBSAT,Kullmann2007HandbuchMU}.
\end{enumerate}

\subsection{The results of this paper}
\label{sec:results}

In Section \ref{sec:tawsolver}, we present the new SAT solver, \ntawsolver, with superior performance on the instances considered in this paper (only for palindromic instances the new hybrid method \cubeconq{} is superior).
Section \ref{sec-comp} contains our results on the numbers $\vdw(2; 3, t)$. We discuss the computation of the one new van der Waerden number, and present further conjectures regarding precise values\footnote{to establish these conjectures will require major advances in SAT solving} and the growth rate. In Section \ref{sec-pat}, we investigate some patterns we found in the good partitions (establishing the lower bounds). In Section \ref{sec:Palindromes}, we introduce palindromic problems and the corresponding palindromic number-pairs. Finally in Section \ref{sec:remsat}, we discuss the observations on the use of the various SAT solvers involved.

In this paper, we represent partitions of $\vdw(2; 3, t)$ as bitstrings.
For example, the partition $P_0=\set{1,4,5,8}$ and $P_1=\set{2,3,6,7}$, which is an example of a good partition of $\set{1,2,\dots,8}$, where $8 = \vdw(2; 3, 3)-1$, is represented as $01100110$, or more compactly as $01^20^21^20$, using exponentiation to denote repetition of bits.

\section{The \tawsolver}
\label{sec:tawsolver}

We now discuss the \tawsolver, an open-source SAT solver, created by the first author with a special focus on van der Waerden problems (version 1.0), and improved by the second author through an improved branching heuristic (version 2.6).\footnote{\url{http://sourceforge.net/projects/tawsolver/}, and in the \texttt{OKlibrary}:\\\url{https://github.com/OKullmann/oklibrary/blob/master/Satisfiability/Solvers/TawSolver/tawSolver.cpp}} Algorithm \ref{alg-revised-dpll} shows that the basic algorithm of the \tawsolver{} is the simplest possible (reasonable) DLL-scheme, just branching on a variable plus unit-clause propagation. As we can see in Section \ref{sec:remsat}, it is the strongest SAT solver on the instances considered in this paper, only beaten on palindromic problems by the new hybrid scheme \cubeconq, which came out as a result on research on the instances of this paper.

\subsection{The basic structure}
\label{sec:tawsbasic}

Algorithm \ref{alg-revised-dpll} specifies the \tawsolver, which for input $F$ (a formula or ``clause-set'') decides satisfiability:
\begin{enumerate}
\item Lines 3-5 is ``unit-clause propagation'' (UCP), denoted by the function $r_1$, which sets literals $x$ in the current $F$ to true while there are unit-clauses $\set{x} \in F$.
  \begin{enumerate}
  \item Setting a literal $x$ to true in a clause-set $F$ is performed by first removing all clauses from $F$ containing $x$, and removing the element $\ol{x}$ from the remaining clauses.
\item $r_1$ finds a contradiction (Line 4) by finding two unit-clauses $\set{v}$ and $\set{\ol{v}}$ (i.e., $v \wedge \neg v$).
\item While $r_1$ finds a satisfying assignment (Line 5) if all clauses vanished (have been satisfied).
  \end{enumerate}
\item Lines 6-7 give the branching heuristic, which yields the branching literal $x$, first set to true, then to false, in the recursive call of the \tawsolver.
  \begin{enumerate}
  \item $p(a,b) \in \RR_{>0}$ for $a,b \in \RR_{>0}$ in Line 6 is the ``projection'', and we consider three choices $p_+, p_*, p_{\tau}$.
  \item $w_F(x)$ for literal $x$ is a heuristical value, measuring in a sense the ``progress achieved'' when setting $x$ to FALSE (``progress'' in the sense of the instance becoming more constrained, so that more unit-clause propagations are to be expected).
  \item The details are specified in Subsections \ref{sec:tawsolver1020}, \ref{sec:tauproj}.
  \end{enumerate}
\item The implementation is discussed in Subsection \ref{sec:tawsolverimpl}.
\item The tree of recursive calls made by the solver is called the {\it DLL-tree} of $F$.
\end{enumerate}
Besides the choice of the heuristic, this is the basic SAT solver as published in \cite{dpll}. The implementation is optimised for the needs of the branching heuristic, which requires to know from each (original) clause in the input $F$ whether it has been satisfied meanwhile, and if not, what is its current length.

\begin{algorithm}[H]
\caption{\tawsolver}
\begin{algorithmic}[1]
  \State Global variable $F$, initialised by the input.
  \Function{DLL}{\ } : returns SAT or UNSAT for the current $F$
  \State Update $F$ to $r_1(F)$
  \State If contradiction found via $r_1$, then goto 12
  \State If satisfying assignment found via $r_1$, then return SAT
  \State Choose variable $v$ with maximal $p(w_F(v), w_F(\ol{v}))$
  \State If $w_F(v) \ge w_F(\ol{v})$, then $x := v$, else $x := \ol{v}$
  \State Set $x$ to TRUE in $F$; if \Call{DLL}{\ } = SAT, then return SAT
  \State Undo assignment of $x$
  \State Set $\ol{x}$ to TRUE in $F$; if \Call{DLL}{\ } = SAT, then return SAT
  \State Undo assignment of $\ol{x}$
  \State Undo assignments made by $r_1$
  \State Return UNSAT
\EndFunction
\end{algorithmic}
\label{alg-revised-dpll}
\end{algorithm}

With a small modification, namely just continuing when a satisfying assignment was found, the \tawsolver{} can also count all satisfying assignments, or output them; this is available as a compile-time option for the solver. In Section \ref{sec-pat}, we will discuss some patterns which we found in satisfying assignments for $F(3,t;n)$ with $n < \vdw(2;3,t)$. We do not report run-times for determining (or counting) all solutions in Section \ref{sec:remsat}, but for $n=\vdw(2;3,t)-1$ (empirically) the run-time is at most the run-time needed to determine unsatisfiability for $n=\vdw(2;3,t)$; for numerical values of solution-counts see \cite{Kouril2012W34}.

\subsection{Look-ahead solvers}
\label{sec:lasolvers}

It is useful for the general picture to consider the general $r_k$-operations, as introduced in \cite{Ku99b} and further studied in \cite{GwynneKullmann2012SlurSOFSEM,GwynneKullmann2012SlurJ}. These operations transform a clause-set $F$ into a satisfiability-equivalent clause-set via application of some forced assignments (i.e., where the opposite assignments would yield an unsatisfiable clause-set). Let $\bot$ be the empty clause, which stands for a trivial contradiction. $r_0$ just maps $F$ to $\set{\bot}$ in case of $\bot \in F$, while otherwise $F$ is left unchanged. Now we can recognise $r_1$ as an operation which is applied recursively to the result of $F$ with literal $x$ set to true if setting $\ol{x}$ to true yields $\set{\bot}$ via $r_0$. This scheme yields also the general $r_k$ for $k \in \NN$: as long as there is a literal $x$ such that $F$ with $\ol{x}$ set to true yields $\set{\bot}$ via $r_{k-1}$, set $x$ to true and iterate. The final result, denoted by $r_k(F)$, is uniquely determined. Besides the ubiquitous unit-clause propagation $r_1$ also $r_2$, called ``failed literal elimination'', is popular for SAT solving, and even $r_3$, typically called ``double look-ahead'', is used in some solvers (always partially, testing the reductions only for selected variables).

The general scheme for a look-ahead solver (as stipulated in \cite{Kullmann2007HandbuchTau}) now generalises the DLL-procedure from Algorithm \ref{alg-revised-dpll}, by replacing the reduction $F \leadsto r_1(F)$ in Line 3 by the general $F \leadsto r_k(F)$ for some $k \ge 1$. Furthermore, for the inspection of a branching variable and the computation of the heuristical values $w(v)$ and $w(\ol{v})$, now the effects of setting $v$ resp.\ $\ol{v}$ to true and performing $r_{k-1}$ reduction are considered. This explains also the notion of ``look-ahead'': the $r_k$-reduction can be partially achieved at the time when running through all variables $v$, setting $v$ resp.\ $\ol{v}$ to true and applying $r_{k-1}$ --- if this yields $\set{\bot}$, then performing the opposite assignment is justified. Since $r_1$ is the standard for reduction of a branch, (partial) $r_2$ is the default for the reduction at a node.\footnote{The look-ahead solvers \satz{} and \march{} run through the variables once (actually also only considering ``interesting'' variables by some criterion), and so they do not compute $r_2$, but only an approximation. The only solver to completely compute $r_2$ is the \oksolver (while \satz{} and \march{} search also for some $r_3$ reductions on selected variables).}

We see that \tawsolver{} uses $k=1$ (so the ``look-ahead'' uses $k=0$, and in this sense \tawsolver{} is a ``look-ahead solver with zero look-ahead''). The prototypical solver for using $k=2$ is the \oksolver{} (\cite{Ku2002h}). In a rather precise sense the \tawsolver{} can be considered at the level-1-version of the \oksolver{} (or the latter as the level-2-version of the \tawsolver). Also for the branching heuristic, which is discussed in the following subsection, \tawsolver{} uses the same scheme as the \oksolver{}, appropriately simplified to the lower level. Both \tawsolver{} and \oksolver{} are solvers with a ``mathematical meaning'', precisely implementing an algorithm to full extent, with the only magical numbers the clause-weights used in the branching heuristic.

The general scheme for the branching heuristic of a look-ahead solver, as developed in \cite{Kullmann2007HandbuchTau} (Subsection 7.7.2), is as follows: For a clause-set $F$ and its direct successor $F'$ on a branch (applying the branching assignment and further reductions), a ``distance measure'' $d(F,F') \in \RR_{>0}$ is chosen, with the meaning the bigger this distance, the larger the decrease in complexity. The branching heuristic considers for each variable $v$ its two successor $F', F''$ and computes the distances $d(F,F'), d(F,F'')$. Then via a ``projection'' $p: \RR_{>0}^2 \rightarrow \RR_{>0}$ one heuristical value $h_v := p(d(F,F'), d(F,F''))$ is obtained. Finally some $v$ with maximal $h_v$ is chosen. Choosing which of $v$ or $\ol{v}$ to be processed first (important for satisfiable instances) is done via a second heuristic, estimating the satisfiability-probabilities of $F', F''$ in some way.

\subsection{From \otawsolver{} to \ntawsolver}
\label{sec:tawsolver1020}

We are now turning to the discussion of the branching heuristic in \ntawsolver{} (lines 6, 7 in Algorithm \ref{alg-revised-dpll}), the version developed for this article. For \otawsolver{} (used in \cite{ahmed2009,ahmed2010}) the ``Two-sided Jeroslaw-Wang'' (2sJW) rule by Hooker and Vinay \cite{hv95} was used, which chooses $v$ such that the weighted sum of the number of clauses of $F$ containing $v$ is maximal, where the weight of a clause of length $k$ is $2^{-k}$.\footnote{We do not care much here about the order of branching, since the algorithm is only effective on unsatisfiable problems, where the order does not matter (while on satisfiable problems local search is much faster).} As discussed in \cite{Kullmann2007HandbuchTau}, the ideas from \cite{hv95} are actually rather misleading, and this is demonstrated here again by obtaining a large speed-up through the replacement of the branching heuristic, as can be seen by the data in Section \ref{sec:remsat} (comparing \otawsolver{} with \ntawsolver).

For a literal $x$, a clause-set $F$ and $k \in \NN$ let $\ldeg_F^k(x) := \abs{\set{C \in F : x \in C \wedge \abs{C} = k}}$ be the ``literal degree'' of $x$ in the $k$-clauses of $F$. The 2sJW-rule consists of three components:
\begin{enumerate}
\item The weight $w_F(x)$ of literal $x$ is set as $w_F(x) := \sum_k 2^{-k} \cdot \ldeg_F^k(x)$.
\item A variable $v$ with maximal $p_+(w_F(v), w_F(\ol{v}))$ for $p_+(a,b) := a+b$ is chosen.
\item The literal $x \in \set{v,\ol{v}}$ to be set first to true is given by the condition $w_F(x) \ge w_F(\ol{x})$.
\end{enumerate}
This approach has the following fundamental flaws:
\begin{enumerate}
\item The choice of the first branch ($v$ or $\ol{v}$) is mixed up with the choice of $v$ itself, but very different heuristics are needed:
  \begin{enumerate}
  \item For the choice of the first branch, some form of approximated \emph{satisfiability}-probability must be maximised,
  \item while the branching-variable must minimise some approximated tree-size for the worst case, the \emph{unsatisfiable} case.
  \end{enumerate}
  In 2sJW the weights $2^{-k}$ are only motivated by satisfiability-probabilities, but are used for the choice of $v$ itself.
\item Once total weights $w_F(v), w_F(\ol{v})$ have been determined, one number (the projection) must be computed from this (to be maximised). 2sJW uses the sum, which, as demonstrated in \cite{Kullmann2007HandbuchTau}, corresponds to minimising a \emph{lower bound} on the DLL-tree-size --- much better is the product $p_*(a,b) := a \cdot b$, which corresponds to minimising an \emph{upper bound} on the tree-size.
\end{enumerate}
So the improved heuristic (which nowadays, when extended appropriately to take the look-ahead into account, is the basis for all look-ahead solvers) chooses clause-weights $w_2, w_3, \dots \in \RR_{>0}$, from which the total weight
\begin{displaymath}
 w_F(x) := \sum_k w_k \cdot \ldeg_F^k(x)
\end{displaymath}
is determined, and chooses a variable $v$ with maximal
\begin{displaymath}
  p_*(w_F(v), w_F(\ol{v})) = w_F(v) \cdot w_F(\ol{v}).
\end{displaymath}
The meaning of these weights is completely different from the argumentation in \cite{hv95}: as mentioned, satisfiability-probabilities have no place here. The underlying distance measure is $\sum_k w_k \cdot \nu^k(F')$, where $F'$ is the resulting clause-set after performing the branch-assignment and the subsequent $r_k$-reduction, while $\nu^k(F')$ is the number of \emph{new} $k$-clauses in $F'$. When setting literal $x$ to true, then $\ldeg_F^k(\ol{x})$ is an ``approximation'' of the number of new clauses of length $k-1$ (since in the clauses containing $\ol{x}$ this literal is removed).

The weights $w_k^{\mathrm{OK}}$ for the \oksolver{} have been experimentally determined as roughly $5^{-k}$. Since the value of the first weight is arbitrary, the weights are rescaled to $w_2^{\mathrm{OK}} = 1$, obtaining then each new weight by multiplication with $1/5$. Now $w_2$ for the \tawsolver{} is a stand-in for the number of new 1-clauses, which are handled in the \oksolver{} by the look-ahead; accordingly it seems plausible that now $w_2$ needs a relatively higher weight. We rescale here the weights to $w_3 = 1$ (note that for the \tawsolver{} the weight $w_k$ concerns new clauses of length $k-1$). Empirically we determined $w_2 = 4.85$, $w_4 = 0.354$, $w_5 = 0.11$, $w_6 = 0.0694$, and thereafter a factor of $\frac{1}{1.46}$; thus starting with $w_2$ the next weights are obtained by multiplying with (rounded) $1/4.85, 1/2.82, 1/3.22, 1/1.59, 1/1.46, \dots$.

For the choice of the first branch there are two main schemes, as discussed in \cite{Kullmann2007HandbuchTau} (Subsection 7.9). Roughly, the target now is to get rid off (satisfy) as many short clauses as possible (since shorter clauses are bigger obstructions for satisfiability).\footnote{While for a good branching variable we want to \emph{create} as many short clauses as possible (via setting literals to false)!} Both schemes amount to choose literal $x \in \set{v,\ol{v}}$ with $w'_F(x) \ge w'_F(\ol{x})$ for some weights $w'_k$. For the Franco-estimator we have $w'_k = -\log(1 - 2^{-k})$, while for the Johnson-estimator we have $w'_k = 2^{-k}$. In the \oksolver{} the Franco-estimator is used. But for the \tawsolver{} with its emphasis on unsatisfiable instances, while the computation of the heuristic is very time-sensitive (much more so than for the \oksolver), actually just the same weights $w'_k= w_k$ are used.

As one can see from the data in Section \ref{sec:remsat}, on ordinary van der Waerden problems the new heuristic yields a reduction in the size of the DLL-tree by a factor increasing from $2$ to $5$ for for $t = 12, \dots, 16$ (comparing \ntawsolver{} with \otawsolver), and for palindromic problems by a factor increasing from $5$ to $20$ for $t = 17, \dots, 23$.\footnote{\ntawsolver{} additionally has the implementation improved, so that nodes are processed now twice as fast as with \otawsolver.} We do not present the data, but most of the reduction in node-count is due to the replacement of the sum as projection by the product (the optimised clause-weights only further improve the node reduction by at most $50\%$ for the biggest instances, compared with a simple but reasonable scheme like $2^{-k}$).

\subsection{The implementation}
\label{sec:tawsolverimpl}

The \tawsolver{} is written in modern C++ (C++11, to be precise), with around 1000 lines of code, with complete input- and output-facilities, error handling and various compile-time options for implementations. The code is highly optimised for run-time speed, but at the same time expressing the concepts via appropriate abstractions, relying on the expressiveness of C++ both at the abstraction- and the implementation-level, so that the compiler can do a good job producing efficient code.

Look-ahead solvers are often ``eager'', that is, they represent the clause-set at each node of the DLL-tree in such a way, that the current (``residual'') clause-set is visible to the solver, and precisely the current clauses can be accessed. On the other hand, conflict-driven solvers are all ``lazy'', that is, the initial clause-set is not updated, and the state of the current clause-set has to be inferred via the current assignment to the variables. The representation of the input clause-set $F$ by the \tawsolver{} now is ``mostly lazy'':
\begin{enumerate}
\item Assignments to variables are entered into a global array,
\item Via the usual occurrence lists, for each literal $x$ one obtains access to all the clauses  $C \in F$ with $x \in C$.
\item This representation of $F$ is static (is not updated), and in this sense we have a lazy datastructure.
\item But the status of clauses, which is either inactive (when satisfied) or active, and their length (in the active case) is handled eagerly, by storing status and length for each clause and updating this information appropriately. So at each node, when running through the occurrence lists (still as in the input), for each clause we can see directly whether the clause is active and in this case its current length.
\item When doing an assignment, then the clause-lengths are updated: if a literal is falsified in a clause, the length is decreased by one, and if a literal is satisfied, the status of the clause is set to inactive.
\item For each active clause containing a variable which is assigned, there is exactly one change (either decrease in length or going from active to inactive). This change is entered into a change-list.
\item When backtracking, then the assignment is simply undone by going through the change-list in reverse order, and undoing the changes to the clauses.
\end{enumerate}

No counters are maintained for the literal degrees $\ldeg^k(x)$. Instead, the heuristic is computed by running through all literal occurrences in the original input for the unassigned literals, and adding the contributions of the clauses which are still active (this is the use of maintaining the length of a clause).

When doing unit-clause propagation, the basic choice is whether performing a BFS search, by using a first-in-first-out strategy for the processing of derived unit-clauses, of a DFS search, using a last-in-first-out strategy. BFS is slightly easier to implement, but on the palindromic vdW-instances needs roughly $10\%$ more unit-clauses to propagate\footnote{the final result is uniquely determined, but in general there are many ways to get there}, while on ordinary vdW-instances it uses less propagations, though the difference is less than $2\%$, and thus DFS is the default. This can also be motivated by the consideration that newly derived unit-clauses can be considered to be ``more expensive'', and thus should be treated as soon as possible.

Look-ahead solvers in general rely on the distance for branch-evaluation to be positive, while a zero value should indicate that a special reduction can be performed. And indeed, when counting new clauses, then the weighted sum being zero means that an autarky has been found, a partial assignment not creating new clauses, which means that all touched clauses are satisfied; see \cite{Kullmann2007HandbuchMU}.\footnote{The point about autarkies is that they can be applied satisfiability-equivalently.} Thus starting with the \oksolver{}, look-ahead solvers looked out for such autarkies, and applied them when found (\cite{HvM09HBSAT,Kullmann2007HandbuchTau}). Now for a zero-look-ahead solver like the \tawsolver, these autarkies are just pure literals (only occurring in one sign, not in the other). Their elimination causes a slight run-time increase, without changing much anything else, and so by default they are not eliminated but not chosen for branching (if there are still non-pure literals).

\subsection{The optimal projection: the $\tau$-function}
\label{sec:tauproj}

In \cite{Kullmann2007HandbuchTau} it is shown that the $\tau$-function is the best generic projection in the following sense:
\begin{itemize}
\item The $\tau$-function is defined for arbitrary tuples $a \in \RR^n$, $n \in \NN$, namely $\tau(a) \in \RR_{>0}$ is the unique $x \ge 1$ such that $\sum_{i=1}^n x^{-a_i} = 1$.
\item This projection induces a linear order on the set of all such ``branching tuples'' $a$ (of arbitrary length) by defining $a \le b$ if $\tau(a) \le \tau(b)$; here ``$a \le b$'' means that $a$ is better than $b$.
\item Theorem 7.5.3 in \cite{Kullmann2007HandbuchTau} shows that when imposing some general consistency-constraints on the comparison of branching tuples (where it is of importance that branching tuples can have arbitrary length), then there is precisely one such linear order on the set of branching tuples, namely the one induced by $\tau$.
\end{itemize}
Now specific solvers might have a special built-in bias, and, more importantly, the theorem is not applicable when considering only branching tuples of length $2$ (as it is the case for ordinary boolean SAT solving). But nevertheless, considering the $\tau$-function as projection (more precisely, since we maximised projection values, $1/\tau$ is used) is an interesting option, and leads to the \ttawsolver{} (with ``$\tau$'' in place of ``\texttt{t}''):
\begin{displaymath}
  p_{\tau}(w_F(v), w_F(\ol{v})) := 1 / \tau(w_F(v), w_F(\ol{v})).
\end{displaymath}
In this context it makes sense to definitely forbid distance-values $0$, and thus pure literals are now eliminated.\footnote{That is, only eliminating those literals (by setting $\ol{x}$ to true) with $w_F(x) = 0$; these eliminations might create further pure literals, which will be eliminated when in the child node the branching variable is computed, and so on.} In Section \ref{sec:remsat} we see that \ttawsolver{} is faster than \ntawsolver{} on large palindromic problems due to a much reduced node-count, but on ordinary problems the node-count stays basically the same, and then the overhead for computing $p_{\tau}$ makes the \ttawsolver{} slower.

The weights for \ttawsolver{} have been empirically determined as $w_2 = 7$, $w_4 = 0.31$, $w_5 = 0.19$, and then a factor of $\frac{1}{1.7}$; so starting with $w_2$ the next weights are obtained by multiplying with $1/7, 1/3.22, 1/1.63, 1/1.7, \dots$.\footnote{We consider the values for the weights as reasonable all-round values. A deeper understanding, based on the theory developed in \cite{Kullmann2007HandbuchTau}, is left for future investigations.}

\section{Computational results on $\vdw(2; 3, t)$}
\label{sec-comp}

This section is concerned with the numbers $\vdw(2; 3, t)$. The discussion of the computation of $\vdw(2; 3, 19)$ is the subject of Subsection \ref{sec:comp349}. Conjectures on the values of $\vdw(2; 3, t)$ for $20 \leq t \leq 30$ are presented in Subsection \ref{sec:Somenewconjectures}, and also further lower bounds for $31 \leq t \leq 39$ are given there. Finally in Subsection \ref{sec:conjupb}, we update the conjecture on the (quadratic) growth of $\vdw(2; 3, t)$.

\subsection{$\vdw(2; 3, 19)=349$}
\label{sec:comp349}

The lower bound $\vdw(2; 3, 19)\geq 349$ was obtained by Kullmann \cite{kullmann0}
using local search algorithms and it could not be improved any further using these incomplete algorithms (because, as we now know, the bound is tight).
An example of a good partition of the set $\set{1,2,\ldots,348}$ is as follows:

\scriptsize
$$1^{4}01^{6}01^{18}01^{3}01^{4}01^{5}01^{4}01^{11}01^{9}01^{3}01^{6}0
1^{7}01^{5}01^{14}01^{16}0101^{2}0^{2}1^{2}01^{15}01^{4}01^{12}0$$
$$1^{15}01^{2}01^{5}01^{7}01^{10}01^{13}01^{2}01^{15}01^{12}01^{4}0
1^{15}01^{2}0^{2}1^{2}0101^{9}01^{6}01^{14}01^{5}01^{14}01^{2}.$$

\normalsize

To finish the search, i.e., to decide that a current lower bound of a certain van der
Waerden number is exact, one might require many years of CPU-time. Discovering a new van der Waerden number has always been a challenge,
as it requires to explore the search space completely, which has a size
exponential in the number of variables in the corresponding satisfiability instance.
To prove that an instance with $n$ variables is unsatisfiable, the DLL algorithm has to implicitly
enumerate all the $2^n$ cases. So the algorithm systematically explores all possible cases, however without actually
explicitly evaluating all of them --- herein lies the strength (and the challenge) for SAT solving.

In Subsection \ref{sec:parallelSAT}, we gave an overview on the area of distributing hard SAT problems from a general SAT perspective, and we are concerned here with method (ii)(a), applied to \tawsolver. We find the simplest division of the computation of the search into parts, that have no inter-process communication among themselves, together with the observation of some patterns, very successful. Namely a level (depth) $L \in \NNZ$ of the DLL-tree is chosen, where the level considers only the decisions (ignoring the variables inferred via unit-clause propagation), and the $2^L$ subtrees rooted at that level are distributed among the processors.

To show the unsatisfiability of $F(3,19;349)$, we have used \otawsolver{} and 2.2 GHz AMD Opteron 64-bit processors (200 of them) from the {\tt cirrus} cluster at Concordia University for running the
distributed branches of the DLL-tree. The value $L=8$ was chosen, splitting the search space into $2^8=256$ independent parts (subtrees) $P_0, \dots, P_{255}$. The total CPU-time of all processor together was roughly 196 years (the first part $P_0$ alone has taken roughly 60 years of CPU-time).\footnote{Comparing \otawsolver{} with \ntawsolver, as we can see in Table \ref{tab:complsolvervdw}, the series of quotients $q_i = $ old-time / new-time, for $t = 12, \dots, 16$ is (rounded) $4.3,5.6,6.8,9.4,12.8$. This can be approximated well by the law $q_{i+1} = 1.3 \cdot q_i$, which would yield for $t=19$ the factor $12.8 \cdot 1.3^3 \approx 28.1$. So we would expect with \ntawsolver{} at least a speed-up by a factor $20$, which would reduce the 200 years to 10 years. Another approximation is obtained by considering Table \ref{tab:complsolvervdw}: we see that for each step from $t$ to $t+1$ the run-time always increases by less than a factor of $10$, while for $t=17$ we use less than five days, which would yield at most 500 days for $t=19$ with \ntawsolver.} For the prediction of run-times for the sub-tasks, the following observation made in Ahmed \cite{ahmed2010} was used. Recall that for \otawsolver{} (Algorithm \ref{alg-revised-dpll}) the branching rule was to select a variable with maximal $w_F(v) + w_F(\ol{v}) = \sum_k (\ldeg_F(v) + \ldeg_F(\ol{v})) \cdot 2^{-k}$, where for the first branch $x \in \set{v,\ol{v}}$ with $\sum_k \ldeg_F(v) \cdot 2^{-k} \geq \sum_k \ldeg_F(\ol{v})$ is chosen. Now the observation is that the parts (sub-trees of the DLL-tree) $P_0,P_1,P_2,P_4,P_8,P_{16},P_{32},P_{64},P_{128}$ are bigger than the others parts, and $P_0$ is the biggest.

Meanwhile our result $\vdw(2;3,19)=349$ has been reproduced in \cite{Kouril2012W34}, via an alternative SAT solving approach (see Subsection \ref{sec:infvsuninf}). At least at this time there seems to be no competitive alternative to SAT solving. See Section \ref{sec:remsat} for further remarks on SAT solving for these instances in general. It would be highly desirable to be able to substantially compress the resolution proofs obtained from the solver runs, so that a proof object would be obtained which could be verified by certified software (and hardware); see \cite{Cotton2010MinimizingResolution} for some recent literature.

\subsection{Some new conjectures}
\label{sec:Somenewconjectures}

In Table \ref{tab:conj3t}, we provide conjectured values of $\vdw(2; 3, t)$ for $t=20,21,\ldots,30$.
We have used the {\tt Ubcsat} suite \cite{ubcsat} of local-search based satisfiability
algorithms for generating good partitions, which provide a proof
of these lower bounds; see \ref{sec:Certificates3tp} for the certificates.
In Subsection \ref{sec:remsatincomp} we provide details of the algorithms used to find
the good partitions.
The characteristics of the searches were such that we believe these values to be optimal, namely with the right settings, these bounds can be found rather quickly, and in the past, all such conjectures turned out to be true (though, as discussed below, the situation gets weaker for $t=29,30$). However, since local search based algorithms are incomplete (they may fail to
deliver a satisfying assignment, and hence a good partition when there exists one), it remains to prove
exactness of the numbers using a complete satisfiability solver or some complete colouring algorithm.

\begin{table}[H]
  \centering
  \begin{tabular}{c|*{11}{c}}
    $t$ & 20 & 21 & 22 & 23 & 24 & 25 & 26 & 27 & 28 & 29 & 30\\
    \hline
    $\vdw(2;3,t) \geq$ & 389 & 416 & 464 & 516 & 593 & 656 & 727 & 770 & 827 & 868 & 903
  \end{tabular}
  \caption{Conjectured precise lower bounds for $\vdw(2;3,t)$}
  \label{tab:conj3t}
\end{table}

We observe that for $t=24,25,\ldots,30$ we have $\vdw(2; 3, t)>t^2$, which
refutes the possibility that $\forall\, t : \vdw(2; 3, t)\leq t^2$,
as suggested in \cite{blr2008}, based on the
exact values for $5\leq t \leq 16$ known by then. Further (strict) lower bounds we found are in Table \ref{tab:further3t} (where now we think it is likely that these bounds can be improved; see \ref{sec:Certificates3tf} for the certificates).

\begin{table}[H]
  \centering
  \begin{tabular}{c|*{9}{c}}
    $t$ & 31 & 32 & 33 & 34 & 35 & 36 & 37 & 38 & 39\\
    \hline
    $\vdw(2;3,t) >$ & 930 & 1006 & 1063 & 1143 & 1204 & 1257 & 1338 & 1378 & 1418
  \end{tabular}
  \caption{Further lower bounds for $\vdw(2;3,t)$}
  \label{tab:further3t}
\end{table}
That we conjecture the data of Table \ref{tab:conj3t} to be true, that is, that the used local-search algorithm is strong enough, while for the data of Table \ref{tab:further3t} that algorithm seems too weak to reach the solution, has the following background in the data: As we report in Subsection \ref{sec:remsatincomp}, in the range $24 \leq t \leq 33$ the local-search algorithm RoTS from the \texttt{Ubcsat} suite was found best-performing. This algorithm is used in an incremental fashion, initialising the search by known solutions for smaller $n$. This approach for $t=28$, with a cut-off $5 \cdot 10^6$ rounds, found a solution for $n=826$, and in 1000 independent runs (non-incremental) two solutions were found. But with cut-off $10^7$ in 1000 runs and cut-off $2 \cdot 10^7$ in 500 runs no solutions was found. From our experience this seems ``pretty safe'' for a conjecture. We are entering now a transition period. For $t=29$ the iterative approach with cut-off $5 \cdot 10^6$ found the solution for $n=867$, while cut-off $10^7$ found no solution for $n=868$ in 1000 runs. For $t=30$ the iterative approach managed to find a solution for $n=897$; restarting it with cut-off $10^8$ found a solution for $n=902$, while for $n=903$ no solution with that cut-off was found in 300 runs. So we see that already $t=30$ is stretching it. However for $t=31$ the iterative approach with cut-off $10^8$ only reached $n=919$ (despite restarts), while we happen to have a palindromic solution for $n=930$ (these are much easier to find; see Subsection \ref{sec:Palindromesconj}). So here now we believe we definitely over-stretched the abilities of the algorithm.

\subsection{A conjecture on the upper bound}
\label{sec:conjupb}

An important theoretical question is the growth-rate of $t \mapsto \vdw(2;3,t)$.
Although the precise relation ``$\vdw(2;r,t) \leq t^2$'' has been invalidated by our results,
quadratic growth still seems appropriate (see \cite{kullmann0} for a more general
conjecture on polynomial growth for van der Waerden numbers in certain directions of the parameter space; indeed in some directions linear growth is proven there):

\begin{conj}\label{conj-1}
There exists a constant $c>1$ such that $\vdw(2; 3, t) \leq ct^2$.
\end{conj}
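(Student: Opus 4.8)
The plan is to reduce Conjecture~\ref{conj-1} to a density-structural statement about sets without $3$-term arithmetic progressions, obtain a weak (super-polynomial) bound by soft means, and then attempt to push it down to quadratic by exploiting the rigidity forced on such a set when it must block \emph{every} long progression. Fix $n$ and suppose $\{1,\dots,n\}=P_0\cup P_1$ is a good partition for $w(2;3,t)$, so $P_0$ contains no $3$-AP and $P_1$ contains no $t$-AP. The first observation is that a good partition makes $P_0$ extremely well spread: for every common difference $d$ with $(t-1)d<n$ and every residue class $r$ modulo $d$, the set $P_0$ must meet every block of $t$ consecutive terms of the progression $\{r,r+d,r+2d,\dots\}\cap\{1,\dots,n\}$, since otherwise that block would be a $t$-AP inside $P_1$. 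Taking $d=1$ shows $P_0$ is $t$-syndetic, hence $\abs{P_0}\geq n/t$; taking all $d\leq n/(t-1)$ shows $P_0$ is $t$-syndetic inside \emph{every} dilated sub-progression of $\{1,\dots,n\}$.

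Secondly, I would combine this with the quantitative theory of $3$-AP-free sets. Writing $r_3(n)$ for the largest size of a $3$-AP-free subset of $\{1,\dots,n\}$, Roth's theorem gives $r_3(n)=o(n)$, and the Kelley--Meka bound, as refined by Bloom--Sisask, gives $r_3(n)\leq n\exp(-c(\log n)^{\beta})$ for absolute constants $c,\beta>0$. The chain $n/t\leq\abs{P_0}\leq r_3(n)$ then forces $\exp(c(\log n)^{\beta})\leq t$, that is, $n\leq t^{O((\log t)^{1/\beta-1})}$: a quasi-polynomial upper bound for $w(2;3,t)$. (Even plain finiteness follows softly: since $P_0$ is $3$-AP-free, the density of $P_1$ tends to $1$, and Gowers' effective form of Szemer\'edi's theorem then produces a $t$-AP in $P_1$ once $n$ exceeds a tower-type function of $t$.) The entire difficulty is to replace ``quasi-polynomial'' by ``quadratic''.

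The decisive and genuinely hard step is structural: one must show that no $3$-AP-free set can be $t$-syndetic inside every dilated sub-progression of $\{1,\dots,n\}$ once $n$ exceeds $ct^2$. Heuristically, near-extremal $3$-AP-free sets are of Behrend type --- essentially thin shells of high-dimensional spheres pulled back to $\{1,\dots,n\}$ --- and such sets are highly non-uniform across scales, so they cannot be syndetic simultaneously in all residue classes modulo all small moduli; conversely, a set that is forced to be this dense and this uniform already at scale $t^2$ ought to contain a $3$-AP, by an energy-increment or Bohr-set argument tailored to this extra regularity. Quantifying this incompatibility so that the threshold comes out as $n=\Theta(t^2)$ rather than quasi-polynomial is exactly where present techniques stop; this is in essence a well-known open problem (Graham's conjecture on the quadratic, respectively polynomial, growth of $w(2;3,t)$), and a genuinely new idea seems to be required. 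A reasonable intermediate target, perhaps within reach of refined additive-combinatorial methods, would be a fixed polynomial bound $w(2;3,t)\leq t^{C}$, or even $w(2;3,t)\leq t^{2+o(1)}$; either would already be a substantial advance. It is for this reason that the quadratic bound is stated here only as a conjecture, supported by the numerical evidence above and by the structural patterns discussed in Section~\ref{sec-pat}.
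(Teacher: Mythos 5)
The statement you were asked to prove is labelled a \emph{conjecture} in the paper, and indeed the authors offer no proof: their support for $w(2;3,t)\leq ct^2$ is purely empirical, namely the table of known values and new lower bounds together with a heuristic difference recursion $w(2;3,t)\leq w(2;3,t-1)+d(t-1)$ with $d\approx 3.35$ read off from the data, which integrates to the guess $w(2;3,t)\leq 1.675\,t^2$. Your proposal is therefore not in competition with a proof in the paper, and its verifiable parts are sound: the observation that in a good partition $P_0$ must meet every block of $t$ consecutive terms of any arithmetic progression of difference $d$ (else $P_1$ would contain a $t$-AP), hence $\abs{P_0}\geq n/t$ up to rounding, and the resulting chain $n/t\leq\abs{P_0}\leq r_3(n)$, which with the current best bounds on $3$-AP-free sets yields a quasi-polynomial upper bound on $w(2;3,t)$, and with Roth/Szemer\'edi alone yields finiteness. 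This is the standard rigorous route to upper bounds for these numbers, and you are right that it stalls well short of quadratic. You also candidly flag that the step from quasi-polynomial to $ct^2$ is the whole difficulty and is open; your Behrend-type incompatibility heuristic is plausible but is an idea for an attack, not an argument. So the honest assessment is: no gap in what you actually claim to prove, but the conjecture itself remains unproved by you exactly as it remains unproved in the paper --- the two differ only in the kind of evidence offered (your asymptotic/structural reduction to $r_3(n)$ versus the authors' numerical extrapolation), and your write-up should present itself as such supporting discussion rather than as a proof sketch that might be completed by routine means.
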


See Conjecture \ref{conj-3a} for a strengthening. To determine the current best guess for $c$, and to give some heuristic justification for Conjecture \ref{conj-1}, we observe the known exact values and lower bounds, and we arrive at the following possible recursion:
\begin{displaymath}
  \vdw(2; 3, t) \leq \vdw(2; 3, t-1)+d(t-1),
\end{displaymath}
for $4 \leq t \leq 39$ and some $d > 0$, with $\vdw(2;3,3)=9$. So we make the Ansatz $\vdw(2;3,t) \leq w_t := 9 + \sum_{i=3}^{t-1} d \cdot i$, for $t \geq 3$, where $d := \max_{t=4}^{39} \frac{\vdw(2;3,t) - \vdw(2;3,t-1)}{t-1}$; in case $\vdw(2;3,t)$ is not known, we use the lower bounds from Tables \ref{tab:conj3t}, \ref{tab:further3t}. From our data we obtain $d = {593-516 \over 23} = \frac{77}{23}$ (see \ref{sec:OKlNumbers}).
We have (geometric sum) $w_t = \frac d2 t^2 - \frac 32 d t + 9 - 2d < \frac d2 t^2$, and so we obtain
\begin{displaymath}
  \vdw(2;3,t) \leq \frac d2 t^2 = \frac{77}{46} t^2 < 1.675 t^2,
\end{displaymath}
which satisfies all data regarding $\vdw(2; 3,t)$ presented so far.

\section{Patterns in the good partitions}
\label{sec-pat}

In this section, we investigate the set of all good partitions
corresponding to certain van der Waerden numbers $\vdw(2; 3, t)$ for
patterns. As described in Section \ref{sec:infvsuninf}, the motivation behind this section
is to obtain more problem-specific information on the solution-patterns, which may help to
design heuristics to reduce search-space
while computing specific van der Waerden numbers.

Let $S(t)$ denote the set of all binary strings each of which represents a good partition of the set
$\set{1,2,\ldots,\vdw(2; 3, t)-1}$.
Generating $S(t)$ involves traversing the respective search space completely.
Let $n_0(B)$, $n_1(B)$, and $n_{00}(B)$ be the number of zeros, ones, and double-zeros, respectively,
in a bitstring $B$ (note that three consecutive zeros are not possible in any $B \in S(t)$). Let $\epos(B)$ denote the sequence of powers of 1 in a bitstring $B$.
Let $\np(B)$ and $\nv(B)$ denote the number of peaks (local maxima) and valleys (local minima), respectively, in
$\epos(B)$ (not necessarily strict). For example, for the compact bitstring $1^8001^601^301^101^3001^501^801^5001^301^101^301^6001^8$ (with $n_0 = 16$, $n_1 = 60$ and $n_{00} = 4$),
we have the following $\epos$, with \textrm{p} and \textrm{v}, marking peaks and valleys, respectively,
corresponding to changes in magnitudes.

$${8 \atop{\textrm p}} {6 \atop } {3 \atop } {1 \atop {\textrm v}} {3 \atop } {5 \atop }
{8 \atop {\textrm p}} {5 \atop } {3 \atop } {1 \atop {\textrm v}} {3 \atop } {6 \atop } {8 \atop {\textrm p}}$$

And for $B = 1^101^101^201^201^301^3$ we have $n_0(B) = 5$, $n_1(B) = 12$, $n_{00}(B) = 0$, while
there is one valley followed by one peak, and thus $\nv(B) = \np(B) = 1$.

\subsection{Number of 0's and 00's}
\label{sec:mum0}

In this section, we determine the number $\min\set{n_0(B): B\in S(t)}$,
$\max\set{n_0(B): B\in S(t)}$, and $\max\set{n_{00}(B):B\in S(t)}$ for $3\leq t\leq 14$.
Observations in Table \ref{tab-zeros} lead us to Conjectures
\ref{conj-2} and \ref{conj-3}.

\begin{longtable}[c]{|c|c|c|}
\caption{Zeros in good partitions of $\set{1,2,\dots,\vdw(2; 3, t)-1}$}
\label{tab-zeros}\\
  \hline \hline
  $\vdw(2; 3, t)$ & $(\min\set{n_0(B): B\in S(t)}$, & $\max\set{n_{00}(B): B\in S(t)}$\\
 {} & $\max\set{n_0(B): B\in S(t)})$ & {} \\
  \hline
{$\vdw(2; 3, 3)$} & {(4, 4)} & {2}\\
{$\vdw(2; 3, 4)$} & {(6, 6)} & {2}\\
{$\vdw(2; 3, 5)$} & {(7, 9)} & {2}\\
{$\vdw(2; 3, 6)$} & {(8, 10)} & {4}\\
{$\vdw(2; 3, 7)$} & {(11, 12)} & {3}\\
{$\vdw(2; 3, 8)$} & {(14, 14)} & {1}\\
{$\vdw(2; 3, 9)$} & {(16, 16)} & {4}\\
{$\vdw(2; 3, 10)$} & {(19, 21)} & {5}\\
{$\vdw(2; 3, 11)$} & {(19, 22)} & {5}\\
{$\vdw(2; 3, 12)$} & {(22, 22)} & {1}\\
{$\vdw(2; 3, 13)$} & {(25, 29)} & {5}\\
{$\vdw(2; 3, 14)$} & {(29, 29)} & {4}\\
\hline
\end{longtable}

It seems that there is little variation concerning the total number of zeros:
\begin{conj}\label{conj-2}
There exists a constant $c > 0$ such that
$\abs{n_0(B)-n_0(B')} \leq ct$, $\forall{B,B'}\in S(t)$  with $t\geq 3$.
\end{conj}

And there seem to be very few consecutive zeros:
\begin{conj}\label{conj-3}
There exists a constant $c > 0$ such that
$n_{00}(B) < ct$, $\forall{B}\in S(t)$ with $t\geq 3$.
\end{conj}

\subsection{Number of 1's}
\label{sec:num1}

In this section, we determine $T=\min\set{\np(\epos(B))+\nv(\epos(B)): B\in S(t)}$, as well as
minimum and maximum values of $n_1(B)$ over all $B\in S(t)$.
The observations in Table \ref{tab-ones} lead
us to Conjectures \ref{conj-2v}, \ref{conj-3a}, and Questions \ref{ques-4} and \ref{ques-5}.

{\scriptsize
\begin{longtable}[c]{|c|c|c|c|}
\caption{ Selected good-partitions of $\set{1,2,\ldots,\vdw(2; 3, t)-1}$}
\label{tab-ones}\\
  \hline \hline
 {\tt $\vdw(2; 3, t)$} & {\tt A good partition $B$ corresponding to $T$} &{\tt $T$} & {$\min\set{n_1(B): B\in S(t)}$,}\\
 {} & {} &{} & {$\max\set{n_1(B): B\in S(t)}$}\\
  \hline
\endhead
\hline
  \multicolumn{4}{|r|}{{Continued on Next Page\ldots}} \\
\hline
\endfoot
\endlastfoot
{} & {\tt } & {} & {}\\
{$\vdw(2; 3, 3) = 9$} & {$1^2001^200$} & {$1$}& {(4, 4)}\\
{} & {\tt (2 2)} & {}& {}\\
{} & {\tt } & {}& {}\\
{$\vdw(2; 3, 4) = 18$} & {$1^3001^101^3001^101^3$} & {$5$}& {(11, 11)}\\
{} & {\tt (3 1 3 1 3)} & {}& {}\\
{} & {\tt } & {}& {}\\
{$\vdw(2; 3, 5) = 22$} & {$001^3001^101^4001^401^1$} & {$4$}& {(12, 14)}\\
{} & {\tt (3 1 4 4 1)} & {}& {}\\
{} & {\tt } & {}& {}\\
{$\vdw(2; 3, 6) = 32$} & {$01^5001^501^3001^5001^5$} & {$3$}& {(21, 23)}\\
{} & {\tt (5 5 3 5 5)} & {}& {}\\
{} & {\tt } & {}& {}\\
{$\vdw(2; 3, 7) = 46$} & {$1^101^101^401^201^501^401^1001^301^501^201^50$} & {$8$}& {(33, 34)}\\
{} & {\tt (1 1 4 2 5 4 1 3 5 2 5)} & {}& {}\\
{} & {\tt } & {}& {}\\
{$\vdw(2; 3, 8) = 58$} & {$1^401^201^401^101^401^301^5001^501^301^401^101^401^201^1$} & {$12$}& {(43, 43)}\\
{} & {\tt (4 2 4 1 4 3 5 5 3 4 1 4 2 1)} & {}& {}\\
{} & {\tt } & {}& {}\\
{$\vdw(2; 3, 9) = 77$} & {$1^8001^601^301^101^3001^501^801^5001^301^101^301^6001^8$} & {$5$}& {(60, 60)}\\
{} & {\tt (8 6 3 1 3 5 8 5 3 1 3 6 8)} & {}& {}\\
{} & {\tt } & {}& {}\\
{$\vdw(2; 3, 10) = 97$} & {$1^701^401^201^5001^2001^701^401^801^101^801^4001^6001^2001^801^9$} & {$13$}& {(75, 77)}\\
{} & {\tt (7 4 2 5 2 7 4 8 1 8 4 6 2 8 9)} & {}& {}\\
{} & {\tt } & {}& {}\\
{$\vdw(2; 3, 11) = 114$} & {$01^{10}01^{4}001^{6}01^{10}01^{2}001^{9}01^{6}01^{1}01^{9}001^{1}001^{10}01^{6}001^{10}01^{10}$} & {$11$}& {(91, 94)}\\
{} & {\tt (10 4 6 10 2 9 6 1 9 1 10 6 10 10)} & {}& {}\\
{} & {\tt } & {}& {}\\
{$\vdw(2; 3, 12) = 135$} &
{$1^{9}01^{8}01^{9}01^{2}01^{3}01^101^{7}01^{2}0101^{3}01^{11}0^{2}$} & {$17$}& {(112, 112)}\\
{} & {$1^{11}01^{3}0101^{2}01^{7}01^101^{3}01^{2}01^{9}01^{8}01^{9}$} & {$$}& {}\\
{} & {\tt (9 8 9 2 3 1 7 2 1 3 11 11 3 1 2 7 1 3 2 9 8 9)} & {}& {}\\
{} & {\tt } & {}& {}\\
{$\vdw(2; 3, 13) = 160$} &
{$1^101^601^{12}01^4001^{11}001^601^{10}01^201^401^{11}01^10$} & {$15$}& {(130, 134)}\\
{} & {$1^601^901^201^301^701^{10}01^1001^501^{12}01^501^401^2$} & {$$}& {}\\
{} & {\tt (1 6 12 4 11 6 10 2 4 11 1 6 9 2 3 7 10 1 5 12 5 4 2)} & {}& {}\\
{} & {\tt } & {}& {}\\
\hline
\hline
\end{longtable}
}

Again, there seems little variation concerning the total number of ones:
\begin{conj}\label{conj-2v}
There exists a constant $c > 0$ such that
$ \abs{n_1(B)-n_1(B')} \leq ct$, $\forall{B,B'}\in S(t)$ with $t\geq 3$.
\end{conj}

Stronger than Conjecture \ref{conj-2v}, the number of ones seems very close to the vdW-number for the previous $t$:
\begin{conj}\label{conj-3a}
There exists a constant $c > 0$ such that
$ \abs{\vdw(2; 3, t-1) - n_1(B)} < c t$, $\forall{B}\in S(t)$.
\end{conj}

This conjecture also implies the earlier conjecture on the quadratic growth of $\vdw(2;3,t)$:
\begin{lem}\label{lem:relconj}
  Conjecture \ref{conj-3a} implies Conjecture \ref{conj-2v} and Conjecture \ref{conj-1}.
\end{lem}
\begin{proof}
Conjecture \ref{conj-2v} follows by the triangle inequality. Conjecture \ref{conj-1} follows, if for $t$ large enough we can show $n_0(B) \le n_1(B)$ for all $B \in S(T)$, and this is a special case of Szemer\'edi's Theorem (\cite{Szemeredi1975AP}), which for arithmetic progressions of size $3$ was already proven in \cite{Roth1953vdW}\footnote{see \url{http://rothstheorem.wikidot.com/on-certain-sets-of-integers}}, namely that the relative size of maximum independent subsets of the hypergraph of arithmetic progressions of size $3$ in the numbers $1,\dots,t$ goes to $0$ with $t \rightarrow \infty$.
\end{proof}

We turn to the growth of the number of peaks and valleys:
\begin{ques}\label{ques-4}
For each positive constant $c$ does there exist a $t'$ such that for all $t\geq t'$,
$\np(\epos(B))+\nv(\epos(B)) \geq ct$, ($t\geq 3$) $\forall{B}\in S(t)$? (We conjecture yes).
\end{ques}

We conclude with the observation, that for $t > 3$ there do not seem to be long plateaus for the numbers of ones:
\begin{ques}\label{ques-5}
Is there a good partition $B\in S(t)$, ($t\geq 4$) with 3 consecutive
numbers equal in $\epos(B)$? (Note that, for $t=3$, the partition $1^101^1001^101^1$
has four consecutive exponents, which are the same.)
\end{ques}

\subsection{How can it help for SAT solving?}
\label{sec:canhelp}

If one of the above conjectures (or some other conjecture) turns out to be true, and if moreover the numerical constants have good estimates, then they can be used to restrict the search space. When using a general purpose SAT solver, this can be achieved by adding further constraints. It seems however that these constraints do not help with the search, even if we assume that they are true, since they are too difficult to handle for the solver. It seems the problem is that these constraints do not mix well with the original problem formulation, and a deeper integration is needed. Such an integration was achieved in the case of the palindromic constraint, which is the subject of the following section --- here an organic new problem formulation could be established, where the additional restriction doesn't appear as an ``add-on'', but establishes a natural new problem class.

\section{Palindromes}
\label{sec:Palindromes}

Recall Definitions \ref{def:vdwn}, \ref{def:goodpart}:
\begin{enumerate}
\item for given $k \in \NN$ (the number of ``colours''),
\item $t_0, \dots, t_{k-1}$ (the lengths of arithmetic progressions),
\item and $n \in \NN$ (the number of vertices)
\end{enumerate}
we consider block partitions $(P_0, \dots, P_{k-1})$ of $\set{1, \dots, n}$ such that no $P_i$ contains an arithmetic progression of length $t_i$ --- these are the ``good partitions'', and $\vdw(k; t_0, \dots, t_{k-1}) \in \NN$ is the smallest $n$ such that no good partition exists. If $(P_0, \dots, P_{k-1})$ is a good partition of $\set{1,\dots,n}$ w.r.t.\ $t_0, \dots, t_{k-1}$, then for $1 \leq n' \leq n$ we obtain a good partition of $\set{1,\dots,n'}$ w.r.t.\ $t_0, \dots, t_{k-1}$ by just removing vertices $n'+1, \dots, n$ from their blocks. Thus $\vdw(k; t_0, \dots, t_{k-1})$ completely determines for which $n \in \NN$ good partitions exist, namely exactly for $n < \vdw(k; t_0, \dots, t_{k-1})$.

\begin{defn}\label{def:mirror}
  For $n \in \NN$ let $\mir_n: \set{1,\dots,n} \rightarrow \set{1,\dots,n}$ (with ``m'' like ``mirror'') defined by $\mir_n(v) := n+1-v$. This map is extended to $S \subseteq \set{1,\dots,n}$ as usual: $\mir_n(S) := \set{\mir_n(v) : v \in S}$.
\end{defn}
Now if $(P_0, \dots, P_{k-1})$ is a good partition w.r.t.\ $n$, then also $(\mir_n(P_0), \dots, \mir_n(P_{k-1}))$ is a good partition w.r.t.\ $n$. So it is of interest to consider self-symmetric partitions (with $\mir_n(P_i) = P_i$ for all $i$):

\begin{defn}\label{def:goodpp}
  A \emph{good palindromic partition} of $\set{1,\dots,n}$ w.r.t.\ parameters $t_0,\dots,t_{k-1}$, where $n, t_0, \dots, t_{k-1} \in \NN$, is a good partition of $\set{1,\dots,n}$ w.r.t.\ $t_0,\dots,t_{k-1}$ such that for all $j \in \set{0,\dots,k-1}$ holds $\mir_n(P_j) = P_j$.
\end{defn}
We call these special good partitions ``palindromic'', since a block partition can be represented as a string of numbers over $\set{0, \dots, k-1}$, and then the block partition is palindromic iff the string is a palindrome (reads the same forwards and backwards). For example, the string $01^2001^20$ represents a good palindromic partition for $k=2$, $t_0=t_1=3$ and $n=8$, namely $(\set{1,4,5,8},\set{2,3,6,7})$, and so does $(\set{1,3,6,8},\set{2,4,5,7})$, represented by $0101^2010$, while $(\set{1,2,5,6},\set{3,4,7,8})$, represented by $001^2001^2$, is a good partition which is not palindromic.

For given $k$ and $t_0, \dots, t_{k-1}$ again we want to completely determine (in theory) for which $n$ do good palindromic partitions exist and for which not. The key is the following observation (which follows also from Lemmas \ref{lem:embpd}, \ref{lem:pdsat}).

\begin{lem}\label{lem:transferpdpartitions}
  Consider fixed $k, t_0, \dots, t_{k-1}$, and $n \geq 3$. From a good palindromic partition $(P_0, \dots, P_{k-1})$ of $\set{1,\dots,n}$ we obtain a good palindromic partition $(P_0', \dots, P_{k-1}')$ of $\set{1,\dots,n-2}$ by removing vertices $1, n$ and replacing the remaining vertices $v$ by $v-1$, that is, $P_i' := \set{v - 1 : v \in P_i \setminus \set{1,n}}$.
\end{lem}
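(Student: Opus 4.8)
The plan is to verify directly that the proposed construction produces a partition of $\set{1,\dots,n-2}$ that is both \emph{good} (no $P_i'$ contains an arithmetic progression of length $t_i$) and \emph{palindromic}, using only the definition of the shift map $v \mapsto v-1$ and the palindromic symmetry of the original partition. First I would observe that since $(P_0,\dots,P_{k-1})$ is a partition of $\set{1,\dots,n}$, deleting the two vertices $1$ and $n$ and then relabelling each remaining $v$ as $v-1$ is a bijection from $\set{2,\dots,n-1}$ onto $\set{1,\dots,n-2}$; hence $(P_0',\dots,P_{k-1}')$ is genuinely a block partition of $\set{1,\dots,n-2}$ (the blocks are pairwise disjoint and cover everything because the original blocks did).

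Next I would check goodness. Suppose for contradiction that some $P_i'$ contains an arithmetic progression $a, a+d, \dots, a+(t_i-1)d$ of length $t_i$. Applying the inverse relabelling $v \mapsto v+1$, this lifts to the arithmetic progression $a+1, a+1+d, \dots, a+1+(t_i-1)d$ of the same length $t_i$ inside $P_i \setminus \set{1,n} \subseteq P_i$. But that contradicts the assumption that $(P_0,\dots,P_{k-1})$ is a good partition w.r.t.\ $n$. So each $P_i'$ is free of arithmetic progressions of length $t_i$, i.e., $(P_0',\dots,P_{k-1}')$ is good.

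Finally I would check that the new partition is palindromic, i.e., invariant under $v \mapsto (n-2)+1-v = n-1-v$ on $\set{1,\dots,n-2}$. The point is that the reflection on $\set{1,\dots,n-2}$ is exactly the conjugate by the shift of the reflection $v \mapsto n+1-v$ on $\set{2,\dots,n-1}$: if $v \in \set{2,\dots,n-1}$ then $n+1-v \in \set{2,\dots,n-1}$ as well, and $(n+1-v)-1 = n-v = (n-1)-(v-1)$. Since $(P_0,\dots,P_{k-1})$ is palindromic, $v \in P_i \iff n+1-v \in P_i$ for all $v$; note in particular $1 \in P_i \iff n \in P_i$, so removing $\set{1,n}$ respects the symmetry. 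Translating through the shift, $w \in P_i' \iff w+1 \in P_i \iff n+1-(w+1) \in P_i \iff n-w \in P_i \iff (n-w)-1 \in P_i' \iff n-1-w \in P_i'$, which is precisely palindromicity of $(P_0',\dots,P_{k-1}')$ w.r.t.\ $n-2$.

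None of these steps presents a real obstacle — the lemma is essentially a bookkeeping statement, and the only thing to be careful about is the index arithmetic in the reflection (keeping $n+1-v$ versus $n-1-(v-1)$ straight) and confirming that vertices $1$ and $n$ really are the mirror pair that can be symmetrically deleted; the hypothesis $n \geq 3$ ensures $n-2 \geq 1$ so that the resulting partition is of a nonempty interval. The same reasoning also shows (as remarked in the text) that this is a special case of the more general embedding/satisfiability correspondences of Lemmas \ref{lem:embpd} and \ref{lem:pdsat}.
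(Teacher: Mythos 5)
Your proof is correct and is exactly the direct verification the paper has in mind when it calls the proof ``obvious'' (checking that the shift $v\mapsto v-1$ gives a bijection onto $\{1,\dots,n-2\}$, that arithmetic progressions lift back under the shift, and that the reflection on $\{1,\dots,n-2\}$ is the conjugate of $v\mapsto n+1-v$ by that shift); the paper itself writes no details and only notes that the statement alternatively follows from Lemmas \ref{lem:embpd} and \ref{lem:pdsat}. Nothing further is needed.
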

\begin{proof}
The notion of a good partition of $\set{1,\dots,n}$ w.r.t.\ $\vdw(k; t_0,\dots,t_{k-1})$, as defined in Definition \ref{def:goodpart}, can be generalised to good partitions of arbitrary $T \subseteq \ZZ$ by demanding that for every block partition $(P_0,\dots,P_{k-1})$ of $T$ into $k$ parts no part $P_j$ contains an arithmetic progression of size $t_j$. In the remainder of the proof we omit the ``w.r.t.\ $t_0,\dots,t_{k-1}$''.

If $T$ has a good partition, then also every subset has a good partition, by restricting the blocks accordingly, and for every $d \in \ZZ$ also $d + T = \set{d + x : x \in T}$ has a good partition, by shifting the blocks as well.

We can also generalise the notion of a good palindromic partition to intervals $T = \set{a, a+1, \dots, b} \subset \ZZ$ for $a < b$, defining now the mirror-map $m_{a,b}: T \rightarrow T$ via $v \in T \mapsto b + a - v$ ($m_n$ in Definition \ref{def:mirror} is the special case $m_n = m_{1,n}$).

Again, if $T$ has a good palindromic partition, then $d + T$ for $d \in \ZZ$ has as well. But for subsets of $T$ we can only consider sub-intervals $T' = \set{a',\dots,b'}$, where from both sides we have taken away equal amounts. That is, for $a \leq a' < b' \leq b$ with $a' - a = b - b'$ we have, that from a good palindromic partition for $T$ we can obtain a good palindromic partition for $T'$ (by just restricting the blocks).

So from a good palindromic partition of $\set{1,\dots,n}$ we obtain a good palindromic partition of $\set{1,\dots,n-2}$ by first restricting to $\set{2,\dots,n-1}$ and then shifting by $-1$.
\end{proof}

\begin{cor}\label{cor:transferpdpartitions1}
  If  there is no good palindromic partition of $\set{1,\dots,n}$, then there is no good palindromic partition of $\set{1,\dots,n+2 \cdot i}$ for all $i \in \NNZ$.
\end{cor}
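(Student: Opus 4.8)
The plan is to derive Corollary~\ref{cor:transferpdpartitions1} from Lemma~\ref{lem:transferpdpartitions} by contraposition, using a straightforward induction on $i$. The statement to prove is: if no good palindromic partition exists for $n$, then none exists for $n + 2i$ for any $i \in \NNZ$. Contrapositively, it suffices to show that the existence of a good palindromic partition for some $m \geq n+2$ (with $m \equiv n \pmod 2$, $m \geq 3$) forces the existence of one for $m - 2$, and then to iterate this down to $n$.

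First I would fix $n$ with no good palindromic partition, and suppose for contradiction that some $m = n + 2i_0$ with $i_0 \geq 1$ admits a good palindromic partition. Since $i_0 \geq 1$ we have $m \geq n + 2 \geq 3$ (using $n \geq 1$; in fact for the base case $n \in \{1,2\}$ a palindromic partition trivially exists, so the hypothesis already forces $n \geq 3$, but this is not even needed). Then I would apply Lemma~\ref{lem:transferpdpartitions}, which is applicable precisely because $m \geq 3$, to obtain a good palindromic partition for $m - 2 = n + 2(i_0 - 1)$. Repeating this step $i_0$ times in total — each time the relevant value is at least $n + 2 \geq 3$, so the hypothesis $n' \geq 3$ of the lemma is met — I arrive at a good palindromic partition for $n$ itself, contradicting the assumption. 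Formally this is a finite downward induction on $i_0$, with the lemma supplying the inductive step.

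There is essentially no obstacle here: the only point requiring the least care is checking that the hypothesis ``$n \geq 3$'' of Lemma~\ref{lem:transferpdpartitions} is satisfied at every stage of the descent. This holds because at each stage we apply the lemma to a value of the form $n + 2j$ with $j \geq 1$, which is $\geq n+2 \geq 3$ as soon as $n \geq 1$; and the very assumption of the corollary (no good palindromic partition for $n$) is only non-vacuous when $n \geq 3$ anyway, since for $n \leq 2$ every block partition is trivially good and palindromic. Thus the induction goes through cleanly, and the corollary follows.
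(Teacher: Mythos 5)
Your proposal is correct and matches the paper's (implicit) argument: the corollary is meant to follow directly from Lemma \ref{lem:transferpdpartitions} by contraposition and iterated application, exactly as you do. The care you take with the hypothesis $n \geq 3$ at each step is fine and raises no issues.
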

\begin{proof}
If there would be a good palindromic partition of $\set{1,\dots,n+2 \cdot i}$, then by repeated applications of Lemma \ref{lem:transferpdpartitions} we would obtain a good palindromic partition of $\set{1,\dots,n}$.
\end{proof}

Since by van der Waerden's theorem we know there always exists some $n$ such that for all $n' \geq n$ no good palindromic partition exists, we get that the existence of good palindromic partitions w.r.t.\ fixed $t_0, \dots, t_{k-1}$ is determined by two numbers, the endpoint $p$ of ``always exists'' resp.\ $q$ of ``never exists'', with alternating behaviour in the interval in-between:
\begin{cor}\label{cor:transferpdpartitions2}
  Consider the maximal $p \in \NNZ$ such that for all $n \leq p$ good palindromic partitions exist, and the minimal $q \in \NN$ such that for all $n \geq q$ no good palindromic partitions exist. Then $q - p$ is an odd natural number, where no good palindromic partition exists for $p+1$, but $p+2$ again has a good palindromic partition, and so on alternately, until from $q$ on no good palindromic partition exists anymore.
\end{cor}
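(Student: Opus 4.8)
The plan is to assemble Corollary~\ref{cor:transferpdpartitions2} from the preceding two corollaries together with van der Waerden's theorem, with no new combinatorial content needed beyond careful bookkeeping of parities. First I would check that the two extremal quantities are well-defined. For $p$: the value $n=1$ trivially admits a good palindromic partition (put the single vertex anywhere, the string of length one is a palindrome), and by van der Waerden's theorem there is some $n$ admitting no good partition at all, hence no good palindromic partition; so the set of $n$ for which \emph{all} $n' \leq n$ admit good palindromic partitions is nonempty and bounded above, and $p \in \NNZ$ exists. For $q$: again by van der Waerden's theorem, for $n$ large enough every partition (palindromic or not) contains a forbidden progression, so the set of $n$ such that all $n' \geq n$ admit no good palindromic partition is nonempty, and being a set of naturals bounded below it has a least element $q$. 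Note $p < q$ since at $p$ a good palindromic partition exists while from $q$ on none does.

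Next I would establish the alternating behaviour on the interval $\{p+1, \dots, q-1\}$ and pin down the parity of $q-p$. By maximality of $p$, the value $p+1$ admits \emph{no} good palindromic partition. By Corollary~\ref{cor:transferpdpartitions1} (contrapositive of Corollary~\ref{cor:transferpdpartitions2}'s ingredient, i.e. Corollary~\ref{cor:transferpdpartitions1} itself), since $p+1$ has no good palindromic partition, neither does $p+1+2i$ for any $i \in \NNZ$; in particular all of $p+1, p+3, p+5, \dots$ are ``bad''. On the other hand, by definition of $p$, the value $p$ is ``good'', and I claim every $p - 2j \geq 1$ is good by Lemma~\ref{lem:transferpdpartitions} — but more to the point, for the interval above $p$ I argue the even-offset values $p+2, p+4, \dots$ are good for all such values that are still $< q$: indeed if some $p+2i < q$ were bad, then by Corollary~\ref{cor:transferpdpartitions1} all $p+2i+2j$ would be bad, so in particular the whole tail $\{p+2i, p+2i+2, \dots\}$ is bad; combined with the odd-offset tail already being bad from $p+1$ on, one sees every $n \geq p+2i$ of the same parity as $p$ is bad and every $n \geq p+1$ of the opposite parity is bad — but this does not immediately kill $q$. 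Let me instead argue directly: let $q'$ be the smallest value $> p$ that is bad and such that \emph{every} $n \geq q'$ is bad. Such a $q'$ exists (van der Waerden again), $q' = q$ by definition of $q$, and $q'$ must have the opposite parity to $p+1$: if $q'$ had the same parity as $p+1$, then $q' - 2 \geq p+1$ would also be bad (it lies in the bad odd-offset tail from $p+1$), contradicting minimality of $q'$ unless $q'-2 = p$ — impossible since $p$ is good. Hence $q' \equiv p+1 - 2 \equiv p \pmod 2$? I need to redo this: $q$ has the \emph{same} parity as $p+1$, i.e.\ opposite parity to $p$, which is exactly $q - p$ odd.

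Let me restate the parity step cleanly, since this is the one place a sign-error would creep in. Write $\varepsilon = q - p \geq 1$. All values $p+1, p+3, \dots$ (odd offsets from $p$) are bad by Corollary~\ref{cor:transferpdpartitions1} applied to $p+1$. If $\varepsilon$ were even, then $q = p + \varepsilon$ has even offset from $p$; consider $q-2 = p + (\varepsilon - 2)$, still an even offset and still $\geq p$. If $q - 2 \geq p+1$, i.e.\ $\varepsilon \geq 3$, then $q-2$ is good (otherwise, being bad, Corollary~\ref{cor:transferpdpartitions1} forces $q-2, q, q+2, \dots$ all bad and also the odd tail is bad, contradicting minimality of $q$ — wait, I must instead use: $q-2$ good contradicts nothing, $q-2$ bad would contradict minimality of $q$ only if additionally all $n \geq q-2$ bad; but the even tail from $q-2$ plus odd tail from $p+1$ gives all $n \geq q-2$ bad, contradicting minimality of $q$). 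Hence $q-2$ is good; if $\varepsilon = 2$ then $q - 2 = p$ is good, consistent. So assuming $\varepsilon$ even does not yet contradict; the contradiction is that $q$ itself, having even offset from $p$, cannot be forced bad by any of our tools unless something below it in the same parity class is bad. This shows my outline needs the final observation: $q$ is bad \emph{and $q-2$ is good} (just shown, for $\varepsilon$ even, $\varepsilon \geq 2$). But then there is a good value ($q-2$) of the same parity as $q$, while all \emph{larger} values of either parity are bad — that is fine and does not contradict anything, so evenness of $\varepsilon$ is actually consistent with the tools \emph{unless} we invoke that $q-1$ lies between: $q-1$ has odd offset from $p$ (since $\varepsilon$ even means $q-1 = p + (\varepsilon-1)$, odd offset), hence $q-1$ is bad; and $q+1 = p+(\varepsilon+1)$, odd offset, bad; and $q = p+\varepsilon$, even offset. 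Now use Corollary~\ref{cor:transferpdpartitions1} on the good value $q-2$ via Lemma~\ref{lem:transferpdpartitions} in the \emph{forward} direction — but Lemma~\ref{lem:transferpdpartitions} only goes downward. So the honest conclusion is: the even tail $\{p+2, p+4, \dots\}$ contains both good values (like $p+2$, if $p+2 < q$, which follows once we know $q-p \geq 3$... circular).

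I will therefore structure the proof as: (1) define $p, q$, show they exist and $p < q$; (2) observe $p+1$ is bad, hence $p+1+2i$ bad for all $i$, hence $q$ has the parity of $p+1$ PROVIDED $q$ is odd-offset — which I establish by noting that the smallest bad value that begins an all-bad tail must be odd-offset from $p$, because any even-offset value $v$ with $p < v$ and $v-2 \geq p$ has $v-2$ good (as $v-2 = p$, or $v-2$ good since if $v-2$ bad then all $n \geq v-2$ bad contradicting minimality of $q$), so $v-1$ (odd offset, hence bad) shows the tail actually started no later than... hmm. The cleanest correct argument, which I will write out, is: since $p+1, p+3, \ldots$ are all bad, the question is only about even-offset values $p, p+2, p+4, \ldots$; by minimality of $q$ there is a smallest even-offset value $m \geq p$ beyond which all even-offset values are bad (van der Waerden), and then $q = m$ if $m - 1$'s tail is also handled — and since $m$ is even-offset and $m-1, m+1, \ldots$ odd-offsets are bad and $m, m+2, \ldots$ are bad, we get all $n \geq m$ bad, so $q \leq m$; conversely $m - 2 \geq p$ is good (else a smaller all-bad tail), so $m-2$ being good means not-all-$n \geq m-2$ are bad, so $q > m - 2$, giving $q = m$, and $m$ has the same parity as $p$, i.e.\ $q - p$ even?! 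This flatly contradicts the claimed statement, so I have a sign error: re-examine — if $m-2$ is good then $q \geq m$? We need $q$ = least value with all-larger bad; $m-2$ good means $m-2$ is not such a value, fine, and $m-1$: is everything $\geq m-1$ bad? $m-1$ is odd-offset (bad), $m$ even-offset (bad by def of $m$), $m+1$ odd (bad), $m+2$ even (bad), ... yes all bad, so $q \leq m-1 < m$, contradicting $q \geq m$. Resolution: $q = m-1$, which is \emph{odd}-offset from $p$, so $q - p$ is odd.

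The main obstacle, as the above scratch-work shows, is purely the parity bookkeeping: getting the direction of Corollary~\ref{cor:transferpdpartitions1} right (it propagates ``bad'' \emph{upward} by $+2$) and correctly identifying that $q$ must be the value right after the last good even-offset value $m-2$, namely $q = m - 1$, which forces $q - p$ odd. Once that is nailed down, the alternation statement — $p+1$ bad, $p+2$ good, $p+3$ bad, \dots, $q-1$ good, $q$ bad and all beyond bad — is immediate: even offsets from $p$ up to $q-1 = m-2$ are good, odd offsets from $p$ are all bad, and $q$ onward is bad. I would present this as a short clean paragraph rather than the case analysis above, using only Corollary~\ref{cor:transferpdpartitions1}, the maximality of $p$, the minimality of $q$, and van der Waerden's theorem as cited.
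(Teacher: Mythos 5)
Your final argument is correct and is essentially the proof the paper leaves implicit: Corollary \ref{cor:transferpdpartitions1} applied to the bad value $p+1$ eliminates all odd offsets from $p$, van der Waerden's theorem gives a least bad even-offset value $m$, and identifying $q=m-1$ (since everything $\geq m-1$ is bad while $m-2$ is good) yields both the odd parity of $q-p$ and the alternation. The visible false starts (including the momentary ``$q-p$ even'' conclusion) are all resolved correctly in your final restructuring, so there is no gap, though the write-up should be pruned to the clean paragraph you promise at the end.
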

\begin{proof}
By Corollary \ref{cor:transferpdpartitions1} there is no good palindromic partition for $p+1 + 2 i$ and all $i \in \NNZ$. Now for the first $i \in \NNZ$, such that $p+2 + 2 i$ has no good palindromic partition, we let $q' := (p+2 + 2 i) - 1 $. We have a good palindromic partition for $q-1$ by definition of $i$ (as the smallest such $i$) resp.\ in case of $i=0$ by definition of $p$. We have $q' + 2 j = (p+2 + 2 i) - 1 + 2 j = p+1 + 2 (i+j)$ for $j \in \NNZ$, and thus there is no good palindromic partition for $q' + 2j$. And if there would be a good palindromic partition for $q' + 1 + 2j = p+2 + 2 i + 2 j$, then by Corollary \ref{cor:transferpdpartitions1} there would be a good palindromic partition for $p+2 + 2i$. So we have $q' = q$.
\end{proof}

\begin{defn}\label{def:pdvdw}
  The \emph{palindromic van-der-Waerden number} $\vdwpd(k; t_0, \dots, t_{k-1}) \in \NNZ^2$ is defined as the pair $(p,q)$ such that $p$ is the largest $p \in \NNZ$ with the property, that for all $1 \leq n \leq p$ there exists a good palindromic partition of $\set{1,\dots,n}$, while $q$ is the smallest $q \in \NN$ such that for no $n \geq q$ there exists a good palindromic partition of $\set{1,\dots,n}$. We use $\vdwpd(k; t_0, \dots, t_{k-1})_1 = p$ and $\vdwpd(k; t_0, \dots, t_{k-1})_2 = q$. So $0 \leq \vdwpd(k; t_0, \dots, t_{k-1})_1 < \vdwpd(k; t_0, \dots, t_{k-1})_2 \leq \vdw(k; t_0, \dots, t_{k-1})$.

  The \emph{palindromic gap} is
  \begin{displaymath}
    \pdg(k; t_0, \dots, t_{k-1}) := \vdw(k; t_0, \dots, t_{k-1}) - \vdwpd(k; t_0, \dots, t_{k-1})_2 \in \NNZ,
  \end{displaymath}
 while the \emph{palindromic span} is defined as
 \begin{displaymath}
  \pds(k; t_0, \dots, t_{k-1}) :=  \vdwpd(k; t_0, \dots, t_{k-1})_2 - \vdwpd(k; t_0, \dots, t_{k-1})_1 \in \NN.
\end{displaymath}
\end{defn}

To certify that $\vdw(k;t_0,\dots,t_{k-1}) = n$ holds means to show that there exists a good partition of $\set{1,\dots,n-1}$ and that there is no good partition of $n$. For palindromic number-pairs we need to double the effort:
\begin{thm}\label{thm:certificatespd}
  To certify that $\vdwpd(k;t_0,\dots,t_{k-1}) = (p,q)$ holds, exactly the following needs to be shown for (arbitrary) $p \in \NNZ$, $q \in \NN$ with $p < q$:
  \begin{enumerate}
  \item[(i)] there are good palindromic partitions of $\set{1,\dots,p-1}$ and $\set{1,\dots,q-1}$ w.r.t.\ $t_0, \dots, t_{k-1}$;
  \item[(ii)] there are no good palindromic partitions of $\set{1,\dots,p+1}$ and $\set{1,\dots,q+1}$ w.r.t.\ $t_0, \dots, t_{k-1}$.
  \end{enumerate}
\end{thm}
\begin{proof}
The given conditions are necessary for $\vdwpd(k;t_0,\dots,t_{k-1}) = (p,q)$ by the defining properties of $p$ and $q$. We show that they are sufficient to establish $\vdwpd(k;t_0,\dots,t_{k-1}) = (p,q)$. First we have by Corollary \ref{cor:transferpdpartitions1} that $q - p$ is odd, since otherwise $p+1$ having no good palindromic partitions would yield that $q-1$ would have no good palindromic partition. Then, again by Corollary \ref{cor:transferpdpartitions1}, all $n \geq q+1$ have no good palindromic partition, while all $n \leq p-1$ have good palindromic partitions. By Corollary \ref{cor:transferpdpartitions2} we must now have $\vdwpd(k;t_0,\dots,t_{k-1}) = (p,q)$.
\end{proof}

\subsection{Palindromic vdW-hypergraphs}
\label{sec:palindvdwh}

Recall that a finite hypergraph $G$ is a pair $G = (V,E)$, where $V$ is a finite set (of ``vertices'') and $E$ is a set of subsets of $V$ (the ``hyperedges''); one writes $V(G) := V$ and $E(G) := E$. The essence of the (finite) van der Waerden problem (which we will now often abbreviate as ``vdW-problem'') is given by the hypergraphs $\arithp(t,n)$ of arithmetic progressions with progression length $t \in \NN$ and the number $n \in \NNZ$ of vertices:
\begin{itemize}
\item $V(\arithp(t,n)) := \set{1,\dots,n}$
\item $E(\arithp(t,n)) := \set{p \subseteq \set{1,\dots,n} : p \text{ arithmetic progression of length } t}$.
\end{itemize}
For example $\arithp(3,5) = (\set{1,2,3,4,5},\set{\set{1,2,3},\set{2,3,4},\set{1,3,5},\set{3,4,5}})$. Considering hypergraphs, the reader might wonder how determination of vdW-numbers fits with hypergraph colouring. While the determination of diagonal vdW-numbers is an ordinary hypergraph colouring problem, for general vdW-numbers a more general concept of hypergraph colouring is to be used, involving the simultaneous colouring of several hypergraphs in the following sense: The diagonal vdW-number $\vdw(k; t, \dots, t)$ for $k, t \in \NN$ is the smallest $n \in \NN$ such that the hypergraph $\arithp(t,n)$ is not $k$-colourable, where in general a $k$-colouring of a hypergraph $G$ is a map $f: V(G) \rightarrow \set{1,\dots,k}$ such that no hyperedge is ``monochromatic'', that is, every hyperedge gets at least two different values by $f$. For the general vdW-number $\vdw(k; t_0, \dots, t_{k-1})$ we now consider for each colour $i \in \set{0,\dots,k-1}$ the hypergraph $\arithp(t_i,n)$, and we forbid (to formulate ``good partition'') for each $i$ that there is a hyperedge in $\arithp(t_i,n)$ monocoloured with colour $i$ (while we do not care about the other colours here). Accordingly the SAT-encoding of ``$\vdw(2;3,t) > n$ ?'', as discussed in Subsection \ref{sec:usingsat}, exactly consists of the two hypergraphs $\arithp(3,n)$ and $\arithp(t,n)$ represented by positive resp.\ negative clauses.

The task now is to define the palindromic version $\pdarithp(t,n)$ of the hypergraph of arithmetic progressions, so that for diagonal palindromic vdW-numbers $\vdwpd(k; t, \dots, t) = (p,q)$ we have, that $q$ is minimal for the condition that for all $n \geq q$ the hypergraph $\pdarithp(t,n)$ is not $k$-colourable, while $p$ is maximal for the condition that for all $n \leq p$ the hypergraph is $k$-colourable. Furthermore we should have that for two-coloured problems (i.e., $k=2$) the SAT-encoding of ``$\vdwpd(2;t_0,t_1) > n$ ?'' (satisfiable iff the answer is yes) consists exactly of the two hypergraphs $\pdarithp(t_0,n)$, $\pdarithp(t_1,n)$ represented by positive resp.\ negative clauses (while for more than two colours generalised clause-sets can be used; see \cite{kullmann}).

Consider fixed $t \in \NN$ and $n \in \NNZ$. Obviously $\pdarithp(t,0) := \arithp(t,0) = (\set{},\set{})$, and so assume $n \geq 1$. Recall the permutation $m = m_n$ of $\set{1,\dots,n}$ from Definition \ref{def:mirror}. As every permutation, $m$ induces an equivalence relation $\sim$ on $\set{1,\dots,n}$ by considering the cycles, which here, since $m$ is an involution (self-inverse), just has the equivalence classes $\set{1,\dots,n} / \!\!\sim \:= \set{\set{v,f(v)}}_{v \in \set{1,\dots,n}}$ of size $1$ or $2$ comprising the elements and their images. Note that $m$ has a fixed point (an equivalence class of size $1$) iff $n$ is odd, in which case the unique fixed point is $\frac{n+1}2$. The idea now is to define $m': \set{1,\dots,n} \rightarrow \set{1,\dots,n}$, which chooses from each equivalence class one representative (so $m'(v) \in \set{v,m(v)}$ and $v \sim w \Leftrightarrow m'(v) = m'(w)$), and to let $\pdarithp(t,n)$ be the image of $\arithp(t,n)$ under $m'$, that is, $(m'(V(\arithp(t,n))),\set{m'(H)}_{H \in E(\arithp(t,n))})$. Naturally we choose $m'(v)$ to be the smaller of $v$ and $m(v)$. Now it occurs that images of arithmetic progressions under $m'$ can subsume each other, i.e., for $H_1, H_2 \in E(\arithp(t,n))$ with $H_1 \ne H_2$ we can have $m'(H_1) \subset m'(H_2)$, and so we define $\pdarithp(t,n)$ as the image of $\arithp(t,n)$ under $m'$, where also all subsumed hyperedges are removed (so we only keep the minimal hyperedges under the subset-relation).

\begin{defn}\label{def:pdarithp}
  For $t \in \NN$ and $n \in \NNZ$ the hypergraph $\pdarithp(t,n)$ is defined as follows:
  \begin{itemize}
  \item $V(\pdarithp(t,n)) := \set{1,\dots,\ceil{\frac n2}}$
  \item $E(\pdarithp(t,n))$ is the set of minimal elements w.r.t.\ $\subseteq$ of the set of $m'_n(H)$ for $H \in E(\arithp(t,n))$, where $m'_n: \set{1,\dots,n} \rightarrow V(\pdarithp(t,n))$ is defined by $m'_n(v) := v$ for $v \leq \ceil{\frac n2}$ and $m'_n(v) := n+1-v$ for $v > \ceil{\frac n2}$.
  \end{itemize}
\end{defn}
Using $\arithp(3,5) = (\set{1,2,3,4,5},\set{\set{1,2,3},\set{2,3,4},\set{1,3,5},\set{3,4,5}})$ as above, we have $m'(\set{1,2,3}) = \set{1,2,3}$, $m'(\set{2,3,4}) = \set{2,3}$, $m'(\set{1,3,5}) = \set{1,3}$ and $m'(\set{3,4,5}) = \set{1,2,3}$, whence $\pdarithp(3,5) = (\set{1,2,3}, \set{\set{1,3},\set{2,3}})$.

\begin{lem}\label{lem:embpd}
  Consider $t \in \NN$ and $n \in \NNZ$. The hypergraph $\pdarithp(t,n)$ is embedded into the hypergraph $\pdarithp(t,n+2)$ via the map $e: V(\pdarithp(t,n)) \rightarrow V(\pdarithp(t,n+2))$ given by $v \mapsto v+1$.
\end{lem}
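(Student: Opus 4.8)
The plan is to reduce everything to one compatibility identity between the folding maps $m'_n$ and $m'_{n+2}$ of Definition~\ref{def:pdarithp} under the shift $H \mapsto H+1 := \set{v+1 : v\in H}$, and then to read off both the hyperedge-to-hyperedge property and its converse. First I would record the cheap facts: $V(\pdarithp(t,n)) = \set{1,\dots,\ceil{\frac n2}}$ and $V(\pdarithp(t,n+2)) = \set{1,\dots,\ceil{\frac n2}+1}$ since $\ceil{\frac{n+2}2} = \ceil{\frac n2}+1$, so $e$ is injective with image exactly $V(\pdarithp(t,n+2))\setminus\set 1$; and $e$ is monotone in both directions, i.e.\ $X\subseteq Y \iff e(X)\subseteq e(Y)$ for $X,Y\subseteq V(\pdarithp(t,n))$.

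The key step is the identity $m'_{n+2}(v+1) = m'_n(v)+1$ for every $v\in\set{1,\dots,n}$, proved by the case split from Definition~\ref{def:pdarithp}: if $v\leq\ceil{\frac n2}$ then $v+1\leq\ceil{\frac n2}+1=\ceil{\frac{n+2}2}$ and both sides equal $v+1$; if $v>\ceil{\frac n2}$ then $v+1>\ceil{\frac{n+2}2}$ and $m'_{n+2}(v+1) = (n+2)+1-(v+1) = n+2-v = (n+1-v)+1 = m'_n(v)+1$. Applying this elementwise gives $m'_{n+2}(H+1) = e(m'_n(H))$ for every $H\subseteq\set{1,\dots,n}$. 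Since $H\mapsto H+1$ is a bijection from the length-$t$ arithmetic progressions contained in $\set{1,\dots,n}$ onto those contained in $\set{2,\dots,n+1}$, and since $(m'_{n+2})^{-1}(1)=\set{1,n+2}$ (so for an arithmetic progression $H'$ in $\set{1,\dots,n+2}$ we have $1\notin m'_{n+2}(H')$ iff $H'\subseteq\set{2,\dots,n+1}$), writing $\mathcal A_m := \set{m'_m(H) : H\text{ a length-}t\text{ arithmetic progression in }\set{1,\dots,m}}$ I obtain
\[ e(\mathcal A_n) \;=\; \set{A\in\mathcal A_{n+2} : 1\notin A}. \]

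Finally I would pass from $\mathcal A$ to the hyperedge sets $E(\pdarithp(t,m))$, which by Definition~\ref{def:pdarithp} are the $\subseteq$-minimal members of $\mathcal A_m$. If $A\subsetneq F$ with $1\notin F$ then $1\notin A$, so for $F$ with $1\notin F$ minimality in $\mathcal A_{n+2}$ is the same as minimality among members of $\mathcal A_{n+2}$ avoiding $1$; since $e$ is an order-isomorphism onto its image and $1\notin e(A)$ for all $A\in\mathcal A_n$, the displayed identity restricts to
\[ e\bigl(E(\pdarithp(t,n))\bigr) \;=\; \set{F\in E(\pdarithp(t,n+2)) : 1\notin F} \;=\; \set{F\in E(\pdarithp(t,n+2)) : F\subseteq e(V(\pdarithp(t,n)))}. \]
Thus $e$ is injective, sends every hyperedge of $\pdarithp(t,n)$ to a hyperedge of $\pdarithp(t,n+2)$, and realizes $\pdarithp(t,n)$ as the sub-hypergraph of $\pdarithp(t,n+2)$ induced on $e(V(\pdarithp(t,n)))$ — the required embedding (and, combined with Lemma~\ref{lem:pdsat}, this reproves Corollary~\ref{cor:transferpdpartitions1}).

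The step I expect to demand the most care is not the minimality bookkeeping (which is routine once $e(\mathcal A_n)=\set{A\in\mathcal A_{n+2}:1\notin A}$ is in hand), but pinning down the interaction of the mirror map with the ceiling: one must verify that the fibre of the vertex $1$ under $m'_{n+2}$ is precisely the pair $\set{1,n+2}$ of endpoints that Lemma~\ref{lem:transferpdpartitions} strips off, and that the boundary case $v=\ceil{\frac n2}$ (the fixed point $\frac{n+1}2$ when $n$ is odd) lands on the intended side after the shift. Getting these even/odd details right is what makes ``an arithmetic progression folds into something avoiding vertex $1$'' equivalent to ``it lies in the range of $e$'', which is the hinge of the whole argument.
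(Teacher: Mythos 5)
Your proposal is correct and follows essentially the same route as the paper's proof: the shift-compatibility identity $m'_n(v)=m'_{n+2}(v+1)-1$, the identification of the fibre $(m'_{n+2})^{-1}(1)=\set{1,n+2}$, and the observation that a folded progression containing vertex $1$ can never be a subset of one avoiding it, so subsumption among the ``old'' hyperedges is unaffected. Your write-up merely spells out the minimality bookkeeping a bit more explicitly than the paper does; no substantive difference.
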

\begin{proof}
  First we note that $\abs{V(\pdarithp(t,n+2))} = \abs{V(\pdarithp(t,n))}+1$, and so the range of $e$ is $V(\pdarithp(t,n+2)) \setminus \set{1}$. Let $G$ be the hypergraph with vertex set $V(\pdarithp(t,n+2)) \setminus \set{1}$, whose hyperedges are all those hyperedges $H \in E(\pdarithp(t,n+2))$ with $1 \notin H$.  We show that $e$ is an (hypergraph-)isomorphism from $\pdarithp(t,n)$ to $G$, which proves the assertion.

  Now obviously the underlying hypergraph $\arithp(t,n)$ is embedded into the underlying $\arithp(t,n+2)$ via the underlying map $v \in V(\arithp(t,n)) \mapsto v+1 \in V(\arithp(t,n+2))$, where the image of this embedding is given by the hypergraph with vertex set $V(\arithp(t,n+2)) \setminus \set{1,n+2}$, and where the hyperedges are those $H \in E(\arithp(t,n+2))$ with $1, n+2 \notin H$. Since $m'_{n+2}(n+2) = 1$ and $m'_n(v) = m'_{n+2}(v+1)-1$ for $v \in \set{1,\dots,n}$, the assertion follows from the fact that there are no hyperedges $H, H' \in E(\arithp(t,n+2))$ with $H \cap \set{1,n+2} \not= \emptyset$, $H' \cap \set{1,n+2} = \emptyset$ and $m'_{n+2}(H) \subset m'_{n+2}(H')$ (thus $m'_{n+2}(H')$ can only be removed from $\pdarithp(t,n+2)$ by subsumptions already at work in $\pdarithp(t,n)$), and this is trivial since $1 \in m'_{n+2}(H)$ but $1 \notin m'_{n+2}(H')$.
\end{proof}

The SAT-translation of ``Is there a good palindromic partition of $\set{1,\dots,n}$ w.r.t.\ $t_0,t_1$ ?'' is accomplished similar to the translation of ``$\vdw(2;t_0,t_1) > n$ ?'', now using $\pdarithp(t_0,n), \pdarithp(t_1,n)$ instead of $\arithp(t_0,n), \arithp(t_1,n)$:
\begin{lem}\label{lem:pdsat}
  Consider $t_0, t_1 \in \NN$, $t_0 \leq t_1$, and $n \in \NNZ$. Let the boolean clause-set $\Fpd(t_0,t_1,n)$ be defined as follows:
  \begin{itemize}
  \item the variable-set is $\set{1,\dots,\ceil{\frac n2}}$ ($ = V(\pdarithp(t_0,n)) = V(\pdarithp(t_1,n))$);
  \item the hyperedges of $\pdarithp(t_0,n)$ are directly used as positive clauses;
  \item the hyperedges $H$ of $\pdarithp(t_1,n)$ yield negative clauses $\set{\ol{v}}_{v \in H}$.
  \end{itemize}
  Then there exists a good palindromic partition if and only if $\Fpd(t_0,t_1,n)$ is satisfiable, where the satisfying assignments are in one-to-one correspondence to the good palindromic partitions of $\set{1,\dots,n}$ w.r.t.\ $(t_0,t_1)$. \qed
\end{lem}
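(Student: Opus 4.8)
The plan is to establish a bijection between good palindromic partitions of $\set{1,\dots,n}$ w.r.t.\ $(t_0,t_1)$ and satisfying assignments of $\Fpd(t_0,t_1,n)$, and the whole argument hinges on the observation that a palindromic partition is completely determined by its restriction to the ``first half'' $\set{1,\dots,\ceil{n/2}}$ of the vertices. First I would make precise the correspondence on the level of colourings: given a good palindromic partition $(P_0,P_1)$, define the assignment $\varphi$ on variables $v \in \set{1,\dots,\ceil{n/2}}$ by $\varphi(v) = 1$ if $v \in P_1$ and $\varphi(v) = 0$ (i.e.\ $\bar v$ true) if $v \in P_0$; conversely, from an assignment $\varphi$ recover a partition of the full vertex set $\set{1,\dots,n}$ by placing $v$ into $P_{\varphi(m'_n(v))}$, which is well-defined precisely because $m'_n(v) = m'_n(n+1-v)$, and which is automatically palindromic since $v$ and $n+1-v$ receive the same block. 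These two maps are patently mutually inverse, so the remaining content is that one side is ``good'' iff the other side ``satisfies''.

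Next I would verify the two implications of the defining equivalence. A partition $(P_0,P_1)$ of $\set{1,\dots,n}$ is good w.r.t.\ $(t_0,t_1)$ iff no arithmetic progression of length $t_0$ lies entirely in $P_0$ and no arithmetic progression of length $t_1$ lies entirely in $P_1$. Translating through $m'_n$: a progression $H \in E(\arithp(t_0,n))$ is monochromatic in $P_0$ exactly when every vertex of $m'_n(H)$ is assigned $0$, i.e.\ when the positive clause $m'_n(H)$ is falsified by $\varphi$; symmetrically for $t_1$ and negative clauses. The one subtlety here is subsumption: $\Fpd$ only keeps the minimal hyperedges of $\pdarithp(t_i,n)$ rather than all images $m'_n(H)$. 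But this is harmless — if $m'_n(H) \supseteq m'_n(H')$ then falsifying the clause $m'_n(H)$ forces falsifying the smaller clause $m'_n(H')$, so keeping only the minimal clauses preserves the satisfiability condition and the set of satisfying assignments unchanged. Hence ``for all $H$, clause $m'_n(H)$ satisfied'' is equivalent to ``for all minimal $H'$, clause $m'_n(H')$ satisfied'', which is what $\Fpd$ checks.

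The reason the whole thing works as an ``if and only if'' in both the good/bad and satisfiable/unsatisfiable directions is that every arithmetic progression contained in one half of a palindromic partition corresponds to a monochromatic hyperedge in the collapsed hypergraph, and — crucially — conversely every monochromatic hyperedge of $\pdarithp(t_i,n)$ lifts back to an actual arithmetic progression of length $t_i$ in $\set{1,\dots,n}$ (this is immediate because the hyperedges of $\pdarithp(t_i,n)$ were by construction defined as images $m'_n(H)$ of genuine progressions $H$). So no spurious obstructions are introduced by the folding. I do not expect a serious obstacle: the argument is essentially bookkeeping, and the authors have already flagged it as routine (``with obvious proof''; the \qed on the statement confirms they leave it to the reader). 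The one place to be careful, and where I would spend the most words, is exactly the subsumption point and the well-definedness of the inverse map on the fixed point $\frac{n+1}2$ when $n$ is odd — there the single middle vertex maps to the variable $\ceil{n/2}$ and its colour is read off directly, with no palindromic constraint to check, which is consistent with $m'_n$ being the identity on it.
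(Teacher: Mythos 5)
Your proof is correct: the folding map $m'_n$ gives exactly the intended bijection between palindromic partitions and assignments on $\set{1,\dots,\ceil{n/2}}$, and your handling of the subsumption step and of the odd-$n$ fixed point is right. The paper itself offers no written proof (the lemma is stated with \qed{} as immediate from Definition \ref{def:pdarithp} and the standard encoding of Subsection \ref{sec:usingsat}), and your argument is precisely the routine verification the authors leave to the reader.
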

For more than two colours, Lemma \ref{lem:pdsat} can be generalised by using generalised clause-sets, as in \cite{kullmann}, and there one also finds the ``generic translation'', a general scheme to translate generalised clause-sets (with non-boolean variables) into boolean clause-sets (see also \cite{Kullmann2007ClausalFormZI,Kullmann2007ClausalFormZII}).

\subsection{Precise values}
\label{sec:Palindromespv}

See Subsection \ref{sec:remsatcomp} for details of the computation.

{\renewcommand{\arraystretch}{1.4}
\begin{longtable}[c]{|c|c|c|c|}
\caption{Palindromic vdW-numbers $\vdwpd(2; 3, t)$}
\label{tab-pdprecise}\\
  \hline \hline
 $t$ & $\vdwpd(2;3,t)$ & $\pds(2;3,t)$ & $\pdg(2;3,t)$\\
  \hline
\endhead
\hline
  \multicolumn{4}{|r|}{{Continued on Next Page\ldots}} \\
\hline
\endfoot
\endlastfoot

$3$ & $(6,9)$ & $3$ & $0$ \\
\hline

$4$ & $(15,16)$ & $1$ & $2$\\
\hline
$5$ & $(16,21)$ & $5$  & $1$\\
\hline
$6$ & $(30,31)$ & $1$ & $1$\\
\hline
$7$ & $(41,44)$ & $3$ & $2$\\
\hline
$8$ & $(52,57)$ & $5$ & $1$\\
\hline
$9$ & $(62,77)$ & $15$ & $0$\\
\hline
$10$ & $(93,94)$ & $1$ & $3$\\
\hline
$11$ & $(110,113)$ & $3$ & $1$\\
\hline
$12$ & $(126,135)$ & $9$ & $0$\\
\hline
$13$ & $(142,155)$ & $13$ & $5$\\
\hline
$14$ & $(174, 183)$ & $9$ & $3$\\
\hline
$15$ & $(200, 205)$ & $5$ & $13$ \\
\hline
$16$ & $(232, 237)$ & $5$ & $1$ \\
\hline
$17$ & $(256, 279)$ & $23$ & $0$ \\
\hline
$18$ & $(299, 312)$ & $13$ & $0$ \\
\hline
$19$ & $(338, 347)$ & $9$ & $2$ \\
\hline
\hline
$20$ & $(380, 389)$ & $9$ & $\geq 0$ \\
\hline
$21$ & $(400, 405)$ & $5$ & $\geq 11$ \\
\hline
$22$ & $(444, 463)$ & $19$ & $\geq 1$ \\
\hline
$23$ & $(506, 507)$ & $1$ & $\geq 9$ \\
\hline
$24$ & $(568, 593)$ & $25$ & $\geq 0$ \\
\hline
$25$ & $(586, 607)$ & $21$ & $\geq 49$ \\
\hline
$26$ & $(634,643)$ & $9$ & $\geq 84$ \\
\hline
$27$ & $(664,699)$ & $35$ & $\geq 71$\\
\hline

\hline
\hline
\end{longtable}
}

\subsection{Conjectured values and bounds}
\label{sec:Palindromesconj}

For $28 \leq t \leq 39$ we have reasonable values on $\vdwpd(2;3,t)$, which are given in Table \ref{tab-pdconj}, and which we believe to be exact for $t \leq 35$. These values have been computed by local-search methods (see Subsection \ref{sec:remsatincomp}), and thus for sure we can only say that they present lower bounds. We obtain conjectured values for the palindromic span (which might however be too large or too small) and conjectured values for the palindromic gap (which additionally depend on the conjectured values from Subsection \ref{sec:Somenewconjectures}, while for $t \geq 31$ we only have the lower bounds from Subsection \ref{sec:Somenewconjectures}).

{\renewcommand{\arraystretch}{1.4}
\begin{longtable}{|c|c|c|c|}
\caption{\protect\makebox[23em]{Conjectured palindromic vdW-numbers $\vdwpd(2; 3, t)$}}
\label{tab-pdconj}\\
  \hline \hline
 $t$ & $\vdwpd(2;3,t) \geq$ & $\pds(2;3,t) \sim$ & $\pdg(2;3,t) \sim$\\
  \hline
\endhead
\hline
  \multicolumn{4}{|r|}{{Continued on Next Page\ldots}} \\
\hline
\endfoot
\endlastfoot

$28$ & $(728,743)$ & $15$  & $84$\\
\hline
$29$ & $(810,821)$ & $11$ & $47$\\
\hline
$30$ & $(844,855)$ & $11$ & $48$\\
\hline
\hline
$31$ & $(916,931)$ & $15$ & $0$\\
\hline
$32$ & $(958,963)$ & $5$ & $44$\\
\hline
$33$ & $(996,1005)$ & $9$ & $59$\\
\hline
$34$ & $(1054,1081)$ & $27$ & $63$\\
\hline
$35$ & $(1114,1155)$ & $41$ & $50$\\
\hline
$36$ & $(1186,1213)$ & $27$ & $45$\\
\hline
$37$ & $(1272, 1295)$ & $23$ & $44$\\
\hline
$38$ & $(1336, 1369)$ & $33$ & $10$ \\
\hline
$39$ & $(1406, 1411)$ & $5$ & $8$ \\
\hline

\hline
\hline
\end{longtable}
}

For the certificates for these lower bounds see \ref{sec:goodpp}.

\subsection{Open problems}
\label{sec:pdopen}

The relation between ordinary and palindromic vdW-numbers are of special interest:
\begin{itemize}
\item It seems the palindromic span can become arbitrarily large --- also in relative terms? Perhaps the span shows a periodic behaviour, oscillating between small and large?
\item Similar questions are to be asked for the gap. Does it attain value $0$ infinitely often?
\end{itemize}
Do the hypergraphs $\pdarithp(t,n)$ have interesting properties (more basic than their chromatic numbers)? A basic exercise would be to estimate the number of hyperedges and their sizes. In the subsequent Subsection \ref{sec:remsatcomp} we find data that SAT solvers behave rather different on palindromic vdW-problems (compared to ordinary problems). It seems that palindromic problems are more ``structured'' than ordinary problems --- can this be made more precise? Perhaps the hypergraphs $\pdarithp(t,n)$ show characteristic differences to the hypergraphs $\arithp(t,n)$, which could explain the behaviour of SAT solvers?

\subsection{Remarks on the use of symmetries}
\label{sec:remsym}

The heuristic use of symmetries for finding good partitions has been studied in \cite{herwig,HeuleWalsh2010SolutionsSymmetry,HeuleWalsh2010SolutionsSymmetryW} (while for symmetries in the context of general SAT solving see \cite{Sak09HBSAT}). Especially we find there an emphasis on ``internal symmetries'', which are not found in the problem, but are imposed on the solutions.

The good palindromic partitions introduced in this section are more restricted in the sense, that they are based on the symmetries $m$ from Subsection \ref{sec:palindvdwh} of the clause-sets $F$ expressing ``$\vdw(k; t_0, t_1,\ldots, t_{k-1}) > n$ ?'' (i.e., we have $m(F) = F$; recall Subsection \ref{sec:usingsat}), which then is imposed as an internal symmetry on the potential solution by demanding that the solutions be self-symmetric. In \cite{herwig} ``reflection symmetric'' certificates are mentioned, which for even $n$ are the same as good palindromic partitions, however for odd $n$ they ignore vertex $1$, not the mid-point $\ceil{\frac n2}$ as we do. This definition in \cite{herwig} serves to maintain monotonicity (i.e., a solution for $n+1$ yields a solution for $n$, while we obtain one only for $n-1$ (Lemma \ref{lem:transferpdpartitions}). We believe that palindromicity is a more natural notion, but further studies are needed here to compare these two notions.

Other internal symmetries used in \cite{herwig,HeuleWalsh2010SolutionsSymmetry,HeuleWalsh2010SolutionsSymmetryW} are obtained by modular additions and multiplications (these are central to the approaches there), based on the method from \cite{Rabung1979vdW} for constructing lower bounds for diagonal vdW-numbers. No generalisations are known for the mixed problems we are considering.

Finally we wish to emphasise that we do not consider palindromicity as a mere heuristic for finding lower bounds, but we get an interesting variation of the vdW-problem in its own right, which hopefully will help to develop a better understanding of the vdW-problem itself in the future.

\section{Experiments with SAT solvers}
\label{sec:remsat}

We conclude by summarising the experimental results and insights gained by running SAT solvers on the instances considered in this paper. All the solvers (plus build environments), generators and the data are available in the \OKlibrary{} (\cite{Kullmann2009OKlibrary}); see \ref{sec:OKl}
for more information.

For determining unsatisfiability we consider complete SAT solvers in Subsection \ref{sec:remsatcomp}.
In general, for (ordinary) vdW-problems look-ahead solvers seem to perform better than conflict-driven solvers, while for palindromic problems it seems to be the opposite. However \ntawsolver{} is the best (single) solver for both classes.

The hybrid approach, \cubeconq, was developed precisely on the instances of this paper, as discussed in \cite{HeuleKullmannWieringaBiere2011Cubism} (further developments one finds in \cite{TakHeuleBiere2012CC}). This approach is third-best on vdW-problems (after \ntawsolver{} and \ttawsolver), and best on palindromic vdW-problems (before \ttawsolver{} and \ntawsolver).

We conclude this section in Subsection \ref{sec:remsatincomp} by remarks on incomplete SAT solvers, used to obtain lower bounds (determine satisfiability).

For the experiments we used a 64-bit workstation with 32 GB RAM and Intel i5-2320 CPUs (6144 KB cache) running with 3 GHz, where we only employed a single CPU.

\subsection{Complete solvers}
\label{sec:remsatcomp}

Complete SAT solvers exist in mainly two forms, ``look-ahead solvers'' and ``conflict-driven solvers''; see \cite{MSLM09HBSAT,HvM09HBSAT} for general overviews on these solver paradigms. Besides the \tawsolver{} (see Section \ref{sec:tawsolver}), for our experimentation we use the following (publicly available) complete solvers, which give a good coverage of state of the art SAT solving and of the winners of recent SAT competitions and SAT races\footnote{The (parent) SAT competition homepage is at \url{http://www.satcompetition.org} with links to each individual competition.}:
\begin{itemize}
\item Look-ahead solvers:
  \begin{itemize}
  \item \oksolver{} (\cite{Ku2002h}), a solver with well-defined behaviour, no ad-hoc heuristics, and which applies complete $r_2$ (at every node). This solver won gold at the SAT 2002 competition.
  \item \satz{} (\cite{LA1996}), a solver which applies partial $r_2$ and $r_3$. In the \OKlibrary{} we maintain version 215, with improved/corrected in/output and coding standard.
  \item \march{} (\cite{HeuleDufourvanZwietenMaaren2004Reasoning}), a solver applying partial $r_2, r_3$, and resolution- and equivalence-preprocessing. \march{} contains the same underlying technology as its sibling solvers \texttt{march\_\{rw,hi,ks,dl,eq\}}, which won gold, silver and bronze at the 2004 to 2011 SAT competitions and SAT races. We use the \texttt{pl} (\underline{p}artial \underline{l}ookahead) version.
  \end{itemize}
\item Conflict-driven solvers:
  \begin{itemize}
  \item \texttt{MiniSat} family:
    \begin{itemize}
    \item \minisat{} (\cite{EenSoerensson2003Minisat}), version 2.0 and 2.2, the latest version of this well-established solver, used as starting point for many new conflict-driven solvers. Previous versions won gold at the SAT Race 2006 and 2008, as well as numerous bronze and silver awards at the SAT competition 2007.
    \item \cryptomini{} (\cite{CryptoSAT2009}), a \minisat{} derivative designed specifically to tackle hard cryptographic problems. This solver won gold at SAT Race 2010 and gold and silver at the SAT competition 2011. We use version 2.9.6.
    \item \glucose{} (\cite{AudemardSimon2009PredictingLearnt}), a \minisat{} derivative utilising a new clause scoring scheme and aggressive learnt-clause deletion. This solver won gold in both SAT 2011 competition and SAT Challenge 2012. We use versions 2.0 and 2.2.
    \end{itemize}
  \item \lingeling{} family:
    \begin{itemize}
    \item \picosat{} (\cite{Biere2008picosat}), a conflict-driven solver using an aggressive restart strategy, compact data-structures, and offering proof-trace options to allow for unsatisfiability checking. This solver won gold and silver at the SAT competition 2007. We use the latest version 913.
    \item \precosat{} (\cite{Biere2009Precosat}), integrates the \texttt{SATeLite} preprocessor into \picosat{}, applying various reductions including partial $r_2$ at certain nodes in the search tree. This solver won gold and silver at the SAT 2009 competition. We use the latest version 570.
    \item \lingeling (\cite{Biere2012LingelingDesc}), based on \precosat, focuses further on integrating preprocessing and search, introducing new algorithms and data-structures to speed up these techniques and reduce memory footprint. As with \precosat, this solver applies partial $r_2$ at specially chosen nodes in the search tree. This solver won bronze at the SAT 2011 competition and silver at the SAT Race 2010. We use the latest version \texttt{ala-b02aa1a-121013}.
    \end{itemize}
  \end{itemize}
\end{itemize}

\subsubsection{Cube-and-Conquer}
\label{sec:cubeconq}

The \cubeconq{} method uses a look-ahead solver as the ``cube-solver'', splitting the instance into subinstances small enough such that the ``conquer-solver'', a conflict-driven solver, can solve almost all sub-instances in at most a few seconds. We use the \oksolver{} as the cube-solver and \minisat{} as the conquer-solver. The main (and single) parameter is $D \in \NNZ$, the cut-off depth for the \oksolver: the DLL-tree created by the \oksolver{} is cut off when the number of assignments reaches $D$, where it is important that this includes \emph{all assignments} on the path, not just the decisions, but also the forced assignments found by $r_1$ and $r_2$ --- only in this way a relatively balanced load is guaranteed. The data reported in Tables \ref{tab:CCvdW}, \ref{tab:CCpdvdW} shows first data on the cube-phase, namely
\begin{itemize}
\item $D$ (cut-off depth),
\item the number of nodes in the (truncated) DLL-tree of the \oksolver,
\item the time needed (this includes writing the partial assignments representing the sub-instances to files),
\item and the number $N$ of sub-instances.
\end{itemize}
For the conquer-phase we have:
\begin{itemize}
\item the median and maximum time for solving the sub-instances by \minisat{},
\item the sum of conflicts over all sub-instances,
\item and the total time used by \minisat.
\end{itemize}
Finally the overall total time is reported, which does not include the time used by the processing-script, which applies the partial assignments to the original instance and produces so the sub-instances: this adds an overhead of nearly $20\%$ for the smallest problem, but this proportion becomes smaller for larger problems, and is less than $1\%$ for the largest problems.

\subsubsection{VdW-problems}
\label{sec:remsatcompvdw}

We consider the (unsatisfiable) instances to determine the upper bounds for $\vdw(2; 3,t)$ with $12 \le t \le 17$; in Table \ref{tab:vdwdata} we give basic data for these instances (plus $t=18,19$).

\begin{table}[H]
  \centering
  \begin{tabular}{c|c||c|c|c|c}
    $t$ & $n$ & $c$ & $c_3$ & $c_t$ & $\ell$\\
    \hline
    12 & 135 & 5,251 & 4,489 & 762 & 22,611\\
    13 & 160 & 7,308 & 6,320 & 988 & 31,804\\
    14 & 186 & 9,795 & 8,556 & 1,239 & 43,014\\
    15 & 218 & 13,362 & 11,772 & 1,590 & 59,166\\
    16 & 238 & 15,812 & 14,042 & 1,770 & 70,446\\
    17 & 279 & 21,616 & 19,321 & 2,295 & 96,978\\
    18 & 312 & 26,889 & 24,180 & 2,709 & 121,302\\
    19 & 349 & 33,487 & 30,276 & 3,211 & 151,837
  \end{tabular}
  \caption{Instance data for $F(3,t;n)$, where $n$ is the number of vertices as well as the number of variables, $c=c_3+c_t$ is the number of clauses, $c_i$ the number of clauses of length $i$, and $\ell = 3 c_3 + t c_t$ is the number of literal occurrences.}
  \label{tab:vdwdata}
\end{table}

In Table \ref{tab:complsolvervdw}, we see the running times and number of nodes/conflicts for the SAT solvers. We see that in general look-ahead solvers here have the upper hand over conflict-driven solvers, with the \ntawsolver{} with a large margin the fastest solver. Regarding conflict-driven solvers, we see that version 2.2 for \minisat{} is superior over version 2.0, while for \glucose{} it is the opposite. The low node-count for \march{} seems due to the preprocessing phase, which adds a large number of resolvents to the original instance: this reduces the node-count, but increases the run-time. Compared to the other look-ahead solvers, the strength of \ntawsolver{} is that the number of nodes is just larger by a factor of most $3$, while processing of each node happens much faster. Compared with the strongest conflict-driven solver, \minisat-2.2, we see that the node-count of \ntawsolver{} is considerably less than the number of conflicts used by \minisat, and that one node is processed somewhat faster than one conflict.

One aspect important here for the superiority of look-ahead solver is the ``tightness'' of the problem formulation. Consider for example $t=12$, not with $n=135$ as in Tables \ref{tab:vdwdata}, \ref{tab:complsolvervdw}, but with $n=1000$; this yields $c = 294{,}455$, $c_3 = 249{,}500$, $c_{12} = 44{,}955$, and $\ell = 1{,}287{,}960$, which is now a highly redundant problem instance. For \ntawsolver{} we obtain 1,311,511 nodes and 2,868 sec, and for \ttawsolver{} we get 935,475 nodes and 2,452 sec, while for \minisat-2.2 we get 1,140,616 conflicts and 159 sec. We see that \minisat-2.2 was able to utilise the additional clauses to determine unsatisfiability with fewer conflicts, and with a run-time not much affected by the large increase in problem size, while for \ntawsolver{} the run-time (naturally) explodes, and the number of nodes stayed the same.\footnote{The \oksolver{} yields a more extreme example: the run was aborted after 657,648 sec and 603,177 nodes, where yet only $\%49.2$ of the search space was visited (so that the solver was still working on completing the first branch at the root of the tree, making very slow progress towards $\%50$). This shows the big overhead caused by the $r_2$-reduction, and the danger of a heuristic which (numerically) sees opportunities ``all over the place'', and thus can not focus on one relevant part of the input.} If we consider a typical branching-heuristics for look-ahead solvers (as discussed in Subsection \ref{sec:tawsolver1020}), then we see that locality of the search process is not taken into consideration, and thus for non-tight problem formulations the solver can ``switch attention'' again and again. This is very different from heuristics for conflict-driven solvers, which via ``clause-activity'' have a strong focus on locality of reasoning. Furthermore, look-ahead solvers consider much more of the whole input, for example the \tawsolver{} considers always all remaining variables and their occurrences for the branching heuristic, while conflict-driven solvers do not use such global heuristics.

\begin{table}[H]
  \centering
  \begin{tabular}{c||c|c|c|c|c|c}
    $t=$ & $12$ & $13$ & $14$ & $15$ & $16$ & $17$\\
    \hline\hline
    \ntawsolver & 11 & 83 & 673 & 5,010 & 42,356 & 401,940\\
    & 961,949 & 5,638,667 & 35,085,795 & 194,035,915 & 1,462,429,351 & 10,258,378,909\\
    \hline
    \ttawsolver & 19 & 143 & 1,068 & 7,607 & 59,585\\
    & 953,179 & 5,869,055 & 35,668,687 & 200,208,507 & 1,479,620,647\\
    \hline
    \otawsolver & 47 & 463 & 4,577 & 47,006 & 532,416 \\
    & 1,790,733 & 13,722,975 & 102,268,511 & 774,872,707 & 8,120,609,615\\
    \hline
    \satz{} & 77 & 711 & 6,233 & 54,913 & 562,161\\
    & 262,304 & 1,698,185 & 10,822,316 & 66,595,028 & 599,520,428\\
    \hline
    \march & 185 & 1,849 & 17,018 & 175,614\\
    & 47,963 & 279,061 & 1,975,338 & 11,959,263 \\
    \hline
    \oksolver & 216 & 3,806 & 47,598\\
    & 281,381 & 2,970,723 & 22,470,241\\
    \hline\hline
    \minisat-2.2 & 107 & 1,716 & 16,836 & 190,211\\
    & 5,963,349 & 63,901,998 & 463,984,635 & 3,205,639,994\\
    \hline
    \minisat-2.0 & 273 & 3,022 & 33,391 & 274,457\\
    & 1,454,696 & 9,298,288 & 60,091,581 & 314,678,660\\
    \hline
    \precosat & 211 & 2,777 & 47,624\\
    & 2,425,722 & 16,978,254 & 140,816,236\\
    \hline
    \picosat & 259 & 4,258 & 48,372\\
    & 9,643,671 & 82,811,468 & 576,692,221\\
    \hline
    \glucose-2.0 & 58 & 781 & 84,334\\
    & 1,263,087 & 8,377,487 & 163,500,051\\
    \hline
    \lingeling & 519 & 7,651 & 107,243\\
    & 1,659,607 & 24,124,525 & 176,909,499\\
    \hline
    \cryptomini & 212 & 4,630 & 141,636\\
    & 2,109,106 & 18,137,202 & 205,583,043\\
    \hline
    \glucose-2.2 & 94 & 1,412 & $>$940,040\\
    & 1,444,017 & 10,447,051 & aborted\\
  \end{tabular}
  \caption{Complete solvers on unsatisfiable instances $F(3,t;n)$ for computing $\vdw(2;3,t)$ (with $t=12,\dots,16$ and $n=135, 160, 186, 218, 238$). The first line is run-time in seconds, the second line is the number of nodes for look-ahead solvers resp.\ number of conflicts for conflict-driven solvers.}
  \label{tab:complsolvervdw}
\end{table}

Finally we consider \cubeconq, with the \oksolver{} as Cube-solver and \minisat-2.2 as Conquer-solver, in Table \ref{tab:CCvdW}. We see that the combination is vastly superior to each of the two solvers involved, and approaches in performance the best solver, the \ntawsolver{} (but still slower by a factor of two).

\begin{table}[H]
  \centering
  \begin{tabular}{c||c|c|c|c|c}
    $t=$ & $13$ & $14$ & $15$ & $16$ & $17$\\
    \hline\hline
    $D$ & 20 & 30 & 35 & 40 & 50\\
    nds & 3,197 & 27,053 & 64,663 & 209,593 & 1,399,505\\
    t & 10 & 146 & 821 & 3,248 & 23,546\\
    N & 1599 & 13,527 & 32,331 & 104,797 & 699,751\\
    \hline
    $t$: med, max & $0.06$, $0.49$ & $0.06$, $0.68$ & $0.16$, $3.9$ & $0.46$, $29.6$ & $0.8$, $199$\\
    $\Sigma$ cfs & 8,479,987 & 59,402,586 & 361,511,501 & 3,723,995,162 & 35,931,491,146\\
    $\Sigma$ t & 120 & 961 & 6,888 & 80,056 & 1,006,718\\
    \hline
    total $t$ & 130 & 1,107 & 7,709 & 83,304 & 1,030,264\\
    \hline
    factor & $13.2$ & $15.2$ & $24.7$ & NA & NA
  \end{tabular}
  \caption{\cubeconq, via the \oksolver{} as the cube-solver, and \minisat-2.2 as the conquer-solver. Times are in seconds. ``factor'' is run-time of \minisat-2.2, divided by total time of \cubeconq. The run-times of the \oksolver{} includes writing all data-files (the partial assignments), the run-times of \minisat{} include reading the files. $10^6$ seconds are roughly $11.6$ days.}
  \label{tab:CCvdW}
\end{table}

\subsubsection{Palindromic vdW-problems}
\label{sec:remsatcomppdvdw}

The data for the palindromic problems we considered is shown in Table \ref{tab:pdvdwdata}. Recall that for palindromic problems, that is, the determination of $\vdwpd(2;3,t) = (n_1,n_2)$, we have to determine two numbers: the $n_1$ such that all $\Fpd(3,t,n)$ with $n \le n_1$ are satisfiable, while $\Fpd(3,t,n_1+1)$ is unsatisfiable, and $n_2$ for which $\Fpd(3,t,n)$ is unsatisfiable for all $n \ge n_2$, while $\Fpd(3,t,n_2-1)$ is satisfiable. In order to do so, as shown in Theorem \ref{thm:certificatespd}, the main unsatisfiable instances are for $n_1$ and $n_2+1$. To reduce the amount of data, we don't show the data for these two critical points, but for $n_2$, which is easier than $n_2+1$ (in our range by a factor of around five; possible due to the fact that except of one case $n_2$ happens to be odd here, as discussed in the next paragraph), and harder than $n_1$.

For $\Fpd(3,t,n)$ with odd $n$ we can determine that the middle vertex $\frac{n+1}2$ can not be element of the first block of the partition (belonging to progression-size $3$), since then no other vertex could be in the first block (due to the palindromic property and the symmetric position of the middle vertex), and then we would have an arithmetic progression of size $t$ in the second block. Due to this (and there might be other reasons), palindromic problems for odd $n$ are easier (running times can go up by a factor of $10$ for even $n$).\footnote{We remark that while for example \precosat{} determines this forced variable right at the beginning, this is not the case for the \minisat{} versions, which infer that fact rather late, and they are helped by adding the corresponding unit-clause to the instance.}

\begin{table}[H]
  \centering
  \begin{tabular}{c|c||c|c|c|lllll}
    $t$ & $n$ & $v$ & $c$ & $\ell$ & $c_2$ & $c_3$ & $c_{\ceil{t/2}}$ & $c_{\ceil{t/2}+1}$ & $c_t$\\
    \hline
    17 & 279 & 140 & 10,536 & 45,139 & 185 & 9,357 & 25 & 0 & 969\\
    18 & 312 & 156 & 13,277 & 58,763 & 52 & 11,954 & 9 & 0 & 1,262\\
    19 & 347 & 174 & 16,208 & 70,414 & 230 & 14,586 & 28 & 0 & 1,364\\
    20 & 389 & 195 & 20,327 & 88,944 & 258 & 18,393 & 10 & 19 & 1,647\\
    21 & 405 & 203 & 21,950 & 96,305 & 269 & 19,958 & 29 & 0 & 1,694\\
    22 & 463 & 232 & 28,650 & 126,560 & 308 & 26,171 & 11 & 21 & 2,139\\
    23 & 507 & 254 & 34,289 & 152,236 & 337 & 31,448 & 34 & 0 & 2,470\\
    24 & 593 & 297 & 46,881 & 209,792 & 394 & 43,156 & 12 & 24 & 3,295\\
    25 & 607 & 304 & 48,979 & 219,525 & 404 & 45,237 & 37 & 0 & 3,301\\
    26 & 643 & 322 & 54,843 & 246,503 & 428 & 50,813 & 12 & 24 & 3,566\\
    27 & 699 & 350 & 64,719 & 292,102 & 465 & 60,133 & 38 & 0 & 4,083
  \end{tabular}
  \caption{Instance data for $\Fpd(3,t,n)$, where $v$ is the number of variables, $c=c_2 + c_3 + c_{\ceil{t/2}} + c_{\ceil{t/2}+1} + c_t$ is the number of clauses, $c_i$ the number of clauses of length $i$, and $\ell$ is the number of literal occurrences.}
  \label{tab:pdvdwdata}
\end{table}

First we consider the look-ahead solvers in Table \ref{tab:pdvdwlasolvers}. Comparing \tawsolver{} with the other solvers, we see a similar behaviour as with (ordinary) vdW-problems, but more extreme so. The node-count of \ntawsolver{} and \ttawsolver{} is not much worse than the ``real'' look-ahead solvers, with exception of \march{} (where again a large number of inferred clauses is added by the solver). The weak performance of the \oksolver{} is (likely) explained by the instances not having many $r_2$-reductions (recall that \oksolver{} is \emph{completely} eliminating failed literals, as the only solver), and so the overhead is prohibitive (the savings in node-count don't pay off). \satz{} only investigates $\%10$ of the most promising variables for $r_2$-reductions, and additionally looks for some $r_3$-reductions. This strategy here works far better than \oksolver's ``strategy'' (but the \oksolver{} deliberately doesn't employ a ``strategy'' here, since the aim is to have a stable and ``mathematical meaningful'' solver); nevertheless still the overhead is too large.

An interesting aspect is that for larger $t$ the more complex heuristic (i.e., projection) of \ttawsolver{} compared to \ntawsolver{} pays off. This is different from ordinary vdW-problems. And as the comparison with \otawsolver{} shows, the heuristic (mostly the projection) is of great importance here (more pronounced than for ordinary vdW-problems).

\begin{table}[H]
  \centering
  \begin{tabular}{c||c|c|c|c|c|c}
    $t$ & \ttawsolver & \ntawsolver & \satz & \otawsolver &   \march & \oksolver{}\\
    \hline\hline
    17 & 1 & 0.8 & 12 & 7 & 35 & 18\\
    & 32,855 & 32,697 & 16,466 & 143,319 & 1,448 & 5,023\\
    \hline
    18 & 11 & 8 & 182 & 60 & 269 & 335\\
    & 276,249 & 279,309 & 208,873 & 1,063,979 & 12,289 & 100,803\\
    \hline
    19 & 13 & 10 & 143 & 134 & 500 & 322\\
    & 283,229 & 285,037 & 123,199 & 2,009,635 & 12,423 & 62,009\\
    \hline
    20 & 48 & 39 & 701 & 738 & 1,980 & 1,419\\
    & 894,777 & 897,529 & 459,899 & 9,076,261 & 39,681 & 206,617\\
    \hline
    21 & 115 & 101 & 2,592 & 2,541 & 5,053 & 3,536\\
    & 2,144,743 & 2,239,371 & 1,567,736 & 30,470,349 & 99,493 & 490,841\\
    \hline
    22 & 564 & 525 & 9,418 & 18,306 & 25,841 & 47,593\\
    & 8,427,503 & 8,683,035 & 4,393,139 & 170,414,771 & 376,285 & 3,197,173\\
    \hline
    23 & 1,547 & 1,695 & 35,633 & 86,869 & 77,763 & 132,150\\
    & 19,858,971 & 21,565,129 & 12,587,868 & 573,190,251 & 876,315 & 7,461,907\\
    \hline
    24 & 8,558 & 26,724\\
    & 79,790,419 & 198,685,857\\
    \hline
    25 & 22,841\\
    & 219,575,127
  \end{tabular}
  \caption{Look-ahead solvers on unsatisfiable instances $\Fpd(3,t;n)$ for computing $\vdw(2;3,t)$ (with $t=17,\dots,25$ and $n=279, \dots, 607$). The first line is run-time in seconds, the second line is the number of nodes.}
  \label{tab:pdvdwlasolvers}
\end{table}

The conflict-driven solvers are shown in Table \ref{tab:pdvdwcdsolvers}. We see that they are not competitive with \ntawsolver{} or \ttawsolver, however now most of them are better than the ``real'' look-ahead solvers. Here \minisat-2.2 is better than \minisat-2.0, and \glucose-2.2 is better than \glucose-2.0, so we show only data for the newest versions. With \glucose{} we see a pattern which we observed also at other (hard) instance classes: for smaller instances \glucose{} is better than \minisat{}, but from a certain point on the performance of \glucose{} becomes very bad. This is likely due to the more aggressive restart strategy, which pays off for smaller instances, but from a certain point on the solver becomes essentially incomplete.

\begin{table}[H]
  \centering
  \begin{tabular}{c||c|c|c|c|c}
    $t$ & \minisat & \glucose & \precosat & \lingeling & \cryptomini\\
    \hline\hline
    17 & 0.8 & 0.8 & 1.2 & 3.7 & 3.6\\
    & 34,426 & 34,826 & 41,961 & 57,306 & 59,443\\
    \hline
    18 & 19 & 14 & 25 & 59 & 78\\
    & 607,908 & 340,568 & 506,793 & 919,123 & 871,916\\
    \hline
    19 & 19 & 15 & 24 & 61 & 72\\
    & 568,924 & 336,861 & 485,357 & 915,107 & 765,301\\
    \hline
    20 & 118 & 66 & 131 & 355 & 384\\
    & 2,852,150 & 1,132,012 & 1,799,145 & 3,633,502 & 3,071,462\\
    \hline
    21 & 423 & 228 & 445 & 1,060 & 1,418\\
    & 9,179,642 & 2,903,573 & 4,687,589 & 8,672,073 & 8,458,496\\
    \hline
    22 & 3,151 & 1,631 & 2,825 & 8,428 & 14,321\\
    & 51,582,064 & 13,397,451 & 22,283,651 & 41,696,062 & 49,716,762\\
    \hline
    23 & 8,191 & 6,817 & 9,280 & 28,543 & 55,544\\
    & 108,028,217 & 36,314,064 & 54,951,563 & 104,007,799 & 141,249,316\\
    \hline
    24 & 54,678 & $>$ 992,540 & 82,750 & 152,076\\
    & 476,716,936 & $>$ 1,100,664,795 & 261,084,988 & 285,546,948 & \\
    && aborted
  \end{tabular}
  \caption{Conflict-driven solvers on unsatisfiable instances $\Fpd(3,t;n)$ for computing $\vdw(2;3,t)$ (with $t=17,\dots,24$ and $n=279, \dots, 593$). The first line is run-time in seconds, the second line is the number of conflicts.}
  \label{tab:pdvdwcdsolvers}
\end{table}

Finally we consider \cubeconq{} in Table \ref{tab:CCpdvdW}. We see that this is now the fastest solver overall. \glucose-2.2 is $\%10$ faster, but since this is only a small amount, for consistency we stick with \minisat-2.2.

\begin{table}[H]
  \centering
  \begin{tabular}{c||c|c|c|c|c}
    $t=$ & $23$ & $24$ & $25$ & $26$ & $27$\\
    \hline\hline
    $D$ & 25 & 35 & 45 & 55 & 65\\
    nds & 1,717 & 5,559 & 17,633 & 77,161 & 220,069\\
    t & 106 & 500 & 1,752 & 7,889 & 25,478\\
    N & 859 & 2,780 & 8,817 & 38,581 & 110,032\\
    \hline
    $t$: med, max & $0.95$, $17.6$ & $1.2$, $27$ & $0.81$, $47$ & $0.95$, $58$ & $0.82$, $125$\\
    $\Sigma$ cfs & 27,308,572 & 93,831,664 & 258,829,555 & 1,231,383,588 & 3,423,841,749\\
    $\Sigma$ t & 1,095 & 4,466 & 11,822 & 55,306 & 172,033\\
    \hline
    total $t$ & 1,201 & 4,966 & 13,574 & 63,195 & 197,511\\
    \hline
    factor & $1.3$ & $1.7$ & $1.7$ & NA & NA
  \end{tabular}
  \caption{\cubeconq, via the \oksolver{} as the cube-solver, and \minisat-2.2 as the conquer-solver. Times are in seconds. ``factor'' is run-time of best solver, i.e., \ttawsolver, divided by total time of \cubeconq. $10^5$ seconds are roughly $1.2$ days.}
  \label{tab:CCpdvdW}
\end{table}

\subsection{Incomplete solvers (stochastic local search)}
\label{sec:remsatincomp}

In the \OKlibrary{} we use the \texttt{Ubcsat} suite (see \cite{ubcsat}) of local-search algorithms in version 1-2-0. The considered algorithms are GSAT, GWSAT, GSAT-TABU, HSAT, HWSAT, WALKSAT, WALKSAT-TABU, WALKSAT-TABU-NoNull, Novelty, Novelty+, Novelty++, Novelty+p, Adaptive Novelty+, RNovelty, RNovelty+, SAPS, RSAPS, SAPS/NR, PAWS, DDFW, G2WSAT, Adaptive G2WSat, VW1, VW2, RoTS, IRoTS, SAMD. The performance of local-search algorithms is very much instance-dependent, and so a good choice of algorithms is essential. Our experiments yield the following selection criteria:
\begin{itemize}
\item For standard problems (Section \ref{sec-comp}) the best advice seems to use GSAT-TABU for $t \leq 23$, to use RoTS for $t > 23$, and to use Adaptive G2WSat for $t > 33$ (also trying DDFW then).
\item For the palindromic problems (Section \ref{sec:Palindromes}) GSAT-TABU is the best algorithm.
\end{itemize}
For a given $t$ in principle we let these algorithm run for $n=t+1,t+2, \dots, $, until the search seems unable to find a solution. But running these algorithms from scratch on these vdW-problems is much less effective than using an incremental approach, based on a solution found for $n-1$, respectively for palindromic vdW-problems on a solution found for $n-2$ (according to Lemma \ref{lem:transferpdpartitions}), as initial guess, and repeating this process for the next $n$: this helps to go much quicker through the easier part of the search space (of possible $n$), and also seems to help for the harder problems. Finally, we recall that in Subsection \ref{sec:Somenewconjectures} we explained how we made the distinction between lower bounds we conjecture to be exact and sheer lower bounds.

\section{Conclusion}
\label{sec:Conclusion}

This article presented the following contributions to the fields of Ramsey theory and SAT solving:
\begin{itemize}
\item Study of $\vdw(2;3,t)$:
  \begin{enumerate}
  \item determination of $\vdw(2;3,19) = 349$;
  \item lower bounds for $\vdw(2;3,t)$ with $20 \leq t \leq 30$, conjectured to be exact;
  \item further lower bounds for $31 \leq t \leq 39$;
  \item improved conjecture on the growth rate of $\vdw(2;3,t)$;
  \item various observations on structural properties of good partitions.
  \end{enumerate}
\item Introduction and study of $\vdwpd(2;3,t)$:
  \begin{enumerate}
  \item basic definitions and properties;
  \item determination of $\vdwpd(2;3,t)$ for $t \leq 27$;
  \item lower bounds for $\vdwpd(2;3,t)$ with $28 \leq t \leq 35$, conjectured to be exact;
  \item further lower bounds for $36 \leq t \leq 39$.
  \end{enumerate}
\item SAT solving:
  \begin{enumerate}
  \item introduced the new SAT-solver \tawsolver, with the basic implementation given by \otawsolver, and the versions with improved heuristic by \ntawsolver{} and \ttawsolver;
  \item experimental comparison with current look-ahead and conflict-driven solvers;
  \item comparison and data for the new \cubeconq{} method;
  \item experimental determination of good local-search algorithms for lower bounds.
  \end{enumerate}
\end{itemize}
We hope that these investigations contribute to a better understanding of the connections between Ramsey theory and SAT solving. The following seem relevant research directions for future investigations:
\begin{itemize}
\item Showing $\vdw(2;3,20) = 389$ (recall Subsection \ref{sec:Somenewconjectures}) should be in reach with \ntawsolver, while showing $\vdw(2;3,21) = 416$ seems to require new (algorithmic) insight (when using similar computational resources).
\item Conjecture \ref{conj-1} states that the lower bound from \cite{blr2008} for $\vdw(2;3,t)$ is tight up to a small factor.
\item In Section \ref{sec-pat} four conjectures on patterns in good partitions are presented (one implying Conjecture \ref{conj-1}).
\item In Subsection \ref{sec:pdopen} various open problems on palindromic van der Waerden numbers are stated.
\item Considering SAT solving:
  \begin{enumerate}
  \item Understand the differences between ordinary and palindromic problems:
    \begin{itemize}
    \item Why is the projection relatively more important for the palindromic problems? (So that the difference between \ntawsolver{} and \otawsolver{} is more pronounced, and \ttawsolver{} becomes faster than \ntawsolver{} on bigger instances.)
    \item Why do we have different behaviour of look-ahead versus conflict-driven solvers?
    \end{itemize}
  \item Can the branching heuristic of \tawsolver{} for the instances of this paper be much further improved? Especially can we gain some understanding of the weights?
  \item How to understand the success of \cubeconq{} ? Does its success indicate that there are important dag-like structures in good resolution refutations of the instances of these classes, which are dispersed locally, so that ordinary conflict-driven solvers have problems exploiting them?
  \end{enumerate}
\end{itemize}

\section*{Acknowledgements}
The authors would like to thank Donald Knuth, the Editor and the anonymous referees for their valuable suggestions and helpful comments.

\bibliographystyle{plain}


\begin{appendix}

\section{Certificates}
\label{sec:Certificates}

\subsection{Conjectured precise lower bounds for $\vdw(2;3,t)$}
\label{sec:Certificates3tp}

The certificate for $t=20$ is also palindromic (while for $t=24$ a palindromic certificate is given in Subsection \ref{sec:goodpp}).

\paragraph{$\vdw(2; 3, 20)\geq 389$}

$$1^{19}01^{11}01^{4}01^{7}0101^{4}01^{13}01^{9}01^{4}0101^{14}0^{2}1^{3}0^{2}101^{9}01^{18}01^{9}01^{4}0101^{5}01^{3}01^{10}01^{16}01^{8}01^{16}01^{10}01^{3}01^{5}0101^{4}01^{9}$$
$$01^{18}01^{9}010^{2}1^{3}0^{2}1^{14}0101^{4}01^{9}01^{13}01^{4}0101^{7}01^{4}01^{11}01^{19}$$

\paragraph{$\vdw(2; 3, 21)\geq 416$}

$$1^{8}01^{17}0101^{6}0101^{9}01^{12}0^{2}1^{19}01^{18}01^{3}0101^{2}01^{12}0^{2}1^{5}0^{2}101^{10}01^{18}01^{3}010^{2}1^{8}01^{12}01^{20}01^{18}01^{3}0101^{7}01^{8}01^{12}0^{2}$$
$$101^{17}01^{18}01^{3}010^{2}1^{8}01^{5}01^{6}0^{2}1^{12}01^{6}01^{15}0101^{4}0101^{11}01^{20}$$

\paragraph{$\vdw(2; 3, 22)\geq 464$}

$$1^{2}0^{2}1^{17}0^{2}1^{9}01^{12}0101^{12}01^{15}01^{2}01^{10}01^{4}01^{7}01^{5}01^{12}01^{9}010
1^{3}01^{8}01^{3}0101^{9}01^{12}01^{5}01^{7}01^{4}01^{10}01^{2}01^{15}01^{12}$$
$$0101^{12}01^{9}0^{2}1^{17}0^{2}1^{9}01^{12}0101^{12}01^{15}01^{2}01^{10}01^{4}01^{7}01^{5}0
1^{12}01^{9}0101^{3}01^{8}01^{5}01^{9}01^{12}01^{5}01^{7}01^{11}$$

\paragraph{$\vdw(2; 3, 23)\geq 516$}

$$1^{4}0^{2}1^{2}01^{17}01^{4}0101^{15}01^{16}01^{4}01^{5}01^{20}0101^{2}01^{8}0^{2}101^{4}0
1^{15}01^{2}01^{4}01^{16}01^{9}01^{10}0101^{9}01^{10}01^{7}01^{17}01^{6}0101^{19}01^{16}0$$
$$1^{21}0^{2}1^{19}01^{6}01^{2}01^{12}010^{2}1^{4}0101^{20}01^{13}0^{2}1^{11}01^{9}0^{2}1^{6}0
1^{4}01^{13}0101^{3}01^{8}01^{9}01^{20}01^{5}01^{18}01^{3}01$$

\paragraph{$\vdw(2; 3, 24)\geq 593$}

$$1^{21}01^{18}01^{16}01^{4}01^{7}01^{6}0101^{14}01^{3}0^{2}1^{8}01^{7}01^{3}01^{2}01^{20}0^{2}
1^{3}01^{7}01^{15}01^{7}01^{3}0^{2}1^{20}01^{2}01^{3}01^{7}01^{9}01^{18}0101^{6}01^{21}01^{7}0$$
$$1^{10}01^{7}01^{21}01^{6}0101^{18}01^{9}01^{7}01^{3}01^{2}01^{20}0^{2}1^{3}01^{7}01^{15}01^{7}0
1^{3}0^{2}1^{20}01^{2}01^{3}01^{7}01^{8}0^{2}1^{3}01^{14}0101^{6}01^{7}01^{4}01^{16}01^{18}01^{21}$$

\paragraph{$\vdw(2; 3, 25)\geq 656$}
$$1^{16}01^{2}01^{19}01^{8}01^{7}01^{19}01^{8}01^{4}0101^{5}01^{17}01^{10}01^{21}0^{2}
1^{2}0101^{10}01^{7}01^{12}01^{4}01^{3}01^{6}01^{7}01^{11}01^{3}01^{13}01^{4}01^{17}0101^{3}$$
$$01^{6}0^{2}1^{6}01^{17}01^{8}01^{7}01^{13}01^{14}01^{2}01^{4}01^{13}0^{2}1^{8}01^{7}0
1^{19}01^{8}01^{4}0101^{7}01^{15}01^{10}01^{9}01^{11}0^{2}1^{2}0101^{10}01^{5}01^{14}0$$
$$1^{4}01^{3}01^{6}01^{7}01^{11}01^{3}01^{13}01^{4}01^{17}0101^{3}01^{6}0^{2}1^{24}01^{8}01^{9}$$

\paragraph{$\vdw(2; 3, 26)\geq 727$}
$$1^{10}01^{23}01^{10}0101^{20}01^{4}01^{11}01^{6}01^{11}0^{2}1^{2}01^{5}01^{6}01^{5}01^{23}0
1^{4}0101^{7}01^{17}01^{16}0101^{11}0^{2}1^{2}0101^{3}01^{4}01^{2}01^{18}$$
$$01^{3}01^{5}01^{14}01^{12}01^{16}01^{4}01^{19}01^{8}010^{2}1^{4}01^{13}01^{14}0101^{20}0
1^{4}01^{18}01^{11}0^{2}1^{2}01^{5}01^{6}01^{5}01^{23}01^{4}0101^{7}01^{15}0101^{16}$$
$$0101^{11}0^{2}1^{2}0101^{3}01^{4}01^{2}01^{18}01^{9}01^{14}01^{12}0
1^{16}01^{4}01^{19}01^{8}01^{2}01^{18}01^{3}01^{25}$$

\paragraph{$\vdw(2; 3, 27)\geq 770$}
$$1^{24}01^{3}01^{5}01^{18}01^{17}0101^{2}01^{21}01^{3}01^{7}01^{2}01^{20}0^{2}1^{12}0
1^{15}01^{10}01^{11}01^{3}01^{9}01^{6}01^{13}01^{22}01^{3}0^{2}$$
$$1^{8}01^{5}01^{20}01^{6}0101^{16}01^{7}01^{3}01^{5}010^{2}101^{21}01^{2}0^{2}1^{14}01^{9}0
1^{17}0101^{2}01^{3}01^{17}01^{3}01^{5}0101^{2}01^{20}0^{2}1^{12}01^{15}01^{22}$$
$$01^{3}01^{9}01^{6}01^{13}01^{22}01^{3}0^{2}1^{8}01^{5}01^{20}01^{8}01^{16}01^{7}0
1^{3}01^{5}010^{2}101^{24}01^{6}01^{8}01^{20}01^{15}01^{16}01^{5}$$

\paragraph{$\vdw(2; 3, 28)\geq 827$}
$$1^{27}01^{10}01^{22}0101^{16}01^{13}01^{16}01^{20}01^{4}01^{16}0101^{11}0^{2}1^{2}010
1^{3}01^{6}0^{2}1^{18}01^{3}01^{5}01^{14}01^{12}01^{21}01^{14}0$$
$$1^{13}010^{2}1^{4}01^{23}01^{4}0101^{25}01^{18}01^{11}01^{3}0101^{3}01^{4}01^{7}0
1^{17}01^{10}0101^{7}01^{17}01^{11}01^{4}01^{13}0^{2}1^{2}0101^{3}01^{7}$$
$$01^{18}01^{3}01^{5}01^{14}01^{12}01^{16}01^{4}01^{14}01^{13}010^{2}1^{18}01^{9}01^{4}010
1^{25}01^{18}01^{11}01^{9}01^{4}0101^{23}01^{10}01^{9}01^{12}01^{16}01^{10}$$

\paragraph{$\vdw(2; 3, 29)\geq 868$}
$$1^{15}01^{21}01^{18}01^{17}01^{18}01^{21}01^{2}01^{7}01^{25}0101^{12}01^{17}01^{6}01^{7}0^{2}
1^{2}01^{17}01^{5}0^{2}101^{10}01^{6}01^{13}0101^{2}01^{17}01^{16}01^{7}01^{4}$$
$$01^{20}0101^{2}01^{5}01^{11}01^{25}01^{10}01^{25}01^{2}0^{2}1^{16}0
1^{3}01^{10}01^{4}01^{20}0101^{2}01^{6}01^{20}01^{14}0^{2}1^{2}01^{17}01^{5}0^{2}101^{10}01^{6}01^{13}0101^{2}$$
$$01^{9}01^{7}01^{9}01^{6}01^{7}0
1^{4}01^{20}0101^{2}01^{17}01^{25}01^{2}01^{7}01^{25}01^{3}01^{16}01^{6}01^{12}01^{25}01^{6}01^{27}01^{8}$$

\paragraph{$\vdw(2; 3, 30) \geq 903$}

$$1^{22}01^{16}01^{22}01^{26}01^{7}0101^{8}0101^{22}01^{12}01^{16}01^{7}01^{6}01^{9}01^{11}01^{6}0^{2}1^{15}0^{2}1^{13}01^{7}01^{8}0101^{23}01^{6}01^{28}01^{7}01^{3}01^{4}01^{22}$$
$$01^{8}0101^{2}0^{2}1^{11}0101^{22}01^{11}01^{14}01^{3}01^{19}01^{4}01^{16}0^{2}1^{11}0101^{5}01^{28}01^{6}0^{2}101^{10}01^{2}01^{14}01^{7}01^{10}01^{23}01^{6}01^{28}$$
$$01^{7}01^{5}01^{2}0^{2}1^{18}01^{2}01^{8}0101^{3}01^{11}0101^{16}01^{25}0101^{4}01^{23}01^{16}01^{4}0^{2}1^{13}01^{12}01^{3}01^{27}01^{10}01^{14}$$

\normalsize

\subsection{Further lower bounds for $\vdw(2;3,t)$}
\label{sec:Certificates3tf}

The certificate for $t=31$ is also palindromic.

\paragraph{$\vdw(2; 3, 31) > 930$}

\begin{gather*}
  1^{12}01^{19}01^{16}01^{10}01^{9}01^{3}01^{4}01^{14}01^{12}01^{14}01^{18}0^{2}1^{10}01^{4}01^{13}01^{8}01^{5}0101^{10}01^{17}01^{15}01^{16}01^{8}01^{5}01^{8}0101^{4}01^{8}01^{19}\\
  01^{16}01^{5}01^{18}01^{4}01^{25}01^{14}01^{16}01^{3}01^{9}01^{6}01^{13}01^{11}01^{2}01^{11}01^{13}01^{6}01^{9}01^{3}01^{16}01^{14}01^{25}01^{4}01^{18}01^{5}01^{16}01^{19}01^{8}\\
  01^{4}0101^{8}01^{5}01^{8}01^{16}01^{15}01^{17}01^{10}0101^{5}01^{8}01^{13}01^{4}01^{10}0^{2}1^{18}01^{14}01^{12}01^{14}01^{4}01^{3}01^{9}01^{10}01^{16}01^{19}01^{12}
\end{gather*}

\paragraph{$\vdw(2; 3, 32) > 1006$}

\begin{gather*}
  1^{15}01^{26}01^{5}01^{11}01^{4}01^{14}01^{4}01^{16}01^{28}01^{22}01^{2}01^{5}01^{7}01^{19}01^{2}0101^{16}01^{13}01^{5}01^{3}01^{7}01^{4}01^{13}01^{9}01^{14}01^{28}01^{7}01^{15}01^{10}\\
  010^{2}1^{18}0^{2}1^{2}01^{13}01^{6}01^{21}01^{27}01^{2}0101^{8}01^{7}01^{14}01^{13}01^{2}01^{24}01^{10}01^{5}0101^{17}01^{10}0^{2}1^{4}01^{28}01^{9}01^{11}0^{2}1^{8}01^{6}01^{7}0\\
  1^{11}01^{3}01^{10}010^{2}1^{28}01^{14}01^{17}01^{3}01^{12}01^{13}01^{5}01^{16}01^{13}01^{3}01^{10}01^{22}01^{2}01^{12}01^{7}01^{28}01^{20}01^{2}01^{5}01^{4}01^{22}01^{8}01^{28}
\end{gather*}

\paragraph{$\vdw(2; 3, 33) > 1063$}

\begin{gather*}
  1^{29}01^{6}01^{14}01^{11}01^{21}01^{14}01^{2}01^{18}0101^{15}01^{12}01^{25}01^{16}0101^{11}0^{2}1^{4}01^{10}01^{5}01^{13}01^{16}01^{20}01^{2}01^{13}01^{22}01^{5}01^{4}0101^{19}01^{9}0\\
  1^{4}01^{9}01^{15}0101^{18}01^{11}0^{2}1^{4}01^{3}01^{4}01^{2}01^{28}01^{6}01^{7}01^{29}01^{4}01^{19}01^{10}0^{2}1^{18}01^{14}0101^{5}01^{14}01^{16}01^{28}01^{6}01^{20}01^{8}01^{4}01^{7}\\
  01^{14}01^{4}01^{18}01^{11}01^{9}01^{4}01^{2}01^{4}01^{17}0101^{3}01^{14}01^{29}01^{18}0^{2}1^{2}01^{10}01^{2}01^{19}01^{2}01^{10}01^{27}01^{18}01^{12}01^{4}01^{20}01^{9}01^{24}01^{13}
\end{gather*}

\paragraph{$\vdw(2; 3, 34) > 1143$}

\begin{gather*}
  1^{32}01^{7}0101^{8}01^{29}01^{23}01^{3}0^{2}1^{10}01^{3}01^{17}01^{2}01^{9}01^{5}01^{16}01^{15}01^{20}0^{2}1^{2}01^{8}0^{2}101^{31}01^{18}01^{6}01^{12}01^{6}01^{17}01^{3}01^{7}0^{2}1^{26}01^{6}\\
  01^{2}0101^{24}0^{2}1^{7}01^{2}01^{3}01^{21}01^{24}01^{10}01^{11}01^{20}01^{10}01^{18}01^{6}0^{2}1^{28}01^{7}01^{3}01^{14}01^{27}010^{2}1^{20}0101^{4}01^{7}01^{2}01^{20}0^{2}1^{28}01^{7}01^{18}0\\
  1^{9}01^{10}01^{8}01^{28}01^{5}010^{2}101^{3}01^{27}01^{2}01^{5}01^{32}01^{24}01^{6}01^{25}01^{10}01^{2}01^{23}01^{2}0101^{9}01^{13}01^{10}01^{11}01^{20}01^{12}01^{16}01^{7}01^{3}01^{33}
\end{gather*}

\paragraph{$\vdw(2; 3, 35) > 1204$}

\begin{gather*}
  1^{34}01^{24}01^{8}01^{22}01^{3}0^{2}1^{10}01^{29}01^{7}01^{4}01^{14}01^{15}01^{7}01^{21}01^{6}01^{5}01^{13}01^{22}0101^{12}01^{2}01^{12}01^{7}01^{11}0^{2}1^{4}01^{3}01^{7}01^{18}01^{5}01^{3}01^{4}0\\
  1^{2}01^{6}01^{34}01^{19}01^{8}0101^{5}01^{17}01^{10}01^{2}01^{18}01^{3}01^{18}0101^{29}01^{18}01^{3}01^{32}0^{2}1^{2}01^{5}01^{27}01^{16}01^{24}01^{3}01^{12}0101^{17}01^{8}0101^{20}01^{13}01^{22}\\
  01^{8}01^{6}01^{7}01^{21}01^{12}01^{19}01^{11}01^{22}0101^{8}0101^{5}01^{13}01^{14}01^{9}01^{15}0101^{10}01^{7}01^{15}01^{13}01^{4}01^{13}01^{24}01^{12}01^{21}01^{23}01^{6}0^{2}1^{24}01^{34}
\end{gather*}

\paragraph{$\vdw(2; 3, 36) > 1257$}

\begin{gather*}
  1^{10}01^{33}01^{16}01^{12}01^{6}01^{11}01^{25}0101^{4}01^{5}01^{2}01^{12}01^{6}01^{29}0101^{4}01^{17}01^{26}01^{21}0101^{12}01^{3}0101^{2}0^{2}1^{13}01^{5}01^{16}01^{32}01^{2}01^{19}0^{2}10\\
  1^{4}01^{3}01^{25}01^{10}01^{32}01^{5}01^{2}01^{14}01^{7}01^{10}01^{17}01^{16}01^{4}01^{20}01^{16}01^{13}01^{5}01^{12}01^{3}01^{11}0101^{34}01^{9}01^{10}01^{30}01^{3}01^{5}01^{26}01^{9}01^{11}0\\
  1^{6}01^{2}01^{33}01^{2}01^{10}01^{2}01^{20}01^{6}01^{3}01^{24}01^{16}01^{19}0^{2}101^{4}01^{3}01^{17}01^{7}01^{16}01^{21}01^{26}0101^{5}01^{16}01^{26}01^{3}01^{4}0^{2}1^{13}01^{4}01^{2}01^{8}010\\
  1^{22}01^{16}01^{20}01^{28}0101^{33}
\end{gather*}

\paragraph{$\vdw(2; 3, 37) > 1338$}

\begin{gather*}
  1^{5}01^{33}01^{22}01^{2}01^{12}0^{2}1^{18}01^{14}01^{9}01^{12}01^{15}01^{24}01^{19}01^{17}01^{7}0^{2}1^{7}0101^{12}01^{2}01^{10}01^{22}01^{8}01^{32}01^{3}01^{6}01^{19}01^{14}01^{21}0^{2}1^{2}01^{12}\\
  01^{7}01^{26}01^{22}01^{2}0101^{3}01^{32}01^{3}01^{14}01^{11}01^{17}01^{18}0^{2}1^{2}0101^{3}01^{7}01^{22}01^{32}01^{22}0101^{28}0^{2}1^{4}01^{31}01^{6}01^{17}01^{3}01^{14}01^{21}01^{4}01^{7}\\
  01^{14}01^{35}01^{5}01^{16}01^{19}01^{3}01^{4}0101^{23}01^{10}01^{31}01^{12}0101^{11}01^{3}01^{12}01^{5}01^{14}01^{2}01^{10}01^{27}01^{30}0^{2}1^{2}0101^{8}01^{2}01^{4}01^{27}010^{2}1^{7}0\\
  1^{29}01^{4}01^{14}01^{16}01^{18}01^{14}01^{21}0^{2}1^{11}01^{5}
\end{gather*}

\paragraph{$\vdw(2; 3, 38) > 1378$}

\begin{gather*}
  1^{34}01^{14}01^{7}01^{13}01^{22}0101^{12}01^{23}01^{12}01^{13}010^{2}1^{28}01^{4}01^{2}01^{24}01^{10}01^{7}01^{29}01^{4}01^{19}01^{8}01^{2}01^{18}0^{2}1^{21}01^{14}01^{13}01^{7}01^{17}\\
  0^{2}1^{4}01^{26}01^{3}0101^{10}01^{20}01^{16}01^{18}01^{9}01^{12}0101^{21}01^{6}01^{37}01^{6}01^{15}01^{13}01^{6}01^{12}01^{8}01^{32}01^{4}01^{5}01^{19}01^{3}01^{10}01^{21}0^{2}1^{4}01^{18}\\
  01^{28}01^{37}01^{6}01^{11}01^{3}01^{10}01^{7}01^{13}0^{2}1^{4}01^{34}01^{5}01^{4}01^{25}01^{5}0^{2}1^{3}01^{27}01^{10}01^{5}01^{23}01^{4}01^{22}01^{8}01^{12}01^{16}01^{13}0^{2}1^{4}01^{20}0\\
  1^{10}01^{33}01^{7}0101^{14}01^{5}01^{27}01^{7}0101^{6}0^{2}1^{19}01^{16}01^{30}
\end{gather*}

\paragraph{$\vdw(2; 3, 39) > 1418$}

\begin{gather*}
  1^{2}01^{4}01^{13}01^{34}0101^{20}01^{21}01^{19}01^{7}01^{2}01^{36}01^{20}01^{2}0^{2}1^{8}01^{24}01^{12}01^{5}01^{27}01^{18}01^{11}01^{7}01^{6}01^{30}01^{5}01^{16}01^{4}01^{26}01^{4}\\
  0^{2}1^{28}01^{2}01^{4}01^{2}01^{6}01^{18}01^{26}01^{17}01^{3}01^{7}01^{16}01^{11}01^{5}01^{2}01^{22}01^{3}01^{6}01^{4}01^{36}01^{24}01^{8}01^{16}01^{6}01^{13}0101^{34}01^{26}01^{36}\\
  01^{6}01^{2}01^{5}01^{16}01^{15}01^{3}01^{9}01^{7}01^{11}01^{2}01^{24}01^{15}01^{27}0101^{2}01^{3}01^{9}01^{17}010^{2}1^{36}01^{22}0^{2}1^{11}01^{24}01^{28}01^{6}01^{29}0^{2}1^{5}0^{2}\\
  1^{3}01^{32}01^{2}0^{2}1^{5}01^{2}01^{15}01^{4}01^{7}01^{4}01^{29}01^{8}01^{22}01^{4}01^{32}01^{12}01^{30}
\end{gather*}

\normalsize

\subsection{Good palindromic partitions}
\label{sec:goodpp}

Table \ref{tab-pal} gives good palindromic partitions for $n_1-1, n_2 - 1$ with $(n_1,n_2) = \vdwpd(2;3,t)$, $3 \leq t \leq 39$, according to the values in Tables \ref{tab-pdprecise}, \ref{tab-pdconj}. Due to the palindromic property, for the corresponding $n$ we only show the partition of $\set{1,\dots,\ceil{\frac n2}}$, so that for example the good palindromic partition $01^2001^20$ for $t=3$ and $n=8$ is compressed to $01^20$. Note that such compressed partitions correspond exactly to the solution of $\Fpd(3,t,n)$ as defined in Lemma \ref{lem:pdsat}.

{
\scriptsize
\renewcommand{\arraystretch}{2}
\begin{longtable}[c]{|c|c|c|}
\caption{Good palindromic partitions for $\vdwpd(2; 3, t)-1$ according to Theorem \ref{thm:certificatespd}, Part (i)}
\label{tab-pal}\\
  \hline \hline
 $t$ & $\vdwpd(2;3,t)-1$ &{\tt Good palindromic partitions}\\
  \hline
\endhead
\hline
  \multicolumn{3}{|r|}{{Continued on Next Page\ldots}} \\
\hline
\endfoot
\endlastfoot
$3$ & $(5,8)$ & $1^{2}0, \ 10^{2}1$\\
\hline
$4$ & $(14,15)$ & $0101^{3}0, \ 1^{2}010^{2}1^{2}$\\
\hline
$5$ & $(15,20)$ & $1^{4}0^{2}1^{2}, \ 1^{2}0101^{4}0$\\
\hline
$6$ & $(29,30)$ & $1^{2}01^{5}0^{2}1^{3}01, \ 1^{5}0^{2}1^{5}0^{2}1$\\
\hline
$7$ & $(40,43)$ & $01^{4}0101^{3}0^{2}1^{5}01, \ 0^{2}1^{4}01^{2}01^{5}01^{4}01$\\
\hline
$8$ & $(51,56)$ & $1^{7}01^{2}01^{3}0^{2}101^{4}01^{3}, \ 1^{2}01^{2}01^{4}0101^{4}01^{3}01^{5}0$\\
\hline
$9$ & $(61,76)$ & $1^{4}01^{3}0^{2}1^{5}01^{2}01^{5}01^{4}01, \ 1^{8}0^{2}1^{6}01^{3}0101^{3}0^{2}1^{5}01^{4}$\\
\hline
$10$ & $(92,93)$ & $1^{9}01^{4}010^{2}1^{9}0^{2}10^{2}1^{9}01^{4}, \ 1^{9}01^{8}0^{2}1^{2}0^{2}1^{6}0^{2}1^{4}01^{8}01$\\
\hline
$11$ & $(109,112)$ & $1^{3}01^{3}01^{9}010^{2}1^{8}0^{2}1^{3}0101^{8}01^{4}01^{4}, \ 1^{2}0101^{3}0101^{4}01^{7}0^{2}1^{5}01^{3}01^{6}0101^{10}01$\\
\hline
$12$ & $(125,134)$ & $1^{11}0101^{8}010^{2}101^{10}01^{8}01^{5}01^{4}0101^{2}, \ 1^{9}01^{8}01^{9}01^{2}01^{3}0101^{7}01^{2}0101^{3}01^{11}0$\\
\hline
$13$ & $(141,154)$ & $101^{2}01^{11}01^{10}010^{2}1^{4}01^{10}01^{11}010^{2}1^{6}01^{2}, \ 1^{10}01^{4}01^{11}01^{4}01^{10}010^{2}1^{3}0^{2}101^{10}01^{4}01^{2}01^{4}$\\
\hline
$14$ & $(173,182)$ & $1^{2}01^{7}01^{6}01^{3}01^{7}01^{6}01^{10}0101^{10}01^{5}0101^{6}01^{2}0^{2}1^{7}, \ 1^{4}0^{2}1^{2}01^{8}0101^{7}01^{8}01^{5}01^{8}01^{7}0101^{8}01^{2}0^{2}1^{13}01^{2}$\\
\hline
$15$ & $199$ & $1^{5}01^{10}01^{7}0^{2}1^{4}01^{2}01^{5}01^{7}01^{8}01^{10}01^{3}01^{12}01^{5}01^{2}01^{4}01$\\[-1ex]
& $204$ & $1^{11}01^{6}01^{9}01^{2}01^{11}01^{3}01^{5}01^{2}0^{2}1^{2}01^{11}0101^{10}01^{13}01^{2}$\\
\hline
$16$ & $231$ & $1^{14}01^{9}01^{6}01^{5}01^{2}0^{2}1^{4}01^{12}01^{3}01^{7}01^{2}0^{2}1^{12}01^{8}01^{5}01^{10}01$\\[-1ex]
& $236$ & $1^{15}01^{9}01^{2}0^{2}1^{4}01^{9}010^{2}1^{9}01^{14}01^{9}0^{2}1^{11}01^{5}01^{2}01^{9}01^{3}$\\
\hline
$17$ & $255$ & $1^{14}01^{16}0101^{2}01^{16}0101^{5}01^{6}01^{3}01^{9}01^{2}01^{11}0101^{7}01^{6}01^{3}0^{2}1^{8}$\\[-1ex]
& $278$ & $1^{4}01^{6}01^{5}01^{3}01^{10}01^{6}01^{5}01^{8}0^{2}1^{10}0101^{6}01^{7}0101^{8}01^{12}0101^{16}01^{9}0^{2}1$\\
\hline
$18$ & $298$ & $1^{2}01^{2}01^{16}01^{6}0101^{16}0^{2}1^{4}01^{17}01^{2}01^{5}0101^{13}01^{2}01^{6}01^{2}01^{5}01^{12}0^{2}1^{10}01^{7}$\\[-1ex]
& $311$ & $1^{6}01^{8}01^{3}01^{17}0^{2}1^{3}0^{2}1^{12}01^{16}01^{7}01^{9}0^{2}10^{2}1^{4}01^{9}01^{7}01^{4}01^{14}01^{10}01^{6}$\\
\hline
$19$ & $337$ & $1^{11}01^{2}0101^{15}0^{2}1^{3}01^{2}01^{16}01^{17}0^{2}101^{8}010^{2}1^{17}01^{16}01^{2}01^{3}0^{2}101^{15}01^{2}01^{10}01^{3}$\\[-1ex]
& $346$ & $1^{11}01^{18}01^{5}0101^{6}01^{7}0101^{3}01^{2}01^{18}01^{9}0101^{2}0^{2}1^{14}01^{3}01^{2}0101^{12}01^{3}0101^{12}01^{13}01^{5}$\\
\hline
$20$ & $379$ & $1^{16}0^{2}101^{10}01^{18}01^{5}01^{6}0^{2}1^{2}01^{5}01^{15}01^{10}0101^{16}01^{3}0101^{16}01^{2}01^{9}0^{2}1^{19}0101^{2}01^{9}$\\[-1ex]
& $388$ & $1^{19}01^{11}01^{4}01^{7}0101^{4}01^{13}01^{9}01^{4}0101^{14}0^{2}1^{3}0^{2}101^{9}01^{18}01^{9}01^{4}0101^{5}01^{3}01^{10}01^{16}01^{4}$\\
\hline
$21$ & $399$ & $1^{6}0101^{6}0^{2}1^{11}01^{3}01^{4}01^{6}01^{10}01^{19}01^{7}01^{2}01^{3}01^{10}01^{6}0^{2}1^{4}01^{6}01^{2}01^{12}01^{16}0^{2}1^{19}0101^{5}01^{8}01^{7}$\\[-1ex]
& $404$ & $1^{18}0101^{5}0^{2}1^{3}01^{13}01^{9}0^{2}1^{19}0101^{8}0101^{16}01^{7}0101^{14}01^{8}0101^{20}0101^{3}01^{10}0101^{17}01$\\
\hline
$22$ & $443$ & $1^{3}01^{8}01^{7}01^{8}01^{19}01^{8}01^{7}0101^{12}01^{7}0101^{8}01^{16}0^{2}1^{16}01^{5}01^{2}0101^{20}0101^{16}01^{19}01^{3}01^{2}0101^{6}$\\[-1ex]
& $462$ & $1^{18}01^{17}01^{6}01^{4}01^{7}010^{2}1^{5}01^{15}01^{10}01^{14}0^{2}101^{6}0^{2}1^{2}01^{8}01^{14}0^{2}1^{7}01^{4}01^{10}01^{21}01^{10}01^{11}01^{4}0^{2}1^{9}$\\
\hline
$23$ & $505$ & $1^{11}01^{22}01^{20}0^{2}1^{9}01^{4}01^{8}0^{2}1^{6}01^{22}0^{2}1^{3}0^{2}1^{6}01^{10}01^{11}01^{2}01^{11}01^{10}01^{6}0^{2}1^{3}0^{2}1^{22}01^{6}01^{9}01^{4}01^{10}01^{10}$\\[-1ex]
& $506$ & $1^{22}0^{2}1^{18}01^{2}01^{19}01^{3}01^{2}01^{8}0^{2}1^{18}01^{9}01^{12}01^{2}0^{2}1^{9}01^{19}01^{2}0101^{6}01^{10}01^{4}01^{10}01^{19}010^{2}1^{6}01^{14}01^{10}$\\
\hline
$24$ & $567$ & $01^{18}01^{8}0^{2}1^{15}01^{8}01^{21}0^{2}1^{8}01^{16}010^{2}1^{6}01^{2}01^{14}01^{10}01^{20}01^{10}01^{14}01^{2}01^{6}0^{2}101^{16}01^{8}0^{2}1^{21}01^{8}0^{2}$\\[-2ex]
&& $1^{14}0^{2}1^{2}01^{3}$\\[-1ex]
& $592$ & $1^{21}01^{18}01^{16}01^{4}01^{7}01^{6}0101^{14}01^{3}0^{2}1^{8}01^{7}01^{3}01^{2}01^{20}0^{2}1^{3}01^{7}01^{15}01^{7}01^{3}0^{2}1^{20}01^{2}01^{3}01^{7}01^{9}01^{18}$\\[-2ex]
&& $0101^{6}01^{21}01^{7}01^{5}$\\
\hline
$25$ & $585$ & $1^{19}01^{9}01^{18}0^{2}1^{21}01^{15}0^{2}101^{10}01^{14}01^{13}01^{5}01^{9}010^{2}1^{8}01^{6}01^{8}0^{2}101^{9}01^{5}01^{13}01^{14}01^{10}010^{2}1^{15}01^{21}$\\[-2ex]
&& $0^{2}1^{4}01^{9}01^{2}$\\[-1ex]
& $ 606$ & $1^{24}01^{4}01^{10}01^{17}01^{8}01^{4}01^{23}0101^{12}01^{5}01^{12}01^{9}0^{2}1^{3}01^{8}01^{6}01^{21}01^{6}01^{8}01^{3}0^{2}1^{9}01^{12}01^{5}01^{12}0101^{23}$\\[-2ex]
&& $01^{4}01^{7}0^{2}1^{3}01^{10}01$\\
\hline
$26$ & $\geq 633$ & $1^{3}01^{13}01^{4}01^{12}01^{20}01^{21}01^{6}01^{18}01^{7}01^{6}01^{7}0^{2}1^{12}01^{14}0^{2}1^{4}01^{2}01^{14}01^{25}01^{2}01^{4}0^{2}1^{14}01^{12}0^{2}$\\[-2ex]
&& $1^{14}01^{4}01^{2}01^{13}01^{4}01^{9}01^{18}01$\\[-1ex]
& $\geq 642$ & $1^{15}01^{23}01^{6}01^{13}01^{4}01^{14}01^{2}01^{13}01^{11}01^{15}0^{2}1^{6}01^{12}01^{5}01^{7}01^{14}0^{2}1^{11}0^{2}101^{13}01^{4}01^{17}01^{4}01^{13}01^{6}$\\[-2ex]
&& $01^{7}01^{8}01^{4}01^{6}01^{13}01^{17}01^{5}$\\
\hline
$27$ & $\geq 663$ & $1^{18}01^{18}01^{22}01^{5}01^{13}0^{2}1^{21}01^{4}01^{9}0^{2}1^{2}01^{8}01^{2}01^{3}01^{8}0^{2}1^{12}01^{18}01^{9}01^{14}01^{5}01^{9}01^{18}01^{12}0^{2}$\\[-2ex]
&& $1^{12}01^{14}0^{2}1^{9}01^{2}01^{25}01^{4}01^{4}$\\[-1ex]
& $\geq 698$ & $1^{22}01^{6}01^{25}01^{10}01^{25}01^{11}0^{2}1^{9}0101^{5}01^{3}01^{7}0^{2}1^{9}01^{14}01^{23}01^{3}0^{2}1^{19}01^{17}0^{2}1^{3}01^{5}01^{12}01^{11}01^{7}0$\\[-2ex]
&& $1^{9}0^{2}1^{7}01^{21}01^{12}01^{13}01^{6}01$\\
\hline
$28$ & $\geq 727$ & $1^{26}01^{5}01^{13}01^{8}01^{19}01^{13}01^{10}01^{19}01^{12}01^{15}01^{7}0^{2}1^{12}0101^{12}0^{2}1^{4}01^{9}01^{2}01^{15}01^{22}01^{15}01^{2}01^{9}0$\\[-2ex]
&& $1^{5}01^{12}0101^{13}01^{14}01^{8}01^{12}01^{4}01^{10}01^{2}$\\[-1ex]
& $\geq 742$ & $1^{21}01^{22}0101^{4}01^{25}01^{10}0101^{23}01^{11}0^{2}1^{9}01^{7}01^{3}01^{7}0^{2}1^{9}01^{14}01^{27}0^{2}1^{19}01^{17}0^{2}1^{3}01^{5}01^{12}01^{11}01^{7}$\\[-2ex]
&& $01^{9}0^{2}1^{7}01^{21}01^{12}01^{13}01^{6}01$\\
\hline
$29$ & $\geq 809$ & $1^{22}01^{20}01^{28}01^{13}01^{3}01^{6}01^{4}01^{16}01^{22}01^{16}01^{11}01^{12}0^{2}1^{2}01^{7}01^{5}01^{10}01^{11}01^{3}01^{5}01^{22}0101^{3}01^{24}0$\\[-2ex]
&& $1^{11}01^{3}01^{6}01^{9}01^{11}01^{12}01^{11}01^{2}01^{24}0^{2}1^{3}01^{12}$\\[-1ex]
& $\geq 820$ & $1^{13}010^{2}1^{16}01^{15}01^{14}0101^{4}01^{2}01^{27}01^{4}0^{2}1^{14}01^{12}01^{16}01^{6}01^{7}01^{10}01^{7}01^{6}01^{16}01^{12}01^{20}01^{9}01^{4}01^{21}0$\\[-2ex]
&& $1^{15}01^{7}01^{6}01^{13}01^{6}0^{2}1^{11}01^{20}01^{21}0101^{5}01^{6}01^{4}$\\
\hline
$30$ & $\geq 843$ & $1^{29}01^{25}0^{2}1^{17}01^{9}01^{5}0^{2}1^{22}01^{7}01^{10}01^{9}01^{4}01^{9}0^{2}1^{17}01^{13}01^{8}01^{28}01^{5}01^{10}0101^{10}01^{5}01^{8}01^{19}0$\\[-2ex]
&& $1^{12}01^{24}01^{13}01^{4}01^{10}01^{2}01^{15}01^{9}010^{2}1^{15}01^{11}$\\[-1ex]
& $\geq 854$ & $1^{29}0101^{2}0101^{7}01^{4}01^{16}01^{18}01^{14}01^{21}0^{2}1^{7}01^{3}01^{2}01^{13}01^{14}01^{4}01^{16}01^{3}01^{11}01^{24}01^{4}01^{10}01^{7}01^{25}01^{10}$\\[-2ex]
&& $0^{2}1^{4}01^{18}0^{2}1^{15}01^{11}01^{5}0101^{4}01^{3}01^{21}01^{18}01^{9}01^{13}$\\
\hline
$31$ & $\geq 915$ & $1^{12}01^{3}01^{4}01^{7}01^{11}01^{8}01^{16}01^{10}01^{4}01^{3}01^{4}01^{19}01^{29}01^{6}01^{26}01^{3}0101^{24}01^{8}01^{16}010^{2}1^{25}01^{3}01^{8}0$\\[-2ex]
&& $1^{2}01^{16}01^{2}0101^{3}01^{24}01^{7}01^{3}01^{24}01^{2}0101^{28}01^{20}01^{11}0101^{16}01^{5}$\\[-1ex]
& $\geq 930$ & $1^{12}01^{19}01^{16}01^{10}01^{9}01^{3}01^{4}01^{14}01^{12}01^{14}01^{18}0^{2}1^{10}01^{4}01^{13}01^{8}01^{5}0101^{10}01^{17}01^{15}01^{16}01^{8}01^{5}01^{8}$\\[-2ex]
&& $0101^{4}01^{8}01^{19}01^{16}01^{5}01^{18}01^{4}01^{25}01^{14}01^{16}01^{3}01^{9}01^{6}01^{13}01^{11}01$\\
\hline
$32$ & $\geq 957$ & $1^{24}01^{3}01^{7}01^{19}01^{6}01^{4}01^{17}01^{30}0^{2}1^{6}01^{5}01^{24}0^{2}1^{11}01^{18}01^{4}01^{17}01^{19}01^{10}01^{11}01^{7}01^{17}0^{2}1^{3}0$\\[-2ex]
&& $1^{23}01^{12}01^{11}01^{7}0^{2}1^{3}01^{25}01^{4}01^{5}01^{7}01^{22}01^{7}01^{12}01^{30}01^{2}01^{6}01$\\[-1ex]
& $\geq 962$ & $1^{11}0^{2}1^{18}01^{21}01^{5}01^{2}01^{17}01^{19}0101^{16}01^{13}01^{3}01^{24}01^{5}01^{2}01^{23}01^{12}01^{23}0^{2}1^{5}01^{24}0101^{5}0^{2}101^{9}01^{17}$\\[-2ex]
&& $01^{15}01^{6}01^{19}01^{9}01^{24}01^{5}01^{2}01^{5}0^{2}1^{23}01^{27}01^{8}01^{19}01^{2}$\\
\hline
$33$ & $\geq 995$ & $1^{31}0101^{20}01^{7}0^{2}1^{12}01^{4}01^{30}01^{7}01^{9}01^{8}01^{11}01^{3}01^{13}01^{18}01^{12}01^{11}01^{3}0101^{10}0^{2}1^{4}01^{7}01^{21}01^{14}01^{28}$\\[-2ex]
&& $0101^{11}01^{3}0^{2}1^{17}01^{30}01^{6}0101^{28}01^{32}01^{2}01^{8}01^{5}01^{3}01^{8}01^{3}01^{13}$\\[-1ex]
& $\geq 1004$ & $1^{2}01^{6}01^{22}01^{4}01^{32}01^{2}01^{9}0^{2}101^{7}01^{5}01^{23}01^{18}01^{30}010^{2}1^{16}01^{13}01^{9}01^{6}01^{9}01^{19}01^{11}01^{4}01^{17}01^{10}0$\\[-2ex]
&& $1^{26}0^{2}1^{18}01^{10}01^{23}01^{12}01^{15}01^{9}01^{12}01^{6}0^{2}1^{8}0^{2}1^{3}01^{31}01^{12}$\\
\hline
$34$ & $\geq 1053$ & $1^{9}01^{14}01^{21}0101^{4}01^{27}01^{8}01^{24}01^{32}01^{3}01^{4}01^{16}01^{13}0101^{2}01^{33}01^{3}01^{4}01^{3}01^{16}0101^{13}01^{3}01^{16}01^{6}$\\[-2ex]
&& $01^{33}0^{2}1^{9}01^{4}01^{31}01^{2}0101^{16}01^{4}01^{31}01^{3}01^{2}01^{7}0^{2}1^{22}0101^{8}01^{30}01^{3}$\\[-1ex]
& $\geq 1080$ & $1^{23}0^{2}1^{9}01^{22}01^{13}01^{25}0^{2}1^{21}01^{8}01^{15}01^{11}01^{18}0^{2}1^{4}01^{20}0101^{2}01^{5}01^{21}01^{8}01^{4}01^{23}01^{4}01^{8}01^{21}$\\[-2ex]
&& $01^{10}01^{20}01^{4}0^{2}1^{2}01^{27}01^{10}01^{4}01^{2}01^{26}0101^{25}01^{8}01^{9}01^{17}01^{4}0^{2}1^{12}01^{21}0^{2}1^{3}01^{3}$\\
\hline
$35$ & $\geq 1113$ & $1^{7}01^{12}01^{21}01^{28}01^{3}01^{16}0101^{23}01^{4}0101^{10}01^{8}01^{14}01^{28}01^{9}01^{12}01^{21}01^{23}0^{2}101^{5}01^{3}01^{21}01^{2}01^{23}0$\\[-2ex]
&& $1^{5}01^{14}01^{15}01^{3}010^{2}101^{5}01^{17}01^{10}01^{23}0101^{3}01^{2}01^{9}01^{21}01^{14}01^{30}01^{25}01^{10}01^{7}$\\[-1ex]
& $\geq 1154$ & $1^{28}01^{8}01^{18}01^{6}01^{9}01^{29}01^{6}01^{9}01^{6}01^{12}01^{5}0101^{10}01^{25}01^{28}01^{13}01^{8}01^{9}01^{3}01^{25}01^{20}01^{3}01^{21}01^{5}0$\\[-2ex]
&& $1^{10}01^{2}01^{34}0101^{3}01^{2}01^{15}01^{17}01^{2}01^{27}01^{19}01^{8}0^{2}1^{6}0101^{2}0^{2}1^{19}01^{30}01^{21}01^{7}$\\
\hline
$36$ & $\geq 1185$ & $1^{6}01^{28}01^{10}01^{16}01^{28}01^{35}01^{4}01^{7}01^{23}0101^{2}01^{8}01^{2}01^{3}01^{5}01^{15}01^{2}0101^{19}01^{30}01^{25}01^{12}01^{3}01^{14}0$\\[-2ex]
&& $1^{8}01^{11}01^{7}01^{4}01^{18}0101^{7}01^{3}01^{2}01^{4}01^{6}01^{21}01^{3}01^{28}01^{24}01^{23}01^{3}0^{2}101^{32}01^{5}01^{11}01^{10}01^{15}$\\[-1ex]
& $\geq 1212$ & $1^{3}01^{25}01^{13}01^{24}01^{5}01^{4}01^{17}01^{13}01^{10}01^{8}01^{28}010^{2}1^{32}01^{10}01^{13}01^{20}0101^{4}01^{12}0101^{13}01^{20}0101^{8}01^{4}01^{32}$\\[-2ex]
&& $0101^{28}01^{4}01^{19}01^{32}01^{6}01^{18}01^{2}01^{6}0^{2}1^{3}01^{24}01^{6}01^{20}01^{13}01^{14}01^{7}01^{23}01^{4}01^{8}$\\
\hline
$37$ & $\geq 1271$ & $1^{30}01^{6}01^{15}01^{13}01^{23}010^{2}1^{15}0^{2}1^{28}01^{7}01^{28}01^{2}01^{3}01^{13}01^{22}0^{2}1^{12}01^{3}010^{2}1^{36}01^{28}01^{16}0^{2}101^{16}$\\[-2ex]
&& $01^{23}01^{13}01^{8}01^{3}01^{9}01^{12}01^{7}01^{29}01^{16}01^{5}0101^{15}010^{2}1^{33}01^{30}0^{2}1^{6}01^{12}01^{17}01^{5}01^{9}01^{2}01^{4}01^{6}$\\[-1ex]
& $\geq 1294$ & $1^{7}01^{24}01^{31}01^{6}01^{3}01^{30}0101^{3}01^{4}01^{22}01^{34}0101^{8}01^{32}01^{8}01^{22}01^{2}01^{7}01^{2}01^{3}01^{23}01^{6}01^{22}01^{10}0101^{4}$\\[-2ex]
&& $01^{9}01^{19}01^{4}01^{3}01^{7}01^{15}01^{29}01^{6}01^{3}0101^{2}01^{10}01^{22}01^{28}01^{12}01^{8}01^{16}01^{8}0101^{8}01^{22}01^{23}01^{4}01^{12}01^{9}$\\
\hline
$38$ & $\geq 1335$ & $1^{10}01^{35}01^{12}01^{8}01^{12}01^{7}0101^{33}01^{14}0101^{5}01^{13}01^{2}01^{7}01^{22}01^{34}0^{2}1^{13}01^{7}01^{26}0101^{4}01^{37}01^{13}0101^{4}$\\[-2ex]
&& $01^{36}0^{2}101^{27}01^{13}0101^{7}01^{18}01^{9}0^{2}1^{6}01^{29}01^{18}0^{2}1^{6}01^{28}0101^{21}0^{2}1^{19}01^{9}01^{36}01^{6}01^{6}$\\[-1ex]
& $\geq 1368$ & $1^{11}01^{7}01^{36}01^{32}0^{2}1^{3}0^{2}1^{8}01^{20}0101^{11}01^{33}01^{4}01^{16}01^{8}01^{28}01^{7}01^{4}01^{26}01^{10}01^{22}01^{4}01^{8}01^{4}01^{11}01^{5}$\\[-2ex]
&& $01^{4}01^{11}01^{13}01^{24}01^{17}01^{13}01^{4}01^{22}01^{7}01^{17}01^{8}01^{9}0^{2}1^{4}01^{27}01^{20}01^{18}01^{22}01^{3}01^{9}0^{2}1^{25}01^{10}01^{27}01$\\
\hline
$39$ & $\geq 1405$ & $101^{16}01^{5}01^{6}01^{2}01^{6}01^{33}01^{4}01^{10}0^{2}1^{28}01^{6}01^{4}01^{29}01^{18}01^{6}01^{10}0101^{36}0^{2}1^{22}01^{33}0101^{11}01^{2}0101^{11}01^{22}$\\[-2ex]
&& $01^{33}01^{4}01^{11}01^{21}0^{2}101^{13}01^{3}01^{32}01^{19}01^{13}01^{8}01^{11}01^{24}0^{2}1^{15}01^{36}01^{13}01^{6}01^{26}01^{2}01^{3}010^{2}1^{15}01^{11}01^{4}$\\[-1ex]
& $\geq 1410$ & $1^{3}01^{34}01^{24}01^{3}01^{32}01^{4}01^{18}0^{2}1^{35}01^{12}01^{7}01^{16}01^{11}01^{6}01^{9}01^{20}01^{6}01^{9}01^{19}01^{5}01^{10}0101^{23}01^{16}0^{2}1^{31}$\\[-2ex]
&& $01^{3}01^{12}01^{7}01^{9}01^{6}01^{28}01^{6}01^{17}010^{2}1^{9}01^{18}01^{6}0^{2}1^{8}01^{2}01^{17}01^{3}01^{32}01^{5}01^{12}01^{34}0101^{24}01^{15}01^{13}01^{3}01^{7}$\\
\hline
\hline
\end{longtable}
}

\section{Using the \OKlibrary}
\label{sec:OKl}

The \OKlibrary, available at \OKinternet, is an open-source research and development platform for SAT-solving and related areas (attacking hard problems); see \cite{Kullmann2009OKlibrary} for some general information. For the purpose of reproduction of all results, one can use the Git ID ``4cea9abf851424ca56f2ad0e4b8be2d707b041c2'' (package 00147).\footnote{Via the Git ID one can identify the versions of programs used in the article. The package provides the sources and a build system. Since building depends on the environment (the operating system to start with), there can not be a guarantee for the build to succeed, but perhaps later (or earlier) packages need to be used.} For the purposes of this article the following components are directly relevant:
\begin{itemize}
\item The \OKlibrary{} provides an already rather extensive library of functions for the computer algebra system \texttt{Maxima}\footnote{\url{http://maxima.sourceforge.net/}}. For example all hypergraph generators discussed in this article, and all vdW- and palindromic vdW-numbers can be computed and investigated at this level.
\item For computations which take more time, C++ implementations are available.
\item The \OKlibrary{} provides easy access to (original) SAT solvers and related tools (as ``external sources'').\footnote{The aim is to serve as a comprehensive collection, also maintaining ``historical'' versions.}
\item Finally these components are integrated into tools for running and evaluating experiments.\footnote{In general we use the \texttt{R} system for statistical evaluation.}
\end{itemize}
In the following sections we demonstrate the use of these tools. Some general technical remarks:
\begin{enumerate}
\item The installed \OKlibrary{} lives inside directory \texttt{OKplatform}.
\item Inside this directory the \texttt{Maxima}-installation is called via \texttt{oklib --maxima} on the (Linux) command-line.
\item The C++ programs as well as the external sources, here the various SAT solvers, are placed on the path of the (Linux) user, and are thus callable by their name on the command-line (anywhere).
\end{enumerate}

\subsection{Numbers and certificates}
\label{sec:OKlNumbers}

All known vdW-numbers and palindromic vdW-numbers and known bounds are available at the computer-algebra level in the \OKlibrary{} (using \texttt{Maxima}). For example the (known) numbers $\vdw(2;3,t)$ and $\vdwpd(2;3,t)$ are printed as follows (where inside the \OKlibrary{} we typically use the letter ``k'' for the length of an arithmetical progression, not ``t'' as in this article):
\begin{verbatim}
OKplatform> oklib --maxima
(%i1) oklib_load_all();
(%i2) output(N) := block([L],
  print(" k  vdw          pdvdw                       span         gap"),
  for k : 3 thru N do (L : [3,k],
    printf(true, "~2,d  ~12,a ~27,a ~12,a ~12,a~%",
      k, vanderwaerden(L), pdvanderwaerden(L), pd_span(L), pd_gap(L))))$
(%i3) output(40);
 k  vdw          pdvdw                       span         gap
 3  9            [6,9]                       3            0
 4  18           [15,16]                     1            2
 5  22           [16,21]                     5            1
 6  32           [30,31]                     1            1
 7  46           [41,44]                     3            2
 8  58           [52,57]                     5            1
 9  77           [62,77]                     15           0
10  97           [93,94]                     1            3
11  114          [110,113]                   3            1
12  135          [126,135]                   9            0
13  160          [142,155]                   13           5
14  186          [174,183]                   9            3
15  218          [200,205]                   5            13
16  238          [232,237]                   5            1
17  279          [256,279]                   23           0
18  312          [299,312]                   13           0
19  349          [338,347]                   9            2
20  [389,inf-1]  [380,389]                   9            [0,inf-390]
21  [416,inf-1]  [400,405]                   5            [11,inf-406]
22  [464,inf-1]  [444,463]                   19           [1,inf-464]
23  [516,inf-1]  [506,507]                   1            [9,inf-508]
24  [593,inf-1]  [568,593]                   25           [0,inf-594]
25  [656,inf-1]  [586,607]                   21           [49,inf-608]
26  [727,inf-1]  [634,643]                   9            [84,inf-644]
27  [770,inf-1]  [664,699]                   35           [71,inf-700]
28  [827,inf-1]  [[728,inf-1],[743,inf-1]]   [15,0]       [84,0]
29  [868,inf-1]  [[810,inf-1],[821,inf-1]]   [11,0]       [47,0]
30  [903,inf-1]  [[844,inf-1],[855,inf-1]]   [11,0]       [48,0]
31  [931,inf-1]  [[916,inf-1],[931,inf-1]]   [15,0]       [0,0]
32  [1007,inf-1] [[958,inf-1],[963,inf-1]]   [5,0]        [44,0]
33  [1064,inf-1] [[996,inf-1],[1005,inf-1]]  [9,0]        [59,0]
34  [1144,inf-1] [[1054,inf-1],[1081,inf-1]] [27,0]       [63,0]
35  [1205,inf-1] [[1114,inf-1],[1155,inf-1]] [41,0]       [50,0]
36  [1258,inf-1] [[1186,inf-1],[1213,inf-1]] [27,0]       [45,0]
37  [1339,inf-1] [[1272,inf-1],[1295,inf-1]] [23,0]       [44,0]
38  [1379,inf-1] [[1336,inf-1],[1369,inf-1]] [33,0]       [10,0]
39  [1419,inf-1] [[1406,inf-1],[1411,inf-1]] [5,0]        [8,0]
40  unknown      unknown                     unknown      unknown
\end{verbatim}
As one can see, if only bounds are known instead of a precise number $n$ resp. number-pair $(p,q)$, then the numbers $x \in \set{n,p,q}$ are replaced by pairs $(a,b)$ with $a \leq x \leq b$. Here $b = \mathtt{inf} - 1$ indicates that the number is finite, but no more precise upper bounds are known.\footnote{In principle there exist theoretical upper bounds, but for practical purposes these bounds are completely useless.} So for example we only know currently that $\vdw(2;3,20) \geq 389$, and this is shown by the interval $[389, \mathtt{inf}-1]$. Span and gap are simply computed according to definition, where in \texttt{maxima} the symbol \texttt{inf} is treated here like an unknown. That implies $\mathtt{inf} - \mathtt{inf} = 0$, and thus for palindromic span and gap the ``0'' in the second position indicate that the numbers in the first positions could go up or down. For example $\vdw(2;3,20) \geq 389$ and $\vdwpd(2;3,20) = (380,389)$, whence nothing can be said about $\pdg(2;3,20) = \vdw(2;3,20) - \vdwpd(2;3,20)_2$ except the trivialities that it is at least $0$ and less than infinity; the latter becomes $(\mathtt{inf} - 1) - 389 =  \mathtt{inf} - 390$.

Also the certificates (good partitions) are available, in various representations. First the certificate for $\vdw(2;3,20) \ge 389$ (see Subsection \ref{sec:Certificates3tp}), for which we check that it is in fact a palindromic certificate:
\begin{verbatim}
(%i4) full_certificate_string_vdw_3k(20);
(%o4) ["1^{19}01^{11}01^{4}01^{7}0101^{4}01^{13}01^{9}01^{4}0101^{14}
  0^{2}1^{3}0^{2}101^{9}01^{18}01^{9}01^{4}0101^{5}01^{3}01^{10}01^{16}
  01^{8}01^{16}01^{10}01^{3}01^{5}0101^{4}01^{9}01^{18}01^{9}010^{2}1^{3}
  0^{2}1^{14}0101^{4}01^{9}01^{13}01^{4}0101^{7}01^{4}01^{11}01^{19}"]
(%i5) certificate_pdvdw_p([3,20],388,full_certificate_vdw_3k(20)[1]);
(%o5) true
\end{verbatim}
And here certificates for palindromic number-pairs:
\begin{verbatim}
(%i6) cfull_certificate_string_pdvdw_3k(34);
(%o6) [["1^{9}01^{14}01^{21}0101^{4}01^{27}01^{8}01^{24}01^{32}01^{3}0
  1^{4}01^{16}01^{13}0101^{2}01^{33}01^{3}01^{4}01^{3}01^{16}0101^{13}
  01^{3}01^{16}01^{6}01^{33}0^{2}1^{9}01^{4}01^{31}01^{2}0101^{16}01^{4}
  01^{31}01^{3}01^{2}01^{7}0^{2}1^{22}0101^{8}01^{30}01^{3}"],
       ["1^{23}0^{2}1^{9}01^{22}01^{13}01^{25}0^{2}1^{21}01^{8}01^{15}0
  1^{11}01^{18}0^{2}1^{4}01^{20}0101^{2}01^{5}01^{21}01^{8}01^{4}01^{23}
  01^{4}01^{8}01^{21}01^{10}01^{20}01^{4}0^{2}1^{2}01^{27}01^{10}01^{4}0
  1^{2}01^{26}0101^{25}01^{8}01^{9}01^{17}01^{4}0^{2}1^{12}01^{21}0^{2}
  1^{3}01^{3}"]]
(%i7) extract_data_certificates_pdvdw_3k(34);
(%o7) [[3,34],1054,1081,
        [[10,25,47,49,54,82,91,116,149,153,158,175,189,191,194,228,232,
          237,241,258,260,274,278,295,302,336,337,347,352,384,387,389,
          406,411,443,447,450,458,459,482,484,493,524]],
        [[24,25,35,58,72,98,99,121,130,146,158,177,178,183,204,206,209,
          215,237,246,251,275,280,289,311,322,343,348,349,352,380,391,
          396,399,426,428,454,463,473,491,496,497,510,532,533,537]]]
\end{verbatim}
With the first command we get the representation of the good partitions as used in this paper (where now for the palindromic situation we have two good partition according to Theorem \ref{thm:certificatespd}), while the second command yields a list with five elements: first the parameter tuple, then the two components of the palindromic number-pair, and then two lists with the good partitions available, now represented via the block in the partition for the second colour.

Analysing the patterns according to Section \ref{sec-pat}, and applying these measurements to the certificates stored in the \OKlibrary{} for $20 \leq t \leq 39$ is done as follows:
\begin{verbatim}
(%i8) for k : 20 thru 39 do
  print(k,firste(vanderwaerden3k(k)),
        map(analyse_certificate,full_certificate_vdw_3k(k)));
20 389 [[[48,340],[44,45],[4,37],[5,27],[20,1]]]
21 416 [[[50,365],[43,44],[7,34],[13,26],[8,1]]]
22 464 [[[54,409],[51,52],[3,47],[5,40],[27,1]]]
23 516 [[[59,456],[53,54],[6,45],[12,36],[17,1]]]
24 593 [[[63,529],[57,58],[6,54],[13,37],[20,1]]]
25 656 [[[74,581],[69,70],[5,64],[11,45],[16,2]]]
26 727 [[[78,648],[72,72],[6,64],[13,42],[21,1]]]
27 770 [[[79,690],[72,73],[7,65],[15,58],[11,2]]]
28 827 [[[79,747],[74,75],[5,64],[11,44],[19,1]]]
29 868 [[[81,786],[76,77],[5,69],[11,57],[27,1]]]
30 903 [[[83,819],[76,77],[7,67],[13,57],[15,1]]]
31 931 [[[82,848],[80,81],[2,77],[5,53],[58,1]]]
32 1007 [[[87,919],[82,83],[5,78],[9,62],[29,1]]]
33 1064 [[[89,974],[85,86],[4,80],[9,58],[25,1]]]
34 1144 [[[96,1047],[87,88],[9,80],[19,63],[23,2]]]
35 1205 [[[95,1109],[91,92],[4,84],[9,67],[41,1]]]
36 1258 [[[101,1156],[97,98],[4,88],[9,72],[42,1]]]
37 1339 [[[105,1233],[97,98],[8,90],[17,65],[30,2]]]
38 1379 [[[104,1274],[96,97],[8,91],[17,73],[26,1]]]
39 1419 [[[105,1313],[98,99],[7,95],[13,72],[46,1]]]
\end{verbatim}
Per line we print out three items: $t$, the lower bound on $\vdw(2;3,t)$ and the list of data for each stored certificate. Now currently we have only stored one certificate for each $20 \leq t \leq 39$, and thus the third item contains just one list, with five pairs for the different statistics.\footnote{We found more than one solution in each case, but always very similar to the one stored; there seems to be a clustering of solutions, and perhaps there is always only one (or very few) cluster.} These five pairs have the following meaning:
\begin{enumerate}
\item First come $n_0$ and $n_1$.
\item Then come the numbers of terms $0^s$ and $1^s$ (we don't use ``$00$'' here, and so these terms alternate, and thus their numbers differ at most by one).
\item Then from these counts the cases with $s=1$ are excluded; thus the first element of the pair is $n_{00}$.
\item Now these exponents $s$ are put in the list, and the sums of the numbers of peaks and valleys are computed; again for block $0$ and block $1$ of the partition, and thus now the second element of the pair is $\np + \nv$.
\item Finally for these lists of exponents the maximal size of an interval with constant values is computed; thus if there were a second element of the pair with value $3$ or greater, then Question \ref{ques-5} would have been answered in the positive.
\end{enumerate}

The value of $d$ from Subsection \ref{sec:conjupb} is computed as follows:
\begin{verbatim}
(%i9) lmax(Delta_l(map(firste,create_list(vanderwaerden([3,k]),k,3,39)))
           /create_list(k,k,3,38));
(%o9) 77/23
(%i10) round_fdd(77/23/2,3);
(%o10) 1.674
\end{verbatim}

\subsection{Hypergraphs}
\label{sec:OKlHypergraphs}

The hypergraphs are available at Maxima-level, and the computationally expensive palindromic hypergraph also at C++ level:
\begin{verbatim}
(%i11) arithprog_hg(3,5);
(%o11) [{1,2,3,4,5},{{1,2,3},{1,3,5},{2,3,4},{3,4,5}}]
(%i12) arithprog_pd_hg(3,5);
(%o12) [{1,2,3},{{1,3},{2,3}}]

> PdVanderWaerden-O3-DNDEBUG 3 5
c Palindromised hypergraph with arithmetic-progression length 3
 and 5 vertices.
p cnf 3 2
1 3 0
2 3 0
\end{verbatim}

\subsection{SAT instances}
\label{sec:OKlSATinstances}

The SAT-instance for considering $\vdw(2;3,t)$ with $n$ vertices is created by the program call
\begin{center}
  \texttt{VanderWaerdenCNF-O3-DNDEBUG 3 t n},
\end{center}
for example for $t = 4$ and $n=6$
\begin{verbatim}
> VanderWaerdenCNF-O3-DNDEBUG 3 4 6
> cat VanDerWaerden_2-3-4_6.cnf
c Van der Waerden numbers with partitioning into 2 parts;
 SAT generator written by Oliver Kullmann, Swansea, May 2004, October 2010.
c Arithmetical progression sizes k1 = 3, k2 = 4.
c Number of elements n = 6.
c Iterating through the arithmetic progressions in colexicographical order.
p cnf 6 9
1 2 3 0
2 3 4 0
1 3 5 0
3 4 5 0
2 4 6 0
4 5 6 0
-1 -2 -3 -4 0
-2 -3 -4 -5 0
-3 -4 -5 -6 0
\end{verbatim}

The SAT-instance for considering $\vdwpd(2;3,t)$ with $n$ vertices is created by the program call
\begin{center}
  \texttt{PdVanderWaerdenCNF-O3-DNDEBUG 3 t n},
\end{center}
for example for $t = 4$ and $n=9$
\begin{verbatim}
> PdVanderWaerdenCNF-O3-DNDEBUG 3 4 9
> cat VanDerWaerden_pd_2-3-4_9.cnf
c Palindromic van der Waerden problem: 2 parts, arithmetic progressions of
 size 3 and 4, and 9 elements, yielding 5 variables.
p cnf 5 10
1 2 3 0
2 4 0
1 3 4 0
1 5 0
2 5 0
3 5 0
4 5 0
-2 -4 0
-1 -3 -5 0
-3 -4 -5 0
\end{verbatim}

\subsection{The SAT solvers}
\label{sec:OKlsolvers}

All solvers are installed via the \OKlibrary; the \oksolver{}\footnote{\url{https://github.com/OKullmann/oklibrary/tree/master/Satisfiability/Solvers/OKsolver/SAT2002}} is a solver specific to the \OKlibrary, \ntawsolver{}\footnote{\url{https://github.com/OKullmann/oklibrary/blob/master/Satisfiability/Solvers/TawSolver/tawSolver.cpp}} was developed in it (starting with version 1.0), and \satz{}\footnote{\url{https://github.com/OKullmann/oklibrary/blob/master/Satisfiability/Solvers/Satz/satz215.2.c}} as well as \march{} are maintained in the \OKlibrary{}. Example output for the column $t=12$ in Table \ref{tab:complsolvervdw}, with the instance produced by
\begin{verbatim}
> VanderWaerdenCNF-O3-DNDEBUG 3 12 135
\end{verbatim}
resp.
\begin{verbatim}
> VanderWaerdenCNF-O3-DNDEBUG 3 12 134
\end{verbatim}
for the satisfiable case, is provided in the following.

\subsubsection{\tawsolver}
\label{sec:okltaw}

First \ntawsolver{} (output with one additional line-break for the url):
\begin{verbatim}
> tawSolver -v
tawSolver:
 authors: Tanbir Ahmed and Oliver Kullmann
 url's:
  http://sourceforge.net/projects/tawsolver/
  https://github.com/OKullmann/oklibrary/blob/master/
    Satisfiability/Solvers/TawSolver/tawSolver.cpp
 Version: 2.6.6
 Last change date: 17.8.2013
 Mapping k -> weight, for clause-lengths k specified at compile-time:
   2->4.85  3->1  4->0.354  5->0.11  6->0.0694
 Divisor for open weights: 1.46
 Option summary = ""
 Macro settings:
  LIT_TYPE = std::int32_t (with 31 binary digits)
  UCP_STRATEGY = 1
 Compiled without TAU_ITERATION
 Compiled without ALL_SOLUTIONS
 Compiled without PURE_LITERALS
 Compiled with NDEBUG
 Compiled with optimisation options
 Compilation date: Aug 17 2013 21:38:43
 Compiler: g++, version 4.7.3
 Provided in the OKlibrary http://www.ok-sat-library.org
 Git ID = 237cbfc4d9b772a29e125928959af14cb4495d3e

> tawSolver VanDerWaerden_2-3-12_135.cnf
s UNSATISFIABLE
c number_of_variables                   135
c number_of_clauses                     5251
c maximal_clause_length                 12
c number_of_literal_occurrences         22611
c running_time(sec)                     10.58
c number_of_nodes                       961949
c number_of_binary_nodes                480974
c number_of_1-reductions                11312180
c reading-and-set-up_time(sec)          0.004
c file_name                             VanDerWaerden_2-3-12_135.cnf
c options                               ""
\end{verbatim}
A ``binary node'' is one with two children, i.e., where the second branch was not explored since the first branch was found satisfiable. And a ``1-reduction'' is one assignment of a literal $x$ to true due to a unit-clause $\set{x}$. Calling \ttawsolver{} happens via \texttt{ttawSolver}, and the counting version is called \texttt{ctawSolver}.
\begin{verbatim}
> ttawSolver VanDerWaerden_2-3-12_135.cnf
s UNSATISFIABLE
c number_of_variables                   135
c number_of_clauses                     5251
c maximal_clause_length                 12
c number_of_literal_occurrences         22611
c running_time(sec)                     19.29
c number_of_nodes                       953179
c number_of_binary_nodes                476589
c number_of_1-reductions                11285634
c number_of_pure_literals               1317
c reading-and-set-up_time(sec)          0.005
c file_name                             VanDerWaerden_2-3-12_135.cnf
c options                               "PT5"

> ctawSolver VanDerWaerden_2-3-12_135.cnf
s UNSATISFIABLE
c number_of_variables                   135
c number_of_clauses                     5251
c maximal_clause_length                 12
c number_of_literal_occurrences         22611
c running_time(sec)                     10.64
c number_of_nodes                       961949
c number_of_binary_nodes                480974
c number_of_1-reductions                11312180
c number_of_solutions                   0
c reading-and-set-up_time(sec)          0.005
c file_name                             VanDerWaerden_2-3-12_135.cnf
c options                               "A19"
\end{verbatim}
Options are reported via acronyms: ``P'' for pure literals, ``T'' for the tau-heuristics, followed by the number of iterations of the Newton-Raphson method, and ``A'' for all solutions, followed by the number of decimal digits for counting. Instead of just counting, we can also output all solutions, for example to standard output:
\begin{verbatim}
> ctawSolver VanDerWaerden_2-3-12_134.cnf -cout
v 1 2 3 4 5 6 7 8 9 -10 11 12 13 14 15 16 17 18 -19 20 21 22 23 24 25 26 27 28
-29 30 31 -32 33 34 35 -36 37 -38 39 40 41 42 43 44 45 -46 47 48 -49 50 -51 52
53 54 -55 56 57 58 59 60 61 62 63 64 65 66 -67 -68 69 70 71 72 73 74 75 76 77
78 79 -80 81 82 83 -84 85 -86 87 88 -89 90 91 92 93 94 95 96 -97 98 -99 100
101 102 -103 104 105 -106 107 108 109 110 111 112 113 114 115 -116 117 118 119
120 121 122 123 124 -125 126 127 128 129 130 131 132 133 134 0
s SATISFIABLE
c number_of_variables                   134
c number_of_clauses                     5172
c maximal_clause_length                 12
c number_of_literal_occurrences         22266
c running_time(sec)                     10.56
c number_of_nodes                       968509
c number_of_binary_nodes                484254
c number_of_1-reductions                11308431
c number_of_solutions                   1
c reading-and-set-up_time(sec)          0.004
c file_name                             VanDerWaerden_2-3-12_134.cnf
c options                               "A19"
\end{verbatim}
The solution is given in the DIMACS format for partial assignments, with positive literals setting the underlying variable to \texttt{true}, and negative literals setting them to \texttt{false} (so positive literals are the elements of the partition for $t=12$ here). For all options, use \texttt{tawSolver} without arguments, or see the source code. Finally we note that \ttawsolver{} and \texttt{ctawSolver} are just compilations of the \tawsolver{} with specific options set\footnote{see \url{https://github.com/OKullmann/oklibrary/blob/master/Satisfiability/Solvers/TawSolver/makefile} for the \texttt{makefile} in the \OKlibrary}, namely:
\begin{verbatim}
ttawSolver:
 -DPURE_LITERALS -DTAU_ITERATION=5

ctawSolver:
 -DALL_SOLUTIONS

cttawSolver:
 -DTAU_ITERATION=5 -DALL_SOLUTIONS
\end{verbatim}

\subsubsection{\satz}
\label{sec:oklsatz}

Now \satz:
\begin{verbatim}
> satz215 VanDerWaerden_2-3-12_135.cnf
s UNSATISFIABLE
c sat_status                            0
c number_of_variables                   135
c initial_number_of_clauses             5251
c reddiff_number_of_clauses             0
c running_time(sec)                     76.73
c number_of_nodes                       262304
c number_of_binary_nodes                133373
c number_of_pure_literals               55
c number_of_1-reductions                5482044
c number_of_2-look-ahead                30069498
c number_of_2-reductions                1196400
c number_of_3-look-ahead                563872
c number_of_3-reductions                257097
c file_name                             VanDerWaerden_2-3-12_135.cnf
\end{verbatim}
Here ``reddiff'' is the ``difference due to reduction'' in the number of clauses: clauses can be removed by subsumption (not applicable here), while clauses can be added by resolution (does not happen here).

\subsubsection{\march}
\label{sec:oklmarch}

Now \march:
\begin{verbatim}
> march_pl VanDerWaerden_2-3-12_135.cnf
c main():: nodeCount: 47963
c main():: dead ends in main: 110
c main():: lookAheadCount: 10456897
c main():: unitResolveCount: 274045
c main():: time=184.539993
c main():: necessary_assignments: 5287
c main():: bin_sat: 0, bin_unsat 0
c main():: doublelook: #: 421439, succes #: 321732
c main():: doublelook: overall 4.150 of all possible doublelooks executed
c main():: doublelook: succesrate: 76.341, average DL_trigger: 273.489
s UNSATISFIABLE
\end{verbatim}

\subsubsection{\oksolver}
\label{sec:okloks}

And to conclude the complete solvers, the \oksolver:
\begin{verbatim}
> OKsolver_2002-O3-DNDEBUG VanDerWaerden_2-3-12_135.cnf
s UNSATISFIABLE
c sat_status                            0
c initial_maximal_clause_length         12
c initial_number_of_variables           135
c initial_number_of_clauses             5251
c initial_number_of_literal_occurrences 22611
c number_of_initial_unit-eliminations   0
c reddiff_maximal_clause_length         0
c reddiff_number_of_variables           0
c reddiff_number_of_clauses             0
c reddiff_number_of_literal_occurrences 0
c number_of_2-clauses_after_reduction   0
c running_time(sec)                     215.8
c number_of_nodes                       281381
c number_of_single_nodes                0
c number_of_quasi_single_nodes          0
c number_of_2-reductions                2049274
c number_of_pure_literals               29
c number_of_autarkies                   0
c number_of_missed_single_nodes         0
c max_tree_depth                        36
c proportion_searched                   1.000000e+00
c proportion_single                     0.000000e+00
c total_proportion                      1
c number_of_table_enlargements          0
c number_of_1-autarkies                 490
c number_of_new_2-clauses               0
c maximal_number_of_added_2-clauses     0
c file_name                             VanDerWaerden_2-3-12_135.cnf
\end{verbatim}

\subsubsection{\texttt{Ubcsat}}
\label{sec:OKlubc}

If we want to run an algorithm from the \texttt{Ubcsat}-suite\footnote{see \url{http://ubcsat.dtompkins.com/}} on its own (while running it in the iterative fashion, as discussed in Subsection \ref{sec:remsatincomp}, is shown in the following \ref{sec:OKlexperiments}), for example \texttt{gsat-tabu}, then this can be done as follows (using an additional line-break in the command-line, and four additional line-breaks in the first output-line), for a cut-off $10^6$, ten runs, and initial seed $0$ (for reproducibility):
\begin{verbatim}
> ubcsat-okl -alg gsat-tabu -cutoff 1000000 -runs 10 -seed 0
 -i VanDerWaerden_2-3-12_134.cnf
# -rclean -r out stdout run,found,best,beststep,steps,seed -r stats stdout
 numclauses,numvars,numlits,fps,beststep[mean],steps[mean+max],percentsolve,
 best[min+max+mean+median] -runs 10 -cutoff 100000 -rflush
 -alg gsat-tabu -cutoff 1000000 -runs 10 -seed 0
 -i VanDerWaerden_2-3-12_134.cnf
       sat  min               osteps               msteps       seed
      1 0     1                 3588              1000000          0
      2 1     0               543154               543154 1492175541
      3 0     1                 5687              1000000  367425000
      4 0     1                 3152              1000000 3611176606
      5 0     1               164885              1000000  388711246
      6 0     1                50599              1000000 4160687068
      7 0     1                 3533              1000000  533276301
      8 0     1                94759              1000000 1146607069
      9 0     1                 2921              1000000 3903233437
     10 0     1                 8071              1000000  127100396

Clauses = 5172
Variables = 134
TotalLiterals = 22266
FlipsPerSecond = 513073
BestStep_Mean = 88034.9
Steps_Mean = 954315.4
Steps_Max = 1000000
PercentSuccess = 10.00
BestSolution_Mean = 0.9
BestSolution_Median = 1
BestSolution_Min = 0
BestSolution_Max = 1
\end{verbatim}
Here we use the wrapper-script \texttt{ubcsat-okl}\footnote{see \href{https://github.com/OKullmann/oklibrary/blob/master/Experimentation/ExperimentSystem/ControllingLocalSearch/ubcsat-okl}{link to shell script}}, which outputs the output for the runs in a style typical for statistical data (easily readable for example by the tool R\footnote{\url{http://www.r-project.org/}}, as used in the \OKlibrary):
\begin{enumerate}
\item First a comment-line, starting with ``\#'', showing the parameters passed to the \texttt{ubcsat}-program (everything until ``\texttt{-rflush}'' is the default, coming from \texttt{ubcsat-okl}, and after that come the parameters from the command-line (possibly overriding the defaults)).
\item Then a line with the headings for the six output columns (\texttt{osteps} is for the number of rounds for reaching the optimum, while \texttt{msteps} is for the maximum number of steps).
\item Followed by data for the runs (above, one of the ten runs was successful).
\item Finally summary statistics (this is not readable by tools like R, and needed to be removed; however for a quick human-readable overview it is useful).
\end{enumerate}

\subsection{Running experiments}
\label{sec:OKlexperiments}

For running \texttt{Ubcsat}-algorithm to determine lower bounds on $\vdw(2;3,t)$ and $\vdwpd(2;3,t)$, also providing ``conjectures'' on the precise values, we have the following tools (using no parameters here serves to print some basic helper-information). First the general tool for $\vdw(2;t_0,t_1)$:
\begin{verbatim}
> RunVdWk1k2
ERROR[RunVdWk1k2]: Six parameters k1, k2, n0, alg, runs, cutoff
  are needed: The progression-lengths k1,k2, the starting number n0 of
  vertices, the ubcsat-algorithm, the number of runs, and the cutoff.
  An optional seventh parameter is a path for the file containing an
  initial assignment for the first ubcsat-run.
\end{verbatim}
The special version with k1=3, handling our case $\vdw(2;3,t)$:\footnote{see \href{https://github.com/OKullmann/oklibrary/blob/master/Experimentation/Investigations/RamseyTheory/VanderWaerdenProblems/RunVdW3k}{link to shell script}}
\begin{verbatim}
> RunVdW3k
ERROR[RunVdW3k]: Five parameters k, n0, alg, runs, cutoff
  are needed: The progression-length k, the starting number n0 of vertices,
  the ubcsat-algorithm, the number of runs, and the cutoff.
  An optional sixth parameter is a path for the file containing an
  initial assignment for the first ubcsat-run.
\end{verbatim}
For example
\begin{verbatim}
> RunVdW3k 27 678 gsat-tabu 1000 10000000
\end{verbatim}
starts the investigation of $\vdw(2;3,27)$ with $n=678$ (ad-hoc, no solution given), where the cut-off value (the number of rounds for stochastic local search) is $10^6$, and $1000$ runs are executed; from $n=679$ on the first three runs will use the solution found for $n-1$, while further runs use a random initial assignment.

Handling palindromic instances is done similarly\footnote{see \href{https://github.com/OKullmann/oklibrary/blob/master/Experimentation/Investigations/RamseyTheory/VanderWaerdenProblems/RunPdVdWk1k2}{link to shell script}}:
\begin{verbatim}
> RunPdVdWk1k2
ERROR[RunPdVdWk1k2]: Five parameters k1, k2, alg, runs, cutoff
  are needed: The progression-lengths k1,k2, the ubcsat-algorithm,
  the number of runs, and the cutoff.
\end{verbatim}

And for running complete solvers on palindromic instances we have\footnote{see \href{https://github.com/OKullmann/oklibrary/blob/master/Experimentation/Investigations/RamseyTheory/VanderWaerdenProblems/CRunPdVdWk1k2}{link to shell script}}:
\begin{verbatim}
> CRunPdVdWk1k2
ERROR[CRunPdVdWk1k2]: Three parameters k1, k2, solver, are needed:
  The progression-lengths k1, k2 and the SAT solver.
\end{verbatim}

\end{appendix}

\end{document}